\patchcmd{\section}{\scshape}{\bfseries}{}{}
\renewcommand{\@secnumfont}{\bfseries}
\DeclareMathOperator{\Hom}{Hom}
\DeclareMathOperator{\Spec}{Spec}
\DeclareMathOperator{\sSpec}{Spec_s}
\DeclareMathOperator{\Frac}{Frac}
\DeclareMathOperator{\Pic}{Pic}
\DeclareRobustCommand{\coprod}{\mathop{\text{\fakecoprod}}}
\newcommand{\fakecoprod}{%
	\sbox0{$\prod$}%
	\smash{\raisebox{\dimexpr.9625\depth-\dp0}{\scalebox{1}[-1]{$\prod$}}}%
	\vphantom{$\prod$}%
}
\newcommand{\angles}[1]{\left\langle #1 \right\rangle}
\theoremstyle{definition}
\newtheorem{mydef}{\textbf{Definition}}[section]
\newtheorem{myeg}[mydef]{\textbf{Example}}
\newtheorem{rmk}[mydef]{\textbf{Remark}}
\theoremstyle{plain}
\newtheorem{mythm}[mydef]{\textbf{Theorem}}
\newtheorem*{nothma}{\textbf{Theorem A}}
\newtheorem*{nothmb}{\textbf{Theorem B}}
\newtheorem*{nothmc}{\textbf{Theorem C}}
\newtheorem*{nothmd}{\textbf{Theorem D}}
\newtheorem*{nothme}{\textbf{Theorem E}}
\newtheorem{lem}[mydef]{\textbf{Lemma}}
\newtheorem{pro}[mydef]{\textbf{Proposition}}
\newtheorem{cor}[mydef]{\textbf{Corollary}}
\newtheorem{condition}[mydef]{\textbf{Condition}}
\newcommand{\R}{\mathbb{R}}
\newcommand{\T}{\mathbb{T}}
\newcommand{\Z}{\mathbb{Z}}
\patchcmd{\abstract}{\scshape\abstractname}{\normalsize{\textbf{\abstractname}}}{}{}
\begin{document}

	\title{Vector bundles on tropical schemes}
	\author{Jaiung Jun}
	\address{Department of Mathematics, State University of New York at New Paltz, NY 12561, USA}
	\email{junj@newpaltz.edu}

	\author{Kalina Mincheva}
	\address{Department of Mathematics, Tulane University, New Orleans, LA 70118, USA}
	\email{kmincheva@tulane.edu}

	\author{Jeffrey Tolliver}
	\address{}
	\email{jeff.tolli@gmail.com}
	\makeatletter
	\@namedef{subjclassname@2020}{%
		\textup{2020} Mathematics Subject Classification}
	\makeatother
	
	\subjclass[2020]{14T10, 14C22}
	\keywords{monoid scheme, tropical geometry, vector bundle, semiring, semiring scheme, the field with one element, characteristic one, Picard group, scheme-theoretic tropicalization}
	\thanks{}
	
	\begin{abstract}
		We define vector bundles for tropical schemes, and explore their properties. The paper largely consists of three parts (1) we study free modules over zero-sum free semirings, which provide the necessary algebraic background for the theory, (2) we relate vector bundles on tropical schemes to topological vector bundles and vector bundles on monoid schemes, and finally (3) we show that all line bundles on a tropical scheme can be lifted to line bundles on a usual scheme in the affine case. 
	\end{abstract}
	
	\maketitle

	\section{Introduction}
	
	Tropical geometry is a recent area of mathematics and has had success and numerous applications including in algebraic geometry. It studies an algebraic variety $X$ over a valued field through its combinatorial counterpart, called the tropicalization $\textrm{trop}(X)$ of $X$. The combinatorial counterpart $\textrm{trop}(X)$ can be considered as a degeneration of the original variety $X$ and retains some of its invariants. Various authors have successfully answered questions of (enumerative) algebraic geometry by applying tools and ideas in tropical geometry, for instance, \cite{mikhalkin2005enumerative}, \cite{cools2012tropical}. 
	
	Algebraically, $\textrm{trop}(X)$ is described by polynomials with coefficients in an idempotent semiring, in particular, in the \emph{tropical semifield} $\mathbb{T}$ (see Example \ref{example: tropical semifield}). Geometrically, $\textrm{trop}(X)$ is a polyhedral complex with certain nice properties. The algebraic structure of the tropicalization is very degenerate so in order to prove analogues of classical algebro-geometric tools one needs to endow a tropical variety with extra structure. There are many different frameworks of scheme-theoretic foundations of tropical geometry among which blue schemes developed in \cite{lorscheid2015scheme} and the tropical ideals and tropical schemes introduced in \cite{maclagan2016tropical} and \cite{giansiracusa2016equations} respectively. The approach of J.~Giansiracusa and N.~Giansiracusa in \cite{giansiracusa2016equations} where  elegantly combines ideas from tropical geometry, and another emerging sub-field in algebraic geometry, namely, \emph{algebraic geometry over $\mathbb{F}_1$}. 
	
	The incarnation of algebraic geometry over $\mathbb{F}_1$ goes back to J.~Tits \cite{tits1956analogues}, where Tits found that the incidence geometry $\Gamma({\mathbb{F}_q})$ associated to a Chevalley group $G(\mathbb{F}_q)$ does not completely degenerate while the algebraic structure of $\mathbb{F}_q$ does as $q \to 1$. Tits suggested that the geometric structure $\lim_{q\to 1}\Gamma(\mathbb{F}_q)$ should be understood as a ``an incidence geometry defined over $\mathbb{F}_1$, the field with one element''. 
	
	Another motivation for algebraic geometry over $\mathbb{F}_1$ arises from a geometric approach towards the Riemann hypothesis, first considered by Y.~Manin in \cite{manin1995lectures}; one hopes to translate the geometric proof of the Weil conjectures to the case of number fields. One important step in this approach is
	interpreting $\Spec \mathbb{Z}$ as a curve over $\mathbb{F}_1$. As this set up is beyond the realm of classical scheme theory, one needs to enlarge the category of schemes. 
	Since its first appearance, there have been largely two approaches towards algebraic geometry over $\mathbb{F}_1$; (1) forgetting completely the additive structure (monoid schemes \cite{soule2004varietes}, \cite{deitmar2008f1},\cite{con2} for instance), and (2) maintain, but weaken the additive structure (semiring schemes \cite{oliver1}, \cite{oliver2}, \cite{giansiracusa2016equations} or more generally semiring-valued sheaves on topoi \cite{connes2014arithmetic}, \cite{connes2017geometry}). 
 
 The last 5 papers have a strong relation to previously developed $\mathbb{F}_1$-geometry. They define a rich geometric structure (scheme, topos) in the semiring setting, often arising as a scalar extension of $\mathbb{F}_1$ (monoid) schemes. There is also an $\mathbb{F}_1$ approach with analytic flavor, developed in \cite{Bambozzi}.

	Tropical schemes are a special class of semiring schemes. To a scheme $X$ embedded in a toric variety defined over a valued field, one can associate a tropical scheme $\textrm{Trop}(X)$, called the scheme-theoretic tropicalization of $X$ whose set of $\mathbb{T}$-rational points is the (set-theoretic) tropicalization $\textrm{trop}(X)$ of $X$.

	In \cite{jun2019picard}, we explored several properties of Picard groups for tropical toric schemes. Our main motivation was to take the first step toward formulating a scheme-theoretic tropical Riemann-Roch theorem. In particular, we showed that the Picard group of a monoid scheme $X$ is stable under the scalar extension to the tropical semifield $\mathbb{T}$ under certain technical conditions.
	
	In this paper, we initiate the study of vector bundles on semiring schemes (and hence tropical schemes). Properties of vector bundles on semiring schemes are rather subtle and sometimes counter-intuitive as they reflect the subtleties of linear algebra over idempotent semirings. For instance, any free module over the \emph{Boolean semifield} $\mathbb{B}$ (Example \ref{example: boolean}) has a unique basis (Example \ref{example: basis over B}). In fact, we prove that ``few bases exist'' over idempotent semirings; see Proposition \ref{proposition: unique basis} for the precise statement. This is a big difference with the classical theory of free modules.

	In \cite{pirashvili2015cohomology}, I.~Pirashvili provides a cohomological description of the vector bundles on a monoid scheme, and proves that for a connected separated monoid scheme $X$, any vector bundle is a coproduct of line bundles. In \S\ref{section: vector bundles for tropical toric schemes}, we first prove that indeed a similar result holds for semiring schemes which satisfy a certain local condition:

	\begin{nothma}(Theorem \ref{theorem: vector bundle decomposition})
		Let $X$ be an irreducible semiring scheme which is locally isomorphic to $\Spec R$, where $R$ is a zero-sum free semiring (Definition \ref{definition: ideals for semirings}) with only trivial idempotent pairs (Definition \ref{definition: idempotent pair}). Then any vector bundle of rank $n$ on $X$ is a coproduct of $n$ copies of line bundles on $X$. Moreover, this decomposition is unique up to permuting summands. 
	\end{nothma}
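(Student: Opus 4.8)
The plan is to describe a rank-$n$ vector bundle $E$ on $X$ by a \v{C}ech $1$-cocycle of transition matrices on a trivializing cover and then use the near-rigidity of bases (Proposition \ref{proposition: unique basis}) to force those matrices into a very restricted shape. First I would fix an open cover $\{U_i\}_{i\in I}$ of $X$ by \emph{connected} affine opens $U_i\cong\Spec R_i$ with each $R_i$ zero-sum free with only trivial idempotent pairs, fine enough that $E|_{U_i}\cong\mathcal{O}_{U_i}^{\,n}$ (possible by the local hypothesis, after refining and using that the relevant class of semirings is stable under the localizations that occur; note that every nonempty open of the irreducible space $X$ is itself irreducible, hence connected). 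Choosing trivializations $\phi_i$ yields transition data $g_{ij}:=\phi_i\phi_j^{-1}\in \mathrm{GL}_n(\mathcal{O}_X(U_i\cap U_j))$ satisfying the cocycle identity on triple overlaps. The crucial observation, which is where Proposition \ref{proposition: unique basis} enters, is that over any connected open $U$ the group $\mathrm{GL}_n(\mathcal{O}_X(U))$ consists only of \emph{monomial matrices}, i.e. products $P_\sigma D$ of a permutation matrix and a diagonal matrix of units: over a good affine this is exactly the statement that any basis of $R_i^{\,n}$ is a permutation of the standard one up to units, and the connected case reduces to it since ``being zero'' and ``being a unit'' are local conditions. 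Writing $\sigma_{ij}\in S_n$ for the underlying permutation of $g_{ij}$, the map $g\mapsto\sigma_g$ is a group homomorphism compatible with restriction, so $\{\sigma_{ij}\}$ is an $S_n$-valued cocycle.

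For existence I would kill the permutation part. Since $X$ is irreducible, any finitely many nonempty opens meet, so every triple intersection $U_i\cap U_j\cap U_k$ is nonempty and the relations $\sigma_{ij}\sigma_{jk}=\sigma_{ik}$ hold in $S_n$ for \emph{all} $i,j,k$. Fixing $i_0$ and setting $h_i:=\sigma_{ii_0}$ gives $\sigma_{ij}=h_ih_j^{-1}$, so the $S_n$-cocycle is a coboundary. Replacing $\phi_i$ by $P_{h_i}^{-1}\phi_i$ changes $g_{ij}$ to $g'_{ij}:=P_{h_i}^{-1}g_{ij}P_{h_j}$, whose underlying permutation is $h_i^{-1}\sigma_{ij}h_j=e$; hence $g'_{ij}=\operatorname{diag}(u^{(1)}_{ij},\dots,u^{(n)}_{ij})$ with $u^{(k)}_{ij}\in\mathcal{O}_X(U_i\cap U_j)^\times$. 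Each $\{u^{(k)}_{ij}\}_{ij}$ is then an $\mathcal{O}_X^\times$-cocycle, i.e. a line bundle $L_k$ on $X$, and unwinding the gluing gives $E\cong L_1\oplus\cdots\oplus L_n$, a coproduct of $n$ line bundles.

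For uniqueness I would assume $E\cong\bigoplus_k L_k\cong\bigoplus_k L_k'$. After passing to a common trivializing cover of the required type, $E$ is represented by the two diagonal cocycles $\operatorname{diag}(a^{(k)}_{ij})$ and $\operatorname{diag}(b^{(k)}_{ij})$, which are therefore cohomologous: there are $h_i\in\mathrm{GL}_n(\mathcal{O}_X(U_i))$ with $h_i\operatorname{diag}(a^{(\bullet)}_{ij})h_j^{-1}=\operatorname{diag}(b^{(\bullet)}_{ij})$. Each $h_i$ is monomial with some underlying permutation $\tau_i$; comparing underlying permutations over the nonempty overlap $U_i\cap U_j$ forces $\tau_i\tau_j^{-1}=e$, so all $\tau_i$ equal a single $\tau\in S_n$. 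Writing $h_i=P_\tau D_i$ and using that conjugation by $P_\tau$ permutes diagonal entries by $\tau$, the cocycle relation becomes $b^{(k)}_{ij}=(D_i)_{\tau^{-1}(k)}\,a^{(\tau^{-1}(k))}_{ij}\,(D_j)_{\tau^{-1}(k)}^{-1}$, so for each $k$ the cocycle $b^{(k)}$ is cohomologous to $a^{(\tau^{-1}(k))}$, i.e. $[L_k']=[L_{\tau^{-1}(k)}]$ in $\Pic(X)$. Hence the multiset $\{[L_1],\dots,[L_n]\}$ is an invariant of $E$.

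I expect the conceptual heart of the argument to be precisely this reduction: the obstruction to splitting $E$ is the class of the associated $S_n$-cocycle --- an ``$S_n$-torsor'' on $X$ --- and irreducibility is exactly what trivializes it (on an irreducible space a locally constant sheaf is constant, and a constant-group torsor over it is split); without irreducibility the theorem fails. The only genuinely technical nuisance is propagating the monomial-matrix description of $\mathrm{GL}_n$ from good \emph{affines} to the connected, possibly non-affine overlaps that appear in the cocycle, which I would dispatch with a preliminary lemma on stability of zero-sum freeness and of the idempotent-pair condition under localization, together with the locality of ``zero'' and ``unit''. The remaining ingredients --- finite biproducts in $\mathcal{O}_X$-modules, the dictionary between vector bundles and $\mathrm{GL}_n$-cocycles, and the bookkeeping with permutation matrices --- are routine.
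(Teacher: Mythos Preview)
Your argument is correct and is essentially the paper's proof unpacked at the \v{C}ech level: the paper packages exactly the same three ingredients---the semidirect-product description $\mathrm{GL}_n\cong(\mathcal{O}_X^\times)^n\rtimes S_n$, the vanishing of $H^1(X,\underline{S_n})$ on an irreducible space, and the $S_n$-orbit description of the fibers of $f_*$---into the long exact sequence of nonabelian $H^1$ attached to the sheaf sequence $0\to(\mathcal{O}_X^\times)^n\to\mathrm{GL}_n(\mathcal{O}_X)\to\underline{S_n}\to 0$, citing \cite[Prop.~5.3.1]{grothendieck1958general} for uniqueness where you compute the coboundary by hand. One small remark on your ``technical nuisance'': rather than proving that the trivial-idempotent-pair condition is stable under localization, the paper deduces it on every affine open directly from connectedness via Proposition~\ref{proposition: connected implies no idempotet paris}, which is cleaner and sidesteps the issue on non-affine overlaps as well (your locality-of-units-and-zeros remark then finishes it).
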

	
	The following theorem generalizes our previous result on line bundles, showing that under certain technical conditions, the set of vector bundles of a monoid scheme $X$ is stable under the scalar extension to any idempotent semifield:
	
	\begin{nothmb}(Theorem \ref{theorem: bundles stable under scalar extension})
		Let $X$ be an irreducible monoid scheme satisfying Condition \ref{condition: condition on open cover}, and $K$ be an idempotent semifield. Then, there exists a natural bijection between $Vect_n(X)$ and $Vect_n(X_K)$. 
	\end{nothmb}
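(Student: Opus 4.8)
The plan is to reduce the statement to the stability of the Picard group under scalar extension, using the decomposition of vector bundles into line bundles that is available on both sides.

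\emph{Step 1: reduce both sides to symmetric powers of Picard groups.} First I would check that $X_K$ falls within the scope of Theorem \ref{theorem: vector bundle decomposition}. By Condition \ref{condition: condition on open cover}, $X$ is covered by affine opens $\Spec A$ with $A$ a pointed monoid, and $X_K$ is then covered by the opens $\Spec(A\otimes_{\mathbb{F}_1}K)$, where $A\otimes_{\mathbb{F}_1}K$ is the monoid semiring $K[A]$. Since $K$ is an idempotent semifield it is zero-sum free, and a sum of two elements of $K[A]$ vanishes only if it vanishes coefficient by coefficient, so $K[A]$ is zero-sum free; together with the verification that $K[A]$ has only trivial idempotent pairs and that $X_K$ is again irreducible, this makes Theorem \ref{theorem: vector bundle decomposition} applicable to $X_K$. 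On the other side, Pirashvili's theorem \cite{pirashvili2015cohomology} decomposes every vector bundle on the (connected, since irreducible, and separated) monoid scheme $X$ into a coproduct of line bundles, the decomposition being essentially unique by an argument as in Theorem \ref{theorem: vector bundle decomposition}. Consequently there are canonical bijections
\[
Vect_n(X)\ \xrightarrow{\ \sim\ }\ \mathrm{Sym}^n\big(\Pic X\big),\qquad Vect_n(X_K)\ \xrightarrow{\ \sim\ }\ \mathrm{Sym}^n\big(\Pic X_K\big),
\]
sending a rank-$n$ bundle to the multiset of isomorphism classes of the line bundles in its decomposition, where $\mathrm{Sym}^n$ of an abelian group denotes the quotient of its $n$-fold product by the $S_n$-action; the inverse maps send a multiset to the coproduct of representatives.

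\emph{Step 2: identify the base-change map.} Let $\pi\colon X_K\to X$ be the projection. The pullback $\pi^{*}$ carries rank-$n$ vector bundles to rank-$n$ vector bundles and line bundles to line bundles — locally it sends $\mathcal{O}_X^{\,n}$ to $\mathcal{O}_{X_K}^{\,n}$ — and, being a left adjoint, it commutes with finite coproducts, so $\pi^{*}(L_1\sqcup\cdots\sqcup L_n)\cong \pi^{*}L_1\sqcup\cdots\sqcup\pi^{*}L_n$. Hence, under the bijections of Step 1, the base-change map $\pi^{*}\colon Vect_n(X)\to Vect_n(X_K)$ is identified with $\mathrm{Sym}^n$ of the base-change homomorphism $\pi^{*}\colon\Pic X\to\Pic X_K$.

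\emph{Step 3: conclude.} The stability of the Picard group under scalar extension, established in \cite{jun2019picard} for $K=\mathbb{T}$ and valid for an arbitrary idempotent semifield $K$ by the same argument, says that $\pi^{*}\colon\Pic X\to\Pic X_K$ is an isomorphism. Since the $n$-th symmetric power of a bijection is again a bijection, $\pi^{*}\colon Vect_n(X)\to Vect_n(X_K)$ is a bijection, and it is natural because it is induced by base change. I expect the main obstacle to lie in Step 1: deducing from Condition \ref{condition: condition on open cover} that the scalar extension of each local model is zero-sum free with only trivial idempotent pairs and that $X_K$ remains irreducible, so that Theorem \ref{theorem: vector bundle decomposition} genuinely applies to $X_K$; a secondary point is confirming that the Picard-stability argument of \cite{jun2019picard} carries over to a general idempotent semifield rather than only to $\mathbb{T}$.
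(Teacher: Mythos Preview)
Your proposal is correct and follows essentially the same route as the paper: both reduce to the $S_n$-orbits (your $\mathrm{Sym}^n$) of the Picard group via Pirashvili's result on the monoid side and Theorem~\ref{theorem: vector bundle decomposition} on the semiring side, and then invoke \cite[Theorem~3.19]{jun2019picard} (stated there for an arbitrary idempotent semifield $K$, so your secondary worry is unfounded). Your Step~2, identifying the bijection explicitly as the pullback $\pi^*$ and thereby exhibiting naturality directly, is a small improvement over the paper's proof, which simply composes the three bijections without naming the resulting map; note also that the ``only trivial idempotent pairs'' check you flag in Step~1 is automatic once $X_K$ is shown irreducible, by Proposition~\ref{proposition: connected implies no idempotet paris}.
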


	In \S \ref{section: topological bundles}, we introduce the notion of topological $\mathbb{T}$-vector bundles, and then prove the following, which is analogous to Theorem A, but also implies that in this setting line bundles are trivial.

	\begin{nothmc}(Theorem \ref{theorem: main theorem of topological bundles})
		Let $X$ be a connected paracompact Hausdorff space.  Then there is a canonical split surjection from the set of isomorphism classes of topological $\mathbb{T}$-vector bundles of rank $n$ on $X$ to the set of isomorphism classes of $n$-fold covering spaces of $X$.  Furthermore, a topological $\mathbb{T}$-vector bundle is trivial if and only if its associated covering space is trivial.
	\end{nothmc}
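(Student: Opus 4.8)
The plan is to analyse the structure group $GL_n(\mathbb{T})=\mathrm{Aut}_{\mathbb{T}}(\mathbb{T}^n)$ of a rank-$n$ topological $\mathbb{T}$-vector bundle and to exhibit it as a canonical, \emph{split} extension by a symmetric group. By the rigidity of bases over idempotent semifields (Proposition~\ref{proposition: unique basis}, applied to $\mathbb{T}$, which is a zero-sum free semifield with only trivial idempotent pairs), every $\mathbb{T}$-linear automorphism of $\mathbb{T}^n$ is a monomial matrix: it permutes the $n$ coordinate rays $\mathbb{T}\cdot e_i$ and rescales them by units of $\mathbb{T}$, and the units of $\mathbb{T}$ are exactly $\mathbb{R}$. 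Hence $GL_n(\mathbb{T})\cong(\mathbb{T}^{\times})^n\rtimes S_n$, which topologically is a disjoint union of $n!$ copies of $\mathbb{R}^n$ (one per permutation), and there is a canonical short exact sequence of topological groups
\[
1\longrightarrow(\mathbb{T}^{\times})^n\longrightarrow GL_n(\mathbb{T})\xrightarrow{\ \pi\ }S_n\longrightarrow1,
\]
split by the permutation matrices; here the kernel is intrinsically the subgroup of automorphisms fixing each member of the (well-defined, up to permutation and scaling) set of coordinate rays, and $\pi$ records the induced permutation of that set. Because this datum is canonical, the coordinate rays in the fibres of a rank-$n$ bundle $E$ are intrinsic and assemble into an $n$-fold covering space $c(E)\to X$; concretely, for a trivializing open cover with $GL_n(\mathbb{T})$-valued transition cocycle $\{g_{\alpha\beta}\}$, the $S_n$-valued cocycle $\{\pi(g_{\alpha\beta})\}$ defines a principal $S_n$-bundle whose associated $n$-point bundle is $c(E)$. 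The assignment $E\mapsto c(E)$ is functorial, passes to isomorphism classes, and sends trivial bundles to trivial covers.

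Next I would show that $E$ is determined up to isomorphism by an $S_n$-reduction of its structure group. Since $GL_n(\mathbb{T})/S_n\cong\mathbb{R}^n$, with $GL_n(\mathbb{T})$ acting through $\mathbb{R}^n\rtimes S_n\subseteq\mathrm{Aff}(\mathbb{R}^n)$, the associated bundle with fibre $\mathbb{R}^n$ over the paracompact space $X$ admits a global section by a partition-of-unity argument (this is where paracompactness enters: the relevant obstruction lies in $H^1(X;-)$ of a soft sheaf, hence vanishes). Choosing one reduces the structure group of $E$ to a principal $S_n$-bundle $Q$, and then $E\cong Q\times_{S_n}\mathbb{T}^n$ while $c(E)\cong Q\times_{S_n}\{1,\dots,n\}$, from which $Q$ is recovered as the frame bundle of $c(E)$ (the bundle of bijections $\{1,\dots,n\}\to c(E)_x$). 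Any two such sections differ by a global section of a bundle with contractible fibre, so the reductions are isomorphic and $Q$ is well defined up to isomorphism.

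It remains to construct a section of $c$ and to prove the triviality criterion. Given an $n$-fold covering $p\colon Y\to X$, set $s(Y)=\mathrm{Fr}(Y)\times_{S_n}\mathbb{T}^n$, where $\mathrm{Fr}(Y)$ is the frame bundle of $Y$; equivalently, $s(Y)$ is the bundle whose fibre over $x$ is the free $\mathbb{T}$-module on $p^{-1}(x)$, equivalently $p_*\underline{\mathbb{T}}_Y$. This is a rank-$n$ topological $\mathbb{T}$-vector bundle, and its coordinate rays over $x$ are precisely the point-mass rays indexed by $p^{-1}(x)$, so $c(s(Y))\cong Y$ canonically; thus $c\circ s=\mathrm{id}$ on isomorphism classes and $c$ is a canonical split surjection (in particular surjective), as claimed. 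For triviality: if $E$ is trivial then $c(E)$ is trivial by the first paragraph; conversely, if $c(E)$ is the trivial cover then its frame bundle $Q$ is the trivial principal $S_n$-bundle, hence $E\cong Q\times_{S_n}\mathbb{T}^n$ is the trivial $\mathbb{T}$-vector bundle by the second paragraph. Specializing to $n=1$, where $S_1$ is trivial, recovers the assertion that every topological $\mathbb{T}$-line bundle is trivial.

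The main obstacle is the canonicity underlying the first paragraph: one must verify that the coordinate-ray covering space $c(E)$ — equivalently, that the reduction of $GL_n(\mathbb{T})$ to $S_n$ — is genuinely independent of all choices of local trivialization and is functorial in $E$. This is exactly where the idiosyncrasies of linear algebra over $\mathbb{T}$ are indispensable: it is the ``unique basis up to permutation and scaling'' phenomenon (Proposition~\ref{proposition: unique basis}) that makes the set of coordinate rays intrinsic, so that the extension $1\to(\mathbb{T}^{\times})^n\to GL_n(\mathbb{T})\to S_n\to1$ is canonical; over a field no analogous canonical covering space exists, and a rank-$n$ bundle carries no such discrete invariant. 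The remaining inputs — that monomial matrices exhaust $GL_n(\mathbb{T})$, the existence of the $S_n$-reduction over a paracompact base, and the dictionary between principal $S_n$-bundles and $n$-fold covering spaces — are either already available or standard, so I expect the write-up to be short once this canonicity is pinned down.
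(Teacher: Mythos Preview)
Your argument is correct and rests on the same mathematical core as the paper's: the split extension $1\to(\mathbb{T}^\times)^n\to GL_n(\mathbb{T})\to S_n\to 1$ together with the contractibility of $(\mathbb{T}^\times)^n=\mathbb{R}^n$, exploited via paracompactness. The packaging differs. The paper works sheaf-theoretically: it identifies topological $\mathbb{T}$-bundles with locally free sheaves over the sheaf $\mathcal{O}_X$ of continuous $\mathbb{T}$-valued functions, sheafifies the split sequence to $0\to(\mathcal{O}_X^\times)^n\to GL_n(\mathcal{O}_X)\to\underline{S_n}\to 0$, and then reads off the result from the long exact sequence in nonabelian $H^1$, using that $\mathcal{O}_X^\times$ is the sheaf of continuous real-valued functions, hence acyclic by partitions of unity. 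You instead argue geometrically with principal bundles: you push the transition cocycle along $\pi$ to produce the covering space, and you reduce the structure group from $GL_n(\mathbb{T})$ to $S_n$ by finding a section of the associated affine $\mathbb{R}^n$-bundle. Your route is slightly more explicit about \emph{what} the covering space is (the bundle of coordinate rays, equivalently $\mathrm{Fr}(c(E))\times_{S_n}\{1,\dots,n\}$) and gives a concrete inverse $s(Y)=p_*\underline{\mathbb{T}}_Y$; the paper's route plugs directly into the cohomological machinery already set up for the scheme-theoretic case and so requires no separate discussion of structure-group reduction. One minor point: your phrase ``differ by a global section of a bundle with contractible fibre'' is imprecise---what you actually need (and what holds) is that the space of sections of the affine $\mathbb{R}^n$-bundle is convex, hence any two sections are homotopic, so the resulting $S_n$-reductions are isomorphic.
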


	In \S \ref{section: labelled k-algebras} and \S \ref{section: Detropicalization of line bundles}, we recall and utilize \emph{labelled $K$-algebras} (Definition \ref{definition: labelled algebra}) to study line bundles on tropical schemes in relation to those on ordinary schemes. In fact, the adoption of labelled algebras enables us to lift line bundles on a tropical scheme to an ordinary scheme in a suitable way. 
	
	Let $K$ be a valued field whose value group is $\mathbb{R}$ and $\mathcal{O}_K$ be the associated valuation ring. A labelled $K$-algebra is a $K$-algebra $A$ equipped with an epimorphism $\phi:K[M] \to A$ for some monoid $M$ such that $\phi|_M$ is injective. The extra structure $\phi$ allows one to perform the scheme-theoretic tropicalization for $\Spec A$ with respect to the map $\phi$. We note that the idea of labelled algebras is not new; a similar idea has been implemented in tropical geometry as a tool to study scheme-theoretic tropicalization, for instance, see \cite{lorscheid2015scheme} and \cite{giansiracusa2014universal}. 
	
	Another key player in \S \ref{section: labelled k-algebras} and \S \ref{section: Detropicalization of line bundles} is the semiring of finitely generated submodules. To be precise, we work in the following setting: Let $A$ be a $K$-algebra and $\mathbb{S}_\textrm{fg}(A)$ be the set of finitely generated $\mathcal{O}_K$-submodules of $A$. The set $\mathbb{S}_\textrm{fg}(A)$ is naturally equipped with a semiring structure; $M+N:=\angles{M \cup N}$ and $MN:=\angles{MN}$ for finitely generated $\mathcal{O}_K$-submodules $M$, $N$.  The algebraic structure of $\mathbb{S}_\textrm{fg}(A)$ is similar in many respects to that of $A$.  For instance they have the same Zariski topology (Theorem \ref{theorem: topology of space of submodules}).  In a similar vein, \cite{tolliver2016extension} proves the classical extension of valuations theorem for a field extension $L/K$ by reducing it to the problem of extending the valuation on $\mathbb{S}_\textrm{fg}(K)$ to one on $\mathbb{S}_\textrm{fg}(L)$.
	
	
	Equipped with the above, in \S \ref{section: labelled k-algebras} we prove that for a labelled $K$-algebra $A$ the semiring $\mathbb{S}_{\mathrm{fgmon}}(A)$ of finitely generated monomial $\mathcal{O}_K$-submodules of $A$ is the same thing as the scheme-theoretic tropicalization of $A$. To be precise, we prove the following. 
	
	\begin{nothmd}(Proposition \ref{proposition: monomial valuation tropicalization} and Corollary \ref{corollary: tropical interpretation of monomial submodules})
		Let $(A,\phi)$ be a labelled $K$-algebra.  Let $S$ be a $\mathbb{T}$-algebra.  Let $\mathrm{Trop}(A)$ be the coordinate semiring of the scheme-theoretic tropicalization of $\Spec A$ with respect to $\phi:K[M] \to A$. Then, there is a natural one-to-one correspondence:
		\[
		\{\textrm{homomorphisms }     \mathrm{Trop}(A) \rightarrow S \}	\longleftrightarrow \{\textrm{monomial valuations on $A$ with values in $S$} \}.
		\] 
		Furthermore, $\mathrm{Trop}(A) \simeq \mathbb{S}_{\mathrm{fgmon}}(A)$ as $\mathbb{T}$-algebras.
	\end{nothmd}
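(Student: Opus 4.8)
The plan is to establish the two assertions in sequence, since the second follows from the first once we identify the relevant universal object. First I would fix notation: write $\phi\colon K[M]\to A$ for the labelling epimorphism, and recall from the definition of scheme-theoretic tropicalization (following \cite{giansiracusa2016equations}) that $\mathrm{Trop}(A)$ is obtained by applying the bend relations to the image of $\mathbb{T}[M]\to \mathbb{T}$-algebras coming from $\ker\phi$; concretely, $\mathrm{Trop}(A)$ is the quotient of the free $\mathbb{T}$-algebra $\mathbb{T}[M]$ by the congruence generated by the bend relations of the tropicalized polynomials $\mathrm{trop}(f)$ for $f\in\ker\phi$. The key observation is that a homomorphism $\mathbb{T}[M]\to S$ of $\mathbb{T}$-algebras is the same datum as a monoid homomorphism $M\to (S,\cdot)$, i.e. an assignment of a ``value'' in $S$ to each monomial, and that such a homomorphism factors through $\mathrm{Trop}(A)$ precisely when it respects the bend relations coming from $\ker\phi$. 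So the heart of the matter is to show that ``respecting all the bend relations of $\mathrm{trop}(f)$, $f\in\ker\phi$'' is equivalent to the defining condition for a monomial valuation on $A$ with values in $S$.

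The main step, then, is Proposition \ref{proposition: monomial valuation tropicalization}: unwinding the definition of a monomial valuation $v\colon A\to S$ (a valuation which, via $\phi$, is determined by its values $v(\phi(m))$ on monomials and satisfies $v(\phi(f))\ge \min$ over the monomials appearing in $f$, with the usual tropical subadditivity and the equality-unless-the-min-is-attained-twice phenomenon encoded by the bend relations). I would argue in two directions. Given a homomorphism $\psi\colon \mathrm{Trop}(A)\to S$, compose with $\mathbb{T}[M]\to\mathrm{Trop}(A)$ to get values on monomials, extend to all of $A$ by $v(a):=\min$ over monomials in any preimage under $\phi$ of a representative — here one must check this is well-defined, which is exactly where the bend relations for $f\in\ker\phi$ are used, since two representatives differ by an element of $\ker\phi$. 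Conversely, a monomial valuation $v$ on $A$ gives a monoid map $M\to S$ via $m\mapsto v(\phi(m))$, hence a $\mathbb{T}$-algebra map $\mathbb{T}[M]\to S$; the valuation axioms for elements $\phi(f)$ with $f\in\ker\phi$ (where $\phi(f)=0$) force precisely the bend relations, so the map descends to $\mathrm{Trop}(A)$. One checks these two assignments are mutually inverse.

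For the ``furthermore'' clause, the strategy is to recognize $\mathbb{S}_{\mathrm{fgmon}}(A)$ as the universal target for monomial valuations. Concretely, I would construct the tautological monomial valuation on $A$ valued in $\mathbb{S}_{\mathrm{fgmon}}(A)$ — sending $a$ to the finitely generated monomial $\mathcal{O}_K$-submodule it generates together with the relevant monomials — and verify it is a monomial valuation in the sense above. Then, using the one-to-one correspondence just established (with $S=\mathbb{S}_{\mathrm{fgmon}}(A)$), this valuation corresponds to a $\mathbb{T}$-algebra homomorphism $\mathrm{Trop}(A)\to\mathbb{S}_{\mathrm{fgmon}}(A)$. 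To produce the inverse, I would observe that every finitely generated monomial submodule is, by definition, a finite sum $\mathcal{O}_K m_1 + \cdots + \mathcal{O}_K m_k$ of monomials (up to the valuation-ideal grading coming from $\mathcal{O}_K$), so it is the image of a monomial in $\mathbb{T}[M]$ decorated by an element of the value group $\mathbb{R}$; checking that the semiring operations (sum $=\langle M\cup N\rangle$, product $=\langle MN\rangle$) match the operations in $\mathrm{Trop}(A)$, and that the bend relations are exactly the relations among such submodules imposed by $\ker\phi$, then yields a well-defined homomorphism the other way, inverse to the first.

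The hard part will be the well-definedness and the matching of relations: one must show that the ``submodule generated by the monomials in a preimage'' does not depend on the choice of representative, equivalently that the bend congruence on $\mathbb{T}[M]$ is neither too coarse nor too fine relative to the ideal-theoretic structure of $\mathbb{S}_{\mathrm{fgmon}}(A)$. This amounts to a careful comparison of (a) the bend relations attached to each $f\in\ker\phi$ and (b) the identities $\langle m_1,\dots,m_r\rangle = \langle m_1,\dots,m_r, n\rangle$ that hold in $\mathbb{S}_{\mathrm{fgmon}}(A)$ when $n$ lies in the submodule generated by the $m_i$. I expect this to reduce, after choosing suitable generators, to a finite check on the monomials appearing in a single element of $\ker\phi$, using that $\phi|_M$ is injective so distinct monomials remain distinct in $A$; the valued-field hypothesis (value group $\mathbb{R}$, valuation ring $\mathcal{O}_K$) ensures the book-keeping of ``how much'' of each monomial appears is faithfully recorded by $\mathbb{T}=\mathbb{R}\cup\{\infty\}$.
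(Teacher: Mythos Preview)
Your overall strategy for the first assertion is correct and matches the paper's: a $\mathbb{T}$-algebra map $\mathbb{T}[M]\to S$ is a monoid map $M\to S$, and factoring through $\mathrm{Trop}(A)$ is precisely the bend-relation condition, which one must identify with the axioms of a monomial valuation. However, you have misread the definition: a monomial valuation is a monoid homomorphism $w\colon \mathbf{Mon}(A)\to S$, \emph{not} a map $A\to S$. So the step ``extend to all of $A$ by $v(a):=\min$ over monomials in any preimage'' is unnecessary and introduces a well-definedness problem you do not actually need to solve. The paper instead passes through $\mathbf{PMon}(A)\cong M$ (using the injectivity hypothesis on $\phi|_M$): it first shows axiom~(2) of Definition~\ref{definition: monomial valuation} is equivalent to the bend-type condition ``$\sum x_i=0 \Rightarrow w(x_i)\le\sum_{j\ne i}w(x_j)$'' (Lemma~\ref{lemma: monomial valuation bend relations}), and then that monomial valuations correspond bijectively to monoid maps $\mathbf{PMon}(A)\to S$ satisfying the corresponding condition with the $K$-scalars pulled out via $\nu$ (Lemma~\ref{lemma: primitive monomial valuation}). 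After that reduction, the identification with $\Hom(\mathrm{Trop}(A),S)$ is immediate.

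For the second assertion your plan---build the tautological valuation into $\mathbb{S}_{\mathrm{fgmon}}(A)$, get a map $\mathrm{Trop}(A)\to\mathbb{S}_{\mathrm{fgmon}}(A)$, then construct an explicit inverse by hand---would work but is more laborious than what the paper does. The paper proves independently (Proposition~\ref{proposition: monomial valuation submodules}) that the map $w_{\mathrm{univ}}\colon\mathbf{Mon}(A)\to\mathbb{S}_{\mathrm{fgmon}}(A)$, $x\mapsto\angles{x}$, is the \emph{universal} monomial valuation: every monomial valuation factors uniquely through it. Combined with the first part, both $\mathrm{Trop}(A)$ and $\mathbb{S}_{\mathrm{fgmon}}(A)$ corepresent the functor $S\mapsto\{\text{monomial valuations on }A\text{ with values in }S\}$, and the isomorphism drops out of Yoneda. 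This avoids your proposed direct verification that the bend congruence on $\mathbb{T}[M]$ matches the submodule relations in $\mathbb{S}_{\mathrm{fgmon}}(A)$; the universal-property route packages exactly that comparison into the single computation $f(N)=\sup_{y\in N\cap\mathbf{Mon}(A)}w(y)=\sum_i w(x_i)$ for any generating set $\{x_i\}$ of $N$.
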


	Finally in \S \ref{section: Detropicalization of line bundles}, we show how one can obtain line bundles on $X=\Spec A$, where $A$ is a finitely generated $K$-algebra, from those on the scheme-theoretic tropicalization of $X$. Recall that for a semiring $S$, we let $\sSpec S$ be the subset of $\Spec S$ consisting of prime ideals which are saturated (Definition \ref{definition: ideals for semirings}), and impose the subspace topology. We first enrich $\sSpec S$ to a semiringed space. This is an essential step to lift line bundles on $\sSpec S$ to $\Spec A$ when $S= \mathbb{S}_\textrm{fg}(A)$. Then, we prove the following.

	\begin{nothme}(Theorem \ref{theorem: picard isomorphism})
		Let $A$ be a $K$-algebra without zero-divisors and is finitely generated as a $K$-algebra. Then one has an isomorphism
		\[
		\Pic(\Spec A) \simeq \Pic(\sSpec \mathbb{S}_\mathrm{fg}(A)).
		\] 
		
	\end{nothme}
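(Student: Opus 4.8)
The plan is to exhibit an explicit equivalence between the categories of line bundles on the two spaces by passing through a morphism of semiringed spaces relating $\Spec A$ and $\sSpec \mathbb{S}_{\mathrm{fg}}(A)$. First I would set up the underlying map of topological spaces: using Theorem \ref{theorem: topology of space of submodules}, which says $\Spec A$ and the Zariski-type space of $\mathbb{S}_{\mathrm{fg}}(A)$ share the same topology, I would identify $\sSpec \mathbb{S}_{\mathrm{fg}}(A)$ with a subspace of $\Spec \mathbb{S}_{\mathrm{fg}}(A)$ and compare it to $\Spec A$. The natural thing is that the assignment $M \mapsto M \cap \mathfrak{p}$-type constructions, or rather the map sending a prime $\mathfrak{p}$ of $A$ to the set of finitely generated $\mathcal O_K$-submodules contained in $\mathfrak{p}$ (or meeting $A \setminus \mathfrak{p}$), gives a homeomorphism onto $\sSpec \mathbb{S}_{\mathrm{fg}}(A)$. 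Here the hypothesis that $A$ has no zero-divisors and is finitely generated over $K$ is what makes the saturation condition on primes match up with honest primes of $A$.

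Next I would compare structure sheaves. On $\sSpec \mathbb{S}_{\mathrm{fg}}(A)$ we use the semiringed-space enrichment constructed just before the theorem in \S\ref{section: Detropicalization of line bundles}; the key point is to produce a morphism of semiringed spaces $(\Spec A, \mathcal O_{\Spec A}) \to (\sSpec \mathbb{S}_{\mathrm{fg}}(A), \mathcal O)$, or a morphism in the other direction, such that pullback of invertible sheaves is an equivalence. Concretely I expect that over a basic open $D(f)$ the sections of the structure sheaf on $\sSpec$ are $\mathbb{S}_{\mathrm{fg}}(A_f)$ or a localization thereof, so that there is a canonical map $\mathcal O_{\Spec A} \to \pi_* \mathcal O_{\sSpec \mathbb{S}_{\mathrm{fg}}(A)}$ sending $a \in A_f$ to the principal submodule $\mathcal O_K \cdot a$. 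A line bundle $\mathcal L$ on $\Spec A$, given by transition functions $g_{ij} \in \mathcal O^\times(U_i \cap U_j)$, then produces transition data $\mathcal O_K \cdot g_{ij}$ which are invertible in $\mathbb{S}_{\mathrm{fg}}$ (their inverses being $\mathcal O_K \cdot g_{ij}^{-1}$), hence a line bundle on $\sSpec \mathbb{S}_{\mathrm{fg}}(A)$; this defines the homomorphism $\Pic(\Spec A) \to \Pic(\sSpec \mathbb{S}_{\mathrm{fg}}(A))$.

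For injectivity and surjectivity I would argue at the level of Picard groups directly, using that for a domain $A$ one has $\Pic(\Spec A) = \mathrm{Cl}$-type descriptions or, more robustly, the Čech description $\Pic(X) = \check{\mathrm H}^1(X, \mathcal O_X^\times)$. The crux is to show that the map of unit sheaves $\mathcal O_{\Spec A}^\times \to \pi_* \mathcal O_{\sSpec}^\times$ induces an isomorphism on $\check{\mathrm H}^1$. Here I expect that the invertible elements of $\mathbb{S}_{\mathrm{fg}}(A_f)$ — the finitely generated $\mathcal O_K$-submodules $M$ with $MN = \mathcal O_K$ for some $N$ — are, because $A$ is a domain and $\mathcal O_K$ a valuation ring of rank one, exactly the principal submodules $\mathcal O_K \cdot u$ with $u$ a unit, possibly up to the value group $\mathbb{R}$ which contributes a divisible (hence $\check{\mathrm H}^1$-acyclic, or at least harmless) factor. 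Making this identification precise, i.e. computing $\mathcal O_{\sSpec}^\times$ and showing the extra $\mathbb{T}^\times = \mathbb{R}$ factor does not affect $\check{\mathrm H}^1$ on an irreducible space, is the step I expect to be the main obstacle; it is essentially a unit-group computation for the semiring of finitely generated submodules combined with a vanishing statement for the constant sheaf $\mathbb{R}$ on the irreducible space $\Spec A$. Once that is in hand, the five-term exact sequence or a direct Čech comparison yields the claimed isomorphism $\Pic(\Spec A) \simeq \Pic(\sSpec \mathbb{S}_{\mathrm{fg}}(A))$.
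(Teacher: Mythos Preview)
Your approach is essentially the same as the paper's: identify $X=\Spec A$ and $Y=\sSpec\mathbb{S}_{\mathrm{fg}}(A)$ as topological spaces via Theorem~\ref{theorem: topology of space of submodules}, compare unit sheaves using the map $a\mapsto\langle a\rangle$, and conclude by a long exact sequence together with vanishing of $H^1$ of a constant sheaf on an irreducible space. Two points deserve sharpening.

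First, the direction and content of your ``extra $\mathbb{R}$ factor'' are off. The paper shows (Theorem~\ref{theorem: invertible submodules}) that every unit of $\mathbb{S}_{\mathrm{fg}}(A_f)$ is of the form $\langle u\rangle$ for a unit $u\in A_f$; the proof uses the weak Nullstellensatz to produce a closed point with residue field $K$, which is precisely where the finite-generation hypothesis enters. Hence $A_f^\times\to\mathbb{S}_{\mathrm{fg}}(A_f)^\times$ is \emph{surjective}, and its kernel is $\{u:\langle u\rangle=\mathcal{O}_K\}=\mathcal{O}_K^\times$. So the relevant short exact sequence of sheaves is
\[
0\longrightarrow\underline{\mathcal{O}_K^\times}\longrightarrow\mathcal{O}_X^\times\longrightarrow\mathcal{O}_Y^\times\longrightarrow 0,
\]
with the constant sheaf sitting as a \emph{kernel} on the $X$ side, not a cokernel on the $Y$ side, and the group is $\mathcal{O}_K^\times$, not $\mathbb{R}$.

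Second, divisibility is irrelevant for the vanishing step: on an irreducible space any constant sheaf is flasque, so $H^1(X,\underline{G})=0$ for every group $G$ (Lemma~\ref{lemma: cohomology vanishing for constant sheaf}). Your instinct that the constant piece is ``harmless'' is right, but the justification is irreducibility of $\Spec A$, not any property of $\mathbb{R}$.
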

	
	
	This paper is organized as follows. In \S \ref{section: preliminaries}, we collect basic notions and properties of monoids and semirings which are used in later sections. In \S \ref{section: Free modules and projective modules over semirings}, we explore several technical properties of free and projective modules over semirings. By appealing to these results, in \S \ref{section: vector bundles for tropical toric schemes}, we introduce the notion of vector bundles for tropical schemes, and investigate their properties, and prove our main first main theorems. In \S \ref{section: topological bundles}, we study topological $\mathbb{T}$-bundles. In \S \ref{section: labelled k-algebras}, we introduce the notion of labelled $K$-algebras, and prove our Theorem D. Finally, in \S \ref{section: Detropicalization of line bundles}, we prove Theorem E and introduce lifting maps.

	\bigskip
	
	\textbf{Acknowledgment} J.J. was supported by an AMS-Simons travel grant. The authors would like to thank the anonymous referee for their comments and suggestions in improving the manuscript.

	\section{Preliminaries}\label{section: preliminaries}
	
	In this section, we briefly recall basic definitions and properties of monoid schemes and semiring schemes (also tropical schemes as a special case) which play a key role in the later sections. Most of the material in this section can be found in \cite{giansiracusa2016equations}, \cite{jun2017vcech}, \cite{jun2019picard}, \cite{jun2020lattices}.

	\subsection{Monoid schemes}
	
	In this paper, by a monoid, we will always mean a commutative (multiplicative) monoid $M$ with an absorbing element $0_M$, that is, $0_M\cdot a=0_M$ for all $a \in M$. The main reason that we prefer to work with monoids $M$ with $0_M$ is that they seem to behave more nicely. For monoids $M$ without $0_M$, some pathological examples may arise, for instance a free $M$-module is not flat in general in this case. See \cite[\S 2.1]{pirashvili2015cohomology} for some discussion. 
	
	Let $M$ be a monoid. An nonempty subset $I\subseteq M$ is said to be an \emph{ideal} if $MI \subseteq I$. An ideal $I$ is said to be \emph{prime} if $M- I$ is a multiplicative nonempty subset of $M$, and \emph{maximal} if $I$ is a proper ideal (i.e., $I \neq M$) which is not contained in any other proper ideal.

	The \emph{prime spectrum} $\Spec M$ of a monoid $M$ is the set of all prime ideals of $M$ equipped with the Zariski topology. As in the prime spectrum of a commutative ring, the set $\{D(f)\}_{f \in M}$ forms an open basis of $\Spec M$, where $D(f):=\{\mathfrak{p} \in \Spec M \mid f \not\in \mathfrak{p}\}$. We also let $V(f):=D(f)^c$. 
	
	One can construct affine monoid schemes via prime ideals and the Zariski topology by appropriately constructing the structure sheaf for $X=\Spec M$ mimicking the case of rings, or one can simply define the category of affine monoid schemes as the opposite category of monoids. Since a monoid scheme can be also understood as a functor of points, these two definitions are equivalent, i.e., a monoid scheme is a functor which is locally representable by monoids. A monoid scheme is then defined to be a topological space equipped with a structure sheaf of monoids locally isomorphic to affine monoid schemes. We refer the reader to \cite[\S 2]{jun2019picard} for the precise definitions and details. 
	
	Let $M$ be a monoid and $R$ be a ring. Here we take the naive approach and interpret $\mathbb{F}_1$ as a pointed multiplicative monoid with one element which we call 1 and absorbing element 0;  alternatively, $\mathbb{F}_1$ is an initial object in the category of (pointed) monoids. We define $M\otimes_{\mathbb{F}_1}R$ to be the monoid ring $R[M]$, where we identify $0_M$ and $0_R$. 
 This defines a functor from the category of monoids to the category of $R$-algebras, sending $M$ to $ R[M]$. There is also a natural functor (forgetful functor) $\mathcal{F}$ from the category of rings to the category of monoids sending a ring $A$ to $(A,\times)$, the underlying multiplicative monoid with $0_A$. There is an adjunction:
	\[
	\Hom_{\textbf{Monoids}}(M,\mathcal{F}(A)) \simeq \Hom_{\textbf{R-algebra}}(M\otimes_{\mathbb{F}_1}R,A).
	\]
	This gives rise to a functor $-\otimes_{\mathbb{F}_1}R$ from the category of monoid schemes to the category of schemes sending $X$ to $X_R:=X\otimes_{\mathbb{F}_1}R$. One can similarly construct the base-change functor for quasi-coherent sheaves on a monoid scheme $X$ to a scheme $X_R$. 
	The base-change functor is constructed in the exact same way when $R$ is a semiring.  One can find the details of that construction in \cite[Section 2.3]{giansiracusa2016equations}. 
 
	\begin{myeg}
		Let $M=\{t_1^{i_1}t_2^{i_2}\cdots t_n^{i_n} \mid i_j \in \mathbb{N} \} \cup \{0\}$, the set of monomials in $n$ variables along with the absorbing element $0$. The monoid scheme $\Spec M$ is the \emph{monoid affine $n$-space} or \emph{the affine $n$-space over $\mathbb{F}_1$}, denoted by $\mathbb{A}^n$. For any field $k$, one obtains $\mathbb{A}^n\otimes_{\mathbb{F}_1}k=\mathbb{A}_k^n=\Spec k[t_1,\dots,t_n]$, the affine space over $k$. One can glue copies of $\mathbb{A}^n$ to obtain the monoid projective space $\mathbb{P}^n$ as in the classical case, or one can also apply the Proj-construction to $M$, that is $\mathbb{P}^n=\textrm{Proj}~M$. One then obtains $\mathbb{P}^n\otimes_{\mathbb{F}_1}k=\mathbb{P}^n_k$ for a field $k$. 
	\end{myeg}
	
	Let $M$ be a monoid, by an $M$-set we mean simply a nonempty pointed set with $M$-action. When the context is clear, we will interchangeably use $M$-sets and $M$-modules. For $M$-sets $A$ and $B$, one can define the tensor product $A \otimes_M B$ which satisfies the tensor-hom adjunction. Moreover, when $A$ and $B$ are monoids, $A\otimes_M B$ becomes a monoid in a natural way. If we do not assume that monoids are equipped with an absorbing element, the definition of tensor product is slightly different, see \cite{deitmar} for more details
 
	Let $X$ be a monoid scheme. By an $\mathcal{O}_X$-module, we mean a sheaf $\mathcal{F}$ of pointed sets such that $\mathcal{F}(U)$ is an $\mathcal{O}_X(U)$-set for each open subset $U$ of $X$ satisfying an obvious compatibility condition. For $\mathcal{O}_X$-modules $\mathcal{F}$ and $\mathcal{G}$, by tensor products of $M$-sets (along with sheafification), we obtain the tensor product $\mathcal{F} \otimes_{\mathcal{O}_X}\mathcal{G}$. 
	
	A special type of monoids arises from toric geometry as follows: Let $N$ be a lattice and $\sigma \subseteq N\otimes_\Z\R$ be a strongly convex rational polyhedral cone. In the construction of the affine toric variety $X_\sigma$ associated to $\sigma$, one naturally obtains a monoid $S_\sigma$, and hence the affine monoid scheme $\Spec S_\sigma$. Since our monoid $M$ should include $0_M$, we actually consider $S_\sigma \cup \{0\}$. But $S_\sigma$ and $S_\sigma \cup \{0\}$ give the same affine toric variety after the base change to a field. More generally, from a fan $\Delta$ one can construct a monoid scheme $X(\Delta)$. The fan tells us how to glue the affine pieces corresponding to each cone of the fan. Monoid schemes constructed in this way are called \emph{toric monoid schemes}. In \cite{cortinas2015toric}, the authors characterize toric monoid schemes, namely, a monoid scheme is toric if and only if it is a separated, connected, torsion free, normal monoid scheme of finite type.

	\begin{mydef}
		Let $X$ be a monoid scheme. A \emph{vector bundle} is an $\mathcal{O}_X$-module $\mathcal{F}$  such that for each $x \in X$, there exist an open neighborhood $U$ of $x$ and a set $I$ such that
		\[
		\mathcal{F}|_{U} \simeq \bigoplus_{i \in I} \mathcal{O}_X|_U.
		\]
		A vector bundle $\mathcal{F}$ is said to be \emph{rank $n$} if $|I|=n$ for each open subset $U$. 
	\end{mydef}
	
	Vector bundles on monoid schemes $X$ (only in the special case of monoid projective spaces $X=\mathbb{P}^n$) was first considered by H.-C.~Graf von Bothmer, L.~Hinsch, and U.~Stuhler in \cite{bothmer2011vector} where the authors showed that all vector bundles on $\mathbb{P}^n$ are direct sums of line bundles. Later, in \cite{pirashvili2015cohomology}, Pirashvili proved that over separated monoid schemes, all vector bundles are direct sums of line bundles.

	\subsection{Semiring schemes and tropical toric schemes}

	A semiring is a set $R$ with two binary operations (addition $+$ and multiplication $\cdot$ ) satisfying the same axioms as rings, except the existence of additive inverses. In this paper, a semiring is always assumed to be commutative unless otherwise stated. A semiring $(R,+,\cdot)$ is said to be a \emph{semifield} if $(R\backslash\{0_R\},\cdot)$ is a group. As in the case of monoids, one can mimic the classical constructions for schemes to construct analogous objects for semirings, for instance, semiring schemes and quasi-coherent sheaves. We will not repeat these constructions here, instead we refer the readers to \cite[\S 2.2]{jun2019picard} and the references therein. 
	
	\begin{mydef}\label{definition: ideals for semirings}
		Let $R$ be a semiring. 
		\begin{enumerate}
			\item 
			$R$ is said to be \emph{zero-sum free} if $a+b=0$ implies $a=b=0$ for all $a,b \in R$. 
			\item 
			An \emph{ideal} $I$ of $R$ is an additive submonoid $I \subseteq R$ such that $IR \subseteq I$. 
			\item 
			An ideal $I$ of $R$ is said to be \emph{saturated} if $x, x+y \in I$ implies $y \in I$ for all $x,y \in R$. 
			\item
			The \emph{radical} of an ideal $I$ of $R$ is
			\[
			\sqrt{I}=\{a \in R \mid a^n \in I \textrm{ for some } n \in \mathbb{N}\}.
			\]
			\item 
			The \emph{nilradical} of $R$ is
			\[
			N=\{a \in R \mid a^n=0 \textrm{  for some } n \in \mathbb{N}\}.
			\]
			\item 
			A semiring $R$ is \emph{additively idempotent} if $a+a = a$ for all $a\in R$.	
            \item A semiring $R$ is a \emph{cancellative semiring} if whenever $ab=ac$ with $a\neq 0$ implies that $b=c$ for all $a,b,c \in A$.
		\end{enumerate}
	\end{mydef}
	
	Examples of zero-sum free semirings are the additively idempotent semirings. We give the three prominent examples of idempotent semifields. 
	\begin{myeg}[Boolean semifield]\label{example: boolean}
		Consider the set $\mathbb{B}:=\{0,1\}$ with multiplication:
		\[
		0\cdot 0 = 1 \cdot 0 = 0, \quad 1\cdot 1 =1, 
		\]
		and addition:
		\[
		1+1=1, \quad 1+0=1, \quad 0+0=0.
		\]
		The set $\mathbb{B}$ with these two operations is a semifield, called the \emph{Boolean semifield}. 
	\end{myeg}
	
	\begin{myeg}[Tropical semifield]\label{example: tropical semifield}
		Consider the set $\mathbb{T}:=\mathbb{R}\cup \{-\infty\}$ with multiplication $\odot$ given by the usual addition of real numbers with $a\odot (-\infty)=-\infty$ for all $a \in \mathbb{T}$. Addition $\oplus$ is given as follows: for $a,b \in \mathbb{T}$,
		\[
		a\oplus b := \max\{a,b\}, 
		\]
		where $-\infty$ is the smallest element. Then $\mathbb{T}$ is a semifield, called the \emph{tropical semifield}. 	
	\end{myeg}
	
	\begin{myeg}
		Let $R=\mathbb{T}[x]$ be the polynomial semiring with coefficients in $\mathbb{T}$ with the polynomial addition and multiplication. The geometry of $\Spec A=\mathbb{A}^1_{\mathbb{T}}$ is quite different from the classical affine line $\mathbb{A}^1_k$ over a field $k$ mainly due to the fact that $\mathbb{T}[x]$ is not cancellative although it does not have any zero-divisors. One may consider a ``reduced model'' $\overline{\mathbb{T}[x]}$, which is cancellative, to study the geometry of $\mathbb{A}^1_\mathbb{T}$. See \cite[\S 3.4]{giansiracusa2016equations} or \cite[\S 4]{jun2018valuations}.
	\end{myeg}

 \begin{rmk} The above example points out one of several stark differences between classical algebra and tropical algebra. In tropical algebra, the lack of zero-divisors does not imply that one can perform multiplicative cancellation.  \end{rmk}

	\begin{mydef}
		An \emph{affine semiring scheme} is the prime spectrum $X=\Spec A$ equipped with a structure sheaf $\mathcal{O}_X$. A locally semiringed space is a topological space with a sheaf of semirings such that the stalk at each point has a unique maximal ideal. A \emph{semiring scheme} is a locally semiringed space which is locally isomorphic to an affine semiring scheme. 
	\end{mydef}
	
	In \cite{giansiracusa2016equations} J. Giansiracusa and N. Giansiracusa propose a special case of the semiring schemes called $\mathbb{T}$-schemes, which are locally given by an equivalence relation, which we introduce bellow.
	
	\begin{mydef}
		Let $f \in \T[x_1,\dots, x_n]$. The \emph{bend relations} of $f$ is the set of equivalences $\{ f\sim f_{\hat{i}}\}$, where $f_{\hat{i}}$ is the polynomial $f$ after removing its $i$-th monomial. For an ideal $I \subseteq \T[x_1,\dots, x_n]$ the \emph{bend congruence} of $I$ is the congruence generated by the bend relations of all $f \in I$.
	\end{mydef}

 \begin{myeg}\label{ex:bend-rel} Let $f(x, y) = 1_{\mathbb{T}}x+1_{\mathbb{T}}y+1_{\mathbb{T}} \in \mathbb{T}[x,y]$, where the $+$ represents the addition operation in this semiring. To simplify the notation we will just write $f(x, y)= x+ y + 1$. The bend relations of $f(x,y)$ is the set of equivalences $\{f(x,y) \sim x+y \sim x+1 \sim y+1\}.$
 \end{myeg}

	\begin{mydef}
		A $\mathbb{T}$-scheme is a semiring scheme that is locally isomorphic to the prime spectrum of a quotient of $\T[x_1,\dots, x_n]$ by the bend congruence of an ideal in $\T[x_1,\dots, x_n]$. In this paper we refer to these schemes as \emph{tropical schemes}.
	\end{mydef}

	We point out that in \cite{maclagan2016tropical} the term ``tropical schemes'' is reserved for $\mathbb{T}$-schemes defined by the bend relations of special ideals, called tropical ideals.

 \begin{myeg}\label{ex:scheme-trop} Let $F(x,y) = x+ y + 1\in k[x,y]$, where $k$ is a field with the trivial valuation. The tropicalization of $F(x,y)$ is obtained by taking the valuation of each coefficient and replacing the multiplication and addition on $k$ with their tropical counterparts. We will write $f(x, y) = x+y+1 \in \mathbb{T}[x,y]$ for the tropicalization of $F(x,y)$, continuing the notation from Example~\ref{ex:bend-rel}. Let $C$ be the congruence generated by the bend relations of $f(x,y)$, that is $C_f = \left<f(x,y) \sim x+y \sim x+1 \sim y+1 \right>.$ The affine scheme that $F(x,y)$ defines is $\Spec k[x,y]/\left< F(x,y)\right>$ and its scheme-theoretic tropicalization is $\Spec \mathbb{T}[x,y]/C_f$, which is a $\mathbb{T}$-scheme or a tropical scheme.
 \end{myeg}

   \begin{rmk}
       Using the idea in Example~\ref{ex:scheme-trop}, one can obtain the scheme-theoretic tropicalization of any variety. More precisely, let $X$ is an affine variety over a valued field $k$ with defining ideal $I\subseteq k[x_1, \dots, x_n]$. The scheme-theoretic tropicalization of $X$ is the prime ideal spectrum of the semiring $\mathbb{T}[x_1, \dots, x_n]/C_{\text{trop}(I)}$, where $C_{\text{trop}(I)}$ is the bend congruence of the ideal trop$(I)$. The ideal trop$(I)$ is obtained by tropicalizing each polynomial of $I$, namely,       
       taking the valuation the coefficients and replacing the multiplication and addition on $k$ with their tropical counterparts. This construction is not limited to affine varieties. The details on how to glue affine patches can be found in \cite{giansiracusa2016equations}. 

       The scheme-theoretic tropicalization carries all the information that the set-theoretic (usual) tropicalization of a variety does. More precisely, the $\mathbb{T}$-points of the tropical scheme are exactly are the points of the tropical variety. 
   \end{rmk}

	\begin{mydef}\label{def: tropicalToricSheme}
		A \emph{tropical toric scheme} is a tropical scheme, locally isomorphic to the prime spectrum of $\T[M]$, where $M$ is a monoid such that $\Spec M$ is a toric monoid scheme. Alternatively, a tropical toric scheme is obtained from a toric monoid scheme via base change.
	\end{mydef}

 \begin{myeg} Let $L = \mathbb{T}[x, y]$. Then $\Spec L$ is a tropical toric scheme, which is the tropical analogue of the affine 2-space. In this case, $L = \mathbb{T}[M]$, where $M = \mathbb{N}^2 \cup \{0\}$.  
 \end{myeg}

	\section{Free modules and projective modules over semirings} \label{section: Free modules and projective modules over semirings} 
	
	In this section, we explore several technical properties of semirings which will be used in the subsequent sections to study vector bundles. We first study properties of zero-sum free semirings as well as idempotent pairs in semirings, and connect them to topological properties of prime spectra. We also study (locally) free modules and projective modules over semirings. 
	
	\begin{mydef}\label{definition: linearly independent}
		Let $R$ be a semiring and $M$ be an $R$-module. 
		\begin{enumerate}
			\item 
			A subset $\{x_1,\dots,x_n\} \subseteq M$ is \emph{linearly independent} if
			\[
			\sum_{i=1}^na_ix_i = \sum_{i=1}^n b_ix_i
			\]
			implies that $a_i=b_i$ for all $i=1,\dots,n$. 
			\item 
			$M$ is \emph{free of rank $n$} if there exists a set $\{x_1,\dots,x_n\} $ of linearly independent generators of $M$ over $R$. 
		\end{enumerate}
	\end{mydef}
	
	One can easily see that if $M$ is a free $R$-module of rank $n$, then $M\simeq R^n$ (as an $R$-module). In fact, if $\{x_1,\dots,x_n\}$ is a set of linearly independent generators of $M$, then one has the following morphism
	\[
	\varphi:R^n \to M, \quad (a_1,\dots,a_n) \mapsto \sum_{i=1}^na_ix_i
	\]
	which is bijective.

	\begin{rmk}\label{remark: free remark}
		Linear algebra over the tropical semifield $\mathbb{T}$ (or any idempotent semifield in general) is rather subtle. If one defines the notion of linearly independent as:  $\sum_{i=1}^na_ix_i=0$ implies that $a_i=0$ for all $i$ and a free module to be one generated by this, this module does not have to be of the form $\mathbb{T}^n$ for some $n \in \mathbb{N}$. 
	\end{rmk}
	
	Over a commutative ring $A$, the rank of a free module is well defined. However, over a non-commutative ring $R$, it is possible that $R^m \simeq R^n$ but $m \neq n$. Hence, we introduce the following definition for semirings. 
	
	\begin{mydef}
		We say a semiring R has the \emph{dimension uniqueness property} (DUP) if whenever there is an isomorphism $R^m\cong R^n$ then $m=n$ for positive integers $m,n$.
	\end{mydef}
	
	\begin{lem}\label{lemma: subtractive maximal no zero-divisor}
		Let $R$ be a semiring. If $\{0\}$ is a maximal saturated ideal of $R$, then $R$ has no zero-divisors (and consequently $R$ has no nilpotents and also $\Spec R$ is connected). 
	\end{lem}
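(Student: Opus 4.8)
The plan is to show that any nonzero zero-divisor in $R$ would give rise to a proper saturated ideal strictly larger than $\{0\}$, contradicting the hypothesis. The natural object to use is the annihilator: for $a \in R$, set $\mathrm{Ann}(a) := \{x \in R \mid ax = 0\}$.

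The first step is to verify that $\mathrm{Ann}(a)$ is always a saturated ideal of $R$. That it is an additive submonoid absorbing multiplication is immediate from distributivity and associativity of multiplication. For saturation, suppose $x \in \mathrm{Ann}(a)$ and $x + y \in \mathrm{Ann}(a)$; then $0 = a(x+y) = ax + ay = 0 + ay = ay$, so $y \in \mathrm{Ann}(a)$. I want to emphasize that this computation uses only the semiring axioms — in particular that $0$ is the additive identity — and not zero-sum freeness; subtraction is never invoked.

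Next, suppose toward a contradiction that $a, b \in R$ are both nonzero with $ab = 0$. Then $b \in \mathrm{Ann}(a) \setminus \{0\}$, so $\{0\} \subsetneq \mathrm{Ann}(a)$; moreover $1 \notin \mathrm{Ann}(a)$ since $a \cdot 1 = a \neq 0$, so $\mathrm{Ann}(a)$ is proper. Thus $\mathrm{Ann}(a)$ is a proper saturated ideal strictly containing $\{0\}$, contradicting the assumption that $\{0\}$ is a maximal saturated ideal. Hence $R$ has no zero-divisors.

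Finally I would deduce the parenthetical claims. The hypothesis forces $R \neq \{0\}$, since otherwise $\{0\} = R$ is not proper. If $a \neq 0$ but $a^n = 0$ for some $n \geq 1$, choose $n$ minimal; then $n \geq 2$ and $a \cdot a^{n-1} = 0$ with $a^{n-1} \neq 0$ by minimality, contradicting the absence of zero-divisors — so $R$ has no nonzero nilpotents. And $R \setminus \{0\}$ is nonempty (it contains $1 \neq 0$) and multiplicatively closed, so $\{0\}$ is a prime ideal of $R$; being contained in every prime ideal, the corresponding point is a generic point of $\Spec R$, whence $\Spec R$ is irreducible and in particular connected. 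I do not anticipate a serious obstacle anywhere; the only step that deserves a moment's care is the saturation of $\mathrm{Ann}(a)$, precisely because one cannot subtract, but the identity $0 + ay = ay$ together with distributivity settles it cleanly.
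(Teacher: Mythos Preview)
Your proof is correct and follows essentially the same approach as the paper: both show that $\mathrm{Ann}(a)$ is a saturated ideal (via the identity $ay = ax + ay = 0$), note it is proper when $a \neq 0$, and invoke maximality of $\{0\}$ to conclude $\mathrm{Ann}(a) = \{0\}$. You additionally spell out the parenthetical consequences about nilpotents and connectedness, which the paper leaves implicit.
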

	\begin{proof}
		For each $a \neq 0 \in R$, one can observe that $\textrm{Ann}(a)=\{x \in R \mid ax=0\}$ is a saturated ideal since $ax = 0$ and $ax + ay=0$ imply $ay = ax + ay = 0$. Since $a \neq 0$, the ideal $\textrm{Ann}(a)$ is proper. Since $\{0\} \subseteq \textrm{Ann}(a)$, from the maximality of $\{0\}$, we have that $\textrm{Ann}(a)=\{0\}$. In particular, $R$ has no zero-divisors. 
	\end{proof}
	
	The following proposition ensures that the rank of a free module over a semiring is well defined when it is finite. 
	
	\begin{pro}\label{proposition: DUP proposition}
		All semirings have the DUP. 
	\end{pro}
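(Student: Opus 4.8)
The plan is to reduce the statement to a tractable semiring and then use a known rank-invariance fact for that semiring. Suppose we have an isomorphism $R^m \cong R^n$ of $R$-modules for positive integers $m,n$. Base change everything along the unique semiring map $R \to \mathbb{B}$ that sends $0 \mapsto 0$ and every nonzero element to $1$ — this map exists because every semiring receives $\mathbb{N}$ and surjects onto $\mathbb{B}$ via the "is it zero or not" map, and it is compatible with addition and multiplication precisely because $\mathbb{B}$ collapses all nonzero information. Applying $- \otimes_R \mathbb{B}$ to the given isomorphism yields an isomorphism $\mathbb{B}^m \cong \mathbb{B}^n$ of $\mathbb{B}$-modules. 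So it suffices to prove that $\mathbb{B}$ has the DUP.

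To prove $\mathbb{B}$ has the DUP, I would count: $\mathbb{B}^m$ is a finite set with exactly $2^m$ elements, so an isomorphism (in particular a bijection) $\mathbb{B}^m \cong \mathbb{B}^n$ forces $2^m = 2^n$, hence $m = n$. This is the entire content for $\mathbb{B}$, and it is genuinely elementary — the pointed-set structure already does all the work, no addition needed. Alternatively, and perhaps more in the spirit of the paper given the emphasis on unique bases over $\mathbb{B}$ (Example \ref{example: basis over B}, Proposition \ref{proposition: unique basis}), one could invoke that every free $\mathbb{B}$-module has a unique basis, so the basis of $\mathbb{B}^m$ has $m$ elements and is an isomorphism invariant; but the cardinality argument is cleaner and self-contained.

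The only step that requires a moment's care is checking that the base-change functor $- \otimes_R \mathbb{B}$ sends $R^m$ to $\mathbb{B}^m$ and sends isomorphisms to isomorphisms — this is formal (tensor product is a left adjoint, hence preserves colimits and in particular finite free modules and all isomorphisms), and the existence of the homomorphism $R \to \mathbb{B}$ is immediate from the definition of a semiring since $\{0\}$ together with $R \setminus \{0\}$ are respected by $+$ and $\cdot$ (the sum or product lands in $\{0\}$ only if forced, and $\mathbb{B}$ is built to record exactly that). I do not anticipate any real obstacle here; the one thing to state explicitly is why the map $R \to \mathbb{B}$ is well-defined, namely that $a + b = 0$ or $ab = 0$ does not force $a = 0$ in a general semiring but the target $\mathbb{B}$ satisfies $1 + 1 = 1$ and $1 \cdot 1 = 1$ so the fibers are preserved regardless. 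With that in hand the proof is three lines: build $R \to \mathbb{B}$, base change to get $\mathbb{B}^m \cong \mathbb{B}^n$, count to conclude $m = n$.
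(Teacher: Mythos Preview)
Your proof has a genuine gap: the map $R \to \mathbb{B}$ sending every nonzero element to $1$ is \emph{not} a semiring homomorphism in general. Take $R = \mathbb{Z}$: if $a = 1$ and $b = -1$ then $a + b = 0$ must map to $0$, but $a \mapsto 1$ and $b \mapsto 1$ so additivity would force $a+b \mapsto 1+1 = 1$. More generally, the ``is it zero or not'' map respects addition precisely when $R$ is zero-sum free, and respects multiplication precisely when $R$ has no zero-divisors. Your final paragraph gestures at this issue but the justification there is incorrect: the point is not whether $\mathbb{B}$ satisfies $1+1=1$, but whether the preimage of $0$ is closed under the operations, and in a ring with $-1$ it is not.

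The paper's proof does use your counting argument $|\mathbb{B}^m| = 2^m$ as the endgame, but only after two reductions you are missing. First it quotients by a maximal saturated ideal, so that $\{0\}$ becomes maximal saturated and hence (by Lemma~\ref{lemma: subtractive maximal no zero-divisor}) $R$ has no zero-divisors. Then it splits into cases: if $R$ is not zero-sum free, one localizes at a nonzero $x$ with $x+y=0$ to produce a genuine ring $R_x$ (where $1 + y/x = 0$), and invokes the classical DUP for commutative rings; if $R$ is zero-sum free, then---now that zero-divisors are gone---your map to $\mathbb{B}$ really is a homomorphism and your cardinality argument finishes. So your strategy is the right one for half the cases, but you need the reduction steps to get there, and a completely separate argument (passing to an honest ring) for the other half.
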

	\begin{proof}
		Suppose that $R$ is a semiring which does not satisfy the DUP. Choose positive integers $m \neq n$ such that $R^m \simeq R^n$. If we have a homomorphism $f:R \to A$ for a semiring $A$, then $A$ does not have the DUP since otherwise we could tensor the isomorphism $R^m \simeq R^n$ with $A$ and apply the DUP in $A$ to obtain $m=n$. Let $A=R/I$, where $I$ a maximal saturated ideal. It follows that $R/I$ does not have the DUP. Therefore, without loss of generality, we may assume that the zero ideal is a maximal saturated ideal of $R$. In particular, from Lemma \ref{lemma: subtractive maximal no zero-divisor}, we may assume that $R$ has no zero-divisors. 
		
		We first consider the case when $R$ is not zero-sum free. Choose $x \neq 0$ and $y$ so that $x+y=0$.  Since there are no zero divisors, $x$ is not nilpotent, so it makes sense to localize at $x$.  The localization $R_x$ must lack the DUP due to the existence of a homomorphism $R\rightarrow R_x$. On the other hand, in $R_x$, we have 
		\[
		1 + \frac{y}{x}=0.
		\]
		Therefore $R_x$ is indeed a ring. In fact, the zero ideal is maximal among saturated ideals of $R_x$, so $R_x$ is a field.  Thus $R_x^m$ is an m-dimensional vector space over itself. The DUP property in this case is well-known, two vector spaces over the same field are isomorphic if and only if they have the same dimension. So we get a contradiction in this case.
		
		Next, suppose that $R$ is zero-sum free. Since $R$ has no zero-divisors, there is a semiring homomorphism $R \to \mathbb{B}$ sending all nonzero elements to $1$, so we get an isomorphism $\mathbb{B}^m\cong \mathbb{B}^n$.  Then we have $2^m=2^n$ so we can obtain $m=n$, and hence we also get a contradiction. 
	\end{proof}

	One can also prove a more general version of Proposition \ref{proposition: DUP proposition} allowing $n$ and $m$ to be infinite.  In fact the only place we used the finiteness assumption was for the case $R=\mathbb{B}$, which is dealt with in more generality in Example \ref{example: basis over B} below.  For completeness, we also mention that the proof of Proposition \ref{proposition: DUP proposition} shows that every semiring admits a homomorphism to a field or to $\mathbb{B}$.

	\begin{mydef}\label{definition: idempotent pair}
		Let $R$ be a semiring. 
		\begin{enumerate}
			\item 
			By an \emph{idempotent pair} of $R$, we mean a pair $(e,f)$ of elements in $R$ such that $ef=0$ and $e+f=1$. 
			\item 
			An idempotent pair $(e,f)$ is said to be nontrivial if $e,f \not \in \{0,1\}$. 
		\end{enumerate}
	\end{mydef}
	
	\begin{rmk}\label{rem: mul-idempotent}
    If $(e,f)$ is an idempotent pair, then one has $e(e+f)=e$, and hence $e^2=e$ and similarly $f^2=f$. In particular, $e$ and $f$ are multiplicatively idempotent and $e^k = e$, $f^k = f$ for all $k \in \mathbb{N}$. 
	\end{rmk}

	The intuition for working with idempotent pairs comes from direct sum decompositions. In ring theory, a decomposition of a module $M$ as a direct sum is equivalent to a choice of idempotent inside $\textrm{End}(M)$ (the idempotent serves as the projection operator onto the first summand).  This is false over semirings because split exact sequences do not give direct sum decompositions in this case.  The key issue is that in the case of rings, for any idempotent $e$, we have another idempotent $1-e$ which serves as the projection onto the second summand.  For semirings, we need to work with a pair of idempotents since we cannot recover the second projection operator from the first. 
	
	\begin{lem}Let $R$ be a semiring and $N$ be the nilradical of $R$.  Then $N$ is a saturated ideal.
	\end{lem}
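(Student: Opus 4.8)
The plan is to verify two things: that $N$ is an ideal, and that it is saturated, the second being the only real content. Checking that $N$ is an ideal is routine and I would do it quickly: $0 \in N$; if $a^m = 0$ and $b^n = 0$, then the binomial expansion $(a+b)^{m+n-1} = \sum_{k=0}^{m+n-1} \binom{m+n-1}{k} a^k b^{m+n-1-k}$ (valid in any commutative semiring, with $\binom{m+n-1}{k}$ read as that many copies of $1$) has every monomial killed because in each term either $k \ge m$ or $m+n-1-k \ge n$, so $a+b \in N$; and $(ra)^m = r^m a^m = 0$ gives $NR \subseteq N$. (Alternatively one can note $N = \sqrt{(0)}$ and cite that radicals of ideals are ideals.)

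For saturation, I would take $x \in N$ and $x + y \in N$, say with $x^m = 0$ and $(x+y)^n = 0$, and aim to show $y \in N$. The first move is to expand $(x+y)^n = \sum_{k=0}^n \binom{n}{k} x^k y^{n-k} = 0$ and separate off the $k = 0$ term, writing $y^n + S = 0$ with $S = \sum_{k=1}^n \binom{n}{k} x^k y^{n-k}$. Every summand of $S$ is a multiple of $x^k$ with $k \ge 1$, hence nilpotent, so $S \in N$ because $N$ is an ideal. This reduces the problem to the sublemma: \emph{if $a + b = 0$ in a semiring and $b$ is nilpotent, then $a$ is nilpotent}; applying it with $a = y^n$ and $b = S$ gives $y^n \in N$ and therefore $y \in N$.

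To prove the sublemma I would argue as follows. Suppose $a + b = 0$ and $b^p = 0$. I claim $a^j b^{p-j} = 0$ for every $j$ with $0 \le j \le p$, and prove it by induction on $j$: the base case $j = 0$ is $b^p = 0$, and for the step I multiply $a + b = 0$ by $a^j b^{p-j-1}$ to get $a^{j+1} b^{p-j-1} + a^j b^{p-j} = 0$, whose second term vanishes by the inductive hypothesis, leaving $a^{j+1} b^{p-j-1} = 0$. Setting $j = p$ gives $a^p = 0$, as wanted.

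The one genuinely delicate point is the sublemma, where the naive ``subtract $x$'' argument from the ring case is unavailable; the telescoping trick of multiplying the relation $a+b=0$ by $a^j b^{p-j-1}$ is what replaces it. Beyond that I would only need to be a little careful that the binomial theorem is legitimate over a commutative semiring and that the degenerate cases (the zero semiring, or nilpotence exponent $1$) cause no trouble.
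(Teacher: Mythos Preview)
Your proof is correct and follows essentially the same route as the paper's: both reduce saturation to the sublemma ``if $a+b=0$ and $b$ is nilpotent then $a$ is nilpotent,'' prove it by the same telescoping induction (multiply $a+b=0$ by $a^j b^{p-j-1}$), and then apply it after expanding $(x+y)^n$ binomially and separating off the pure-power term. The only cosmetic difference is that the paper factors the remaining terms as $y\cdot p(x,y)$ (a single multiple of the nilpotent element) rather than invoking closure of $N$ under addition as you do, and the paper omits the routine verification that $N$ is an ideal.
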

	\begin{proof}
		First note that if $x+y=0$ and $x^{i-1}y^{j+1}=0$ then we obtain $x^i y^j = 0$ after multiplying $x+y=0$ by $x^{i-1}y^j$.  A simple inductive argument using this fact shows that if $x+y=0$ and $y$ is nilpotent, then $x$ is nilpotent.  Now suppose $x+y$ and $y$ are nilpotent.  Then there exists some $n$ and $p(x,y)$ such that
		\[ 0 = (x+y)^n = x^n + y p(x, y) \]
		Furthermore, $y p(x, y)$ is nilpotent, so $x^n$ (and hence $x$) is nilpotent.
	\end{proof}
	
	\begin{lem}\label{lemma: imdepotent pair lifting}
		Let $R$ be a semiring and $N$ be the nilradical of $R$. Any idempotent pair of the quotient $R/N$ can be lifted to an idempotent pair of $R$. 
	\end{lem}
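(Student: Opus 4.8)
The plan is to reduce the problem to the case where a single pair of lifts already sums to $1$, and then to imitate the classical ``lifting idempotents modulo a nil ideal'' argument.

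Write $\pi\colon R\to\bar R:=R/N$ for the quotient map and $\bar g:=\pi(g)$. Because $N$ is a saturated ideal (the previous lemma), it is precisely the ideal-kernel of $\pi$, so $\bar g=\bar 0\iff g\in N$; in particular $\bar R$ has no nonzero nilpotents. Fix an idempotent pair $(\bar e,\bar f)$ of $\bar R$: then $\bar e,\bar f$ are idempotents, $\bar e\bar f=\bar 0$, $\bar e+\bar f=\bar 1$, and $\bar R\cong\bar R\bar e\times\bar R\bar f$ via $\bar r\mapsto(\bar r\bar e,\bar r\bar f)$.

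\smallskip
\textbf{Step 1 (the crux): choosing lifts that sum to $1$.} I claim there are lifts $e_0$ of $\bar e$ and $f_0$ of $\bar f$ with $e_0+f_0=1$ in $R$. Equivalently, with $\mathfrak a:=\pi^{-1}(\bar R\bar f)$ and $\mathfrak b:=\pi^{-1}(\bar R\bar e)$ --- both \emph{saturated} ideals of $R$, since $\bar R\bar f$ and $\bar R\bar e$ are the ideal-kernels of the coordinate projections of $\bar R$ and preimages of saturated ideals are saturated --- one must prove $\mathfrak a+\mathfrak b=R$. Here $\mathfrak a\cap\mathfrak b=\pi^{-1}(\bar R\bar e\cap\bar R\bar f)=\pi^{-1}(\bar 0)=N$, and $\pi(\mathfrak a+\mathfrak b)=\bar R\bar f+\bar R\bar e=\bar R$; the latter only forces $1+j\in\mathfrak a+\mathfrak b$ for some $j\in N$, and the point is to upgrade this to $1\in\mathfrak a+\mathfrak b$. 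Granting $\mathfrak a+\mathfrak b=R$, write $1=u+v$ with $u\in\mathfrak a$, $v\in\mathfrak b$; then $\bar u\in\bar R\bar f$, $\bar v\in\bar R\bar e$ and $\bar u+\bar v=\bar 1$, so the product decomposition of $\bar R$ forces $\bar v=\bar e$ and $\bar u=\bar f$. Thus $e_0:=v$, $f_0:=u$ are lifts of $\bar e,\bar f$ with $e_0+f_0=1$.

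\smallskip
\textbf{Step 2 (splitting; routine).} Since $\overline{e_0f_0}=\bar e\bar f=\bar 0$, the element $e_0f_0$ is nilpotent, say $(e_0f_0)^{n}=0$. Expand $1=(e_0+f_0)^{m}=\sum_{k}\binom{m}{k}e_0^{k}f_0^{m-k}$ for $m\ge 2n-1$. Every summand with $\min(k,m-k)\ge n$ is a multiple of $(e_0f_0)^{n}=0$, so $1=A+B$, where $A$ is the sum of the summands with $k\ge m-n+1$ (including $e_0^{m}$) and $B$ the sum of those with $k\le n-1$ (including $f_0^{m}$). Then $A$ is divisible by $e_0^{\,m-n+1}$ and $B$ by $f_0^{\,m-n+1}$, so $AB$ is divisible by $(e_0f_0)^{m-n+1}=0$; hence $AB=0$, and with $A+B=1$ we get $A=A(A+B)=A^{2}+AB=A^{2}$, likewise $B^{2}=B$, so $(A,B)$ is an idempotent pair of $R$. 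Reducing mod $N$: each ``mixed'' summand of $A$ carries the factor $\bar e\bar f=\bar 0$, so $\bar A=\overline{e_0^{m}}=\bar e^{\,m}=\bar e$, and similarly $\bar B=\bar f$. Hence $(A,B)$ lifts $(\bar e,\bar f)$, as required.

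\smallskip
The step I expect to be the real obstacle is Step 1 --- showing $\mathfrak a+\mathfrak b=R$, equivalently that one can choose lifts summing to $1$ on the nose. The subtlety is the usual semiring pathology: ``$\bar e+\bar f=\bar 1$'' a priori only yields lifts with $e_0+f_0+a=1+b$ for some $a,b\in N$, and since $1+b$ need not be a unit one cannot simply cancel it away. It is exactly the saturatedness of the nilradical (the previous lemma) that should rescue this: any \emph{saturated} ideal containing $\mathfrak a+\mathfrak b$ must contain some $1+j$ with $j\in N$ together with $j$ itself, hence must contain $1$; the remaining task is to see that this, combined with the saturatedness of $\mathfrak a$ and $\mathfrak b$, forces $\mathfrak a+\mathfrak b=R$.
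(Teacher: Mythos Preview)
Your Step 2 is correct and is the classical lifting argument. The genuine gap is Step 1, exactly as you flag, and your proposed resolution does not go through: the sum of two saturated ideals in a semiring need \emph{not} be saturated (for instance, in $\mathbb{N}$ the ideals $(2)$ and $(3)$ are both saturated, yet $(2)+(3)=\mathbb{N}\setminus\{1\}$ is not), so knowing that every saturated ideal containing $\mathfrak a+\mathfrak b$ equals $R$ does not force $\mathfrak a+\mathfrak b=R$. The individual saturatedness of $\mathfrak a$ and $\mathfrak b$ buys you nothing for their sum, so the argument sketched in your final paragraph stalls.

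The paper sidesteps this difficulty by reversing the order of your two steps. Take \emph{any} lifts $e,f$ of $\bar e,\bar f$; since $ef\in N$, choose $k$ with $e^kf^k=0$. One still has $e^k+f^k\equiv e+f\equiv 1$ modulo $N$, and the paper argues that $e^k+f^k$ is a unit: were it a nonunit it would lie in some prime $\mathfrak p\supseteq N$, forcing $0\equiv 1$ modulo $\mathfrak p$. Letting $u$ be its inverse, one has $ue^k+uf^k=1$ and $(ue^k)(uf^k)=u^2e^kf^k=0$, so $(ue^k,uf^k)$ is already an idempotent pair --- no binomial expansion needed. Since $e^k+f^k\equiv 1$ modulo $N$ one gets $u\equiv 1$ modulo $N$, whence $ue^k\equiv e$ and $uf^k\equiv f$, so this pair lifts $(\bar e,\bar f)$.

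Observe that the paper's unit argument would in fact complete your Step 1 (since then $1=ue^k+uf^k\in\mathfrak b+\mathfrak a$), but once you have it your binomial Step 2 is redundant: the pair $(ue^k,uf^k)$ already has product zero. The moral is that killing the product \emph{first} (by passing to $k$th powers) and only then fixing the sum is more efficient than your order; trying to force the sum to equal $1$ before controlling the product is precisely what lands you in the saturated-sum trap.
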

	\begin{proof}
		First note that since the nilradical is saturated, reducing modulo the nilradical makes sense. Let $(e,f)$ be a pair of elements of $R$ which become an idempotent pair in $R/N$. Since $ef \in N$, we can choose $k \in \mathbb{N}$ such that $e^kf^k=0$. Let $\bar{e}$ and $\bar{f}$ denote the equivalence classes of $e$ and $f$ in $R/N$ respectively. Then by Remark~\ref{rem: mul-idempotent} and Definition~\ref{definition: idempotent pair} we have
		\[
		\bar{e}^k+\bar{f}^k = \bar{e}+\bar{f} = 1.
		\]
		If $e^k+f^k$ were a nonunit, it would be contained in a prime ideal, and we would have $0 \equiv 1$ modulo that prime.  So by contradiction, it is a unit and we may choose $u \in R$ such that $ue^k + uf^k = 1$.  We also have $u^2 e^k f^k = u^2 0 = 0$. This is enough to imply $(u e^k, uf^k)$ is an idempotent pair (that each element of the pair is idempotent follows from the other two axioms).  Furthermore, since $e^k + f^k \equiv 1$ mod $N$, it follows that $u \equiv 1$ mod $N$ and $ue^k \equiv e^k \equiv e$ mod $N$.  A similar argument holds for $f$, so our idempotent pair is congruent to the original pair mod $N$. This shows any idempotent pair in $R/N$ can be lifted to one in $R$.
	\end{proof}
	
	\begin{pro}\label{proposition: connected implies no idempotet paris}
		Let $R$ be a semiring, $\Spec R$ is connected if and only if any idempotent pair of $R$ is trivial.
	\end{pro}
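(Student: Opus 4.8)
The plan is to prove the contrapositive: if $R$ has a nontrivial idempotent pair, then $\Spec R$ is disconnected. So suppose $(e,f)$ is an idempotent pair with $e,f \notin \{0,1\}$. Recall from the discussion after Definition \ref{definition: idempotent pair} that $e$ and $f$ are multiplicatively idempotent, $ef=0$, and $e+f=1$. The idea is entirely parallel to the classical ring-theoretic fact that a nontrivial idempotent splits $\Spec$ into two pieces, with the caveat that over semirings we must carry the \emph{pair} $(e,f)$ rather than just $e$ and its complement $1-e$.

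First I would show $\Spec R = V(e) \cup V(f)$. Indeed, if $\mathfrak{p}$ is a prime ideal, then $ef = 0 \in \mathfrak{p}$, and since $R \setminus \mathfrak{p}$ is multiplicative (hence $ef \notin R\setminus\mathfrak p$ whenever $e,f\notin\mathfrak p$... more simply: a prime ideal cannot omit both factors of a product lying in it), we get $e \in \mathfrak{p}$ or $f \in \mathfrak{p}$; that is, $\mathfrak{p} \in V(e)$ or $\mathfrak{p} \in V(f)$. Next I would show $V(e) \cap V(f) = \emptyset$: if $e, f \in \mathfrak{p}$ then $1 = e + f \in \mathfrak{p}$ (here we use that $\mathfrak p$ is an additive submonoid, so it is closed under the sum $e+f$), contradicting that $\mathfrak{p}$ is proper. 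Thus $\Spec R = V(e) \sqcup V(f)$ is a disjoint union of two closed sets, equivalently of two open sets ($V(e) = D(f)$ on the complement, etc.), so to conclude disconnectedness it remains to check both pieces are nonempty.

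This nonemptiness is the one genuinely delicate point, since over a semiring an element being $\neq 0,1$ does not obviously prevent it from lying in every prime. I would argue: $V(e) = \emptyset$ would force $e$ to lie in no prime ideal, hence $e$ is not contained in any maximal ideal, hence $e$ is a unit; but then from $ef = 0$ and $e$ a unit we get $f = 0$, contradicting $f \notin \{0,1\}$. Symmetrically $V(f) = \emptyset$ forces $e = 0$, again a contradiction. (The step "contained in no prime $\Rightarrow$ unit" uses the standard fact that a proper ideal — in particular the principal ideal generated by a non-unit — is contained in a maximal, hence prime, ideal; one must check a maximal ideal is prime in the semiring setting, which follows from the existence of the quotient by a maximal ideal being a semifield, or can be invoked directly from the preliminaries.) Therefore both $V(e)$ and $V(f)$ are nonempty closed-and-open proper subsets, so $\Spec R$ is disconnected, completing the contrapositive.

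The main obstacle I anticipate is purely the bookkeeping around "non-unit implies contained in a prime" in the semiring world and making sure the clopen decomposition $V(e) = D(f)$ is justified from $e + f = 1$ and $ef = 0$ (one wants $D(f) \subseteq V(e)$ from $ef=0$ and $V(e)\subseteq D(f)$ from $e+f=1$, so that $V(e)=D(f)$ is simultaneously closed and open); everything else is a direct transcription of the classical argument, using the pair $(e,f)$ in place of $(e,1-e)$. If desired, this could also be phrased via Lemma \ref{lemma: imdepotent pair lifting} to first reduce to the reduced case, but that reduction is not needed here.
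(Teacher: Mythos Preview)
Your argument is correct and is essentially the same as the paper's: both decompose $\Spec R$ as $V(e)\sqcup V(f)$ using $ef=0$ and $e+f=1$, and both deduce from $V(e)=\emptyset$ that $e$ is a unit and hence $f=0$, $e=1$. The only cosmetic difference is that you phrase it as the contrapositive (a nontrivial pair forces disconnectedness) while the paper argues directly (connectedness forces one of $V(e),V(f)$ empty, hence the pair is trivial); your extra remarks about $V(e)=D(f)$ and about non-units lying in primes are sound and do not deviate from the paper's route.
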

	\begin{proof}
		Suppose that $\Spec R$ is connected. Denote by $V(e)$ the set of prime ideals containing $e$. Let $(e,f)$ be an idempotent pair, and consider the closed sets $V(e)$ and $V(f)$. Since $ef=0$, every prime ideal of $R$ is in either $V(e)$ or $V(f)$. Since $e+f=1$, no prime ideal can be in both $V(e)$ and $V(f)$. By connectedness, one of these sets is empty; without loss of generality let's say it's $V(e)$. Then $e$ is a unit, and so $ef=0$ implies $f=0$.  Finally $e+f=1$ implies $e=1$, showing that $(e,f)$ is trivial. 
		
		For the second assertion, suppose, on the contrary, that $\Spec R$ is disconnected. Let $N$ be the nilradical of $R$. Choose radical ideals $I$ and $J$ such that $V(I)$ and $V(J)$ form a disjoint cover of $\Spec R$ by closed sets.  No prime ideal contains both $I$ and $J$, so $I+J = R$.  Choose $e \in I$ and $f \in J$ such that $e + f = 1$.  Every prime ideal contains $I$ or $J$, so $IJ$ is a subset of $N$.  Thus $ef \equiv 0 \mod N$.  This shows that $(e, f)$ becomes an idempotent pair when reduced modulo $N$.  From Lemma \ref{lemma: imdepotent pair lifting}, we can construct an idempotent pair $(e', f')$ whose image in $R/N$ is same as the image of $(e,f)$.  Since $e$ and $e'$ are congruent mod $N$ and $J$ is radical, they are congruent mod $J$, so $e'$ is nonzero.  Similarly $f'$ is nonzero, so the idempotent pair is nontrivial, giving us a contradiction.
	\end{proof}
	
	One is tempted to say that $\Spec S$ is connected if and only if $S$ does not have any idempotent element. However, this is not true for semirings as the following example shows. 
	
	\begin{myeg}
		Let $S=\mathbb{B}[x]/\langle x^2=x \rangle$. One can easily check that there are no zero-divisors (there are 4 elements, so brute force suffices).  So $\Spec S$ is irreducible, and hence connected.  But $\bar{x} \in S$ is an idempotent.	
	\end{myeg}

	\begin{myeg}
		Let $R$ be a semiring without zero-divisors. Then $\Spec R$ is irreducible, and hence connected. It follows that $R$ has only trivial idempotent pairs.  Alternatively, one can note that for an idempotent pair $(e,f)$, the equation $ef=0$ implies $e=0$ or $f=0$ and the equation $e+f=1$ implies the other entry of the pair is $1$.
	\end{myeg}

	The following is well known for idempotent semirings. We prove the case when $R$ is a zero-sum free semiring. We note that any idempotent semiring is a zero-sum free.
	
	\begin{lem}\label{lemma: zero-sum trivial idempotent pairs}
		Let $R$ be a semiring.
		\begin{enumerate}
			\item 
			Let $S$ be a multiplicative subset of $R$. If $R$ is zero-sum free, then $S^{-1}R$ is also zero-sum free. 
			\item 
			If $R_\mathfrak{p}$ is zero-sum free for each $\mathfrak{p} \in \Spec R$, then $R$ is zero-sum free. 
		\end{enumerate}
	\end{lem}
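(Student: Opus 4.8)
The plan is to prove the two parts separately; part~(1) is a direct computation with the fraction description of the localization, and part~(2) is an annihilator-ideal argument.

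For part~(1), I would work with the explicit model of $S^{-1}R$ whose elements are fractions $a/s$ with $a\in R$, $s\in S$, where $a/s=b/t$ iff $uta=usb$ for some $u\in S$, with zero element $0/1$, addition $a/s+b/t=(ta+sb)/(st)$, and multiplication $(a/s)(b/t)=(ab)/(st)$. Suppose $a/s+b/t=0/1$ in $S^{-1}R$. Then $(ta+sb)/(st)=0/1$, so there is $u\in S$ with $u(ta+sb)=0$, i.e.\ $uta+usb=0$ in $R$. Since $R$ is zero-sum free this forces $uta=0$ and $usb=0$, and then $a/s=(uta)/(uts)=0/(uts)=0/1$, and likewise $b/t=0/1$. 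Hence $S^{-1}R$ is zero-sum free. This part is pure bookkeeping with the fraction identities.

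For part~(2), assume $a+b=0$ in $R$; by symmetry it suffices to show $a=0$. Consider $\textrm{Ann}(a)=\{x\in R\mid xa=0\}$, which is an ideal of $R$ (it is an additive submonoid, and closed under multiplication by $R$). If $\textrm{Ann}(a)\neq R$ then it is a proper ideal, hence contained in some $\mathfrak{p}\in\Spec R$. Localizing at $\mathfrak{p}$, the relation $a+b=0$ gives $a/1+b/1=0$ in $R_\mathfrak{p}$, so by the hypothesis that $R_\mathfrak{p}$ is zero-sum free we get $a/1=0$, i.e.\ $sa=0$ for some $s\in R\setminus\mathfrak{p}$. But then $s\in\textrm{Ann}(a)\subseteq\mathfrak{p}$, a contradiction. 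Therefore $\textrm{Ann}(a)=R$, so $1\cdot a=0$ and $a=0$, which proves $R$ is zero-sum free.

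The only step needing a little care is the fact invoked in part~(2) that a proper ideal of a commutative semiring is contained in a prime ideal; I would either cite a standard reference or include the short argument: by Zorn's lemma (a union of a chain of proper ideals still omits $1$, hence is proper) our ideal is contained in an ideal $\mathfrak{p}$ maximal among proper ideals, and such a $\mathfrak{p}$ is prime --- if $xy\in\mathfrak{p}$ with $x,y\notin\mathfrak{p}$, then $\mathfrak{p}+(x)=\mathfrak{p}+(y)=R$, and writing $1=p_1+r_1x$, $1=p_2+r_2y$ with $p_i\in\mathfrak{p}$ and multiplying gives $1\in\mathfrak{p}$, a contradiction. Apart from this, everything is routine, and I do not expect any genuine obstacle beyond keeping the localization formulas straight.
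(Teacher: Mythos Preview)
Your proof is correct and follows essentially the same approach as the paper: a direct fraction computation for part~(1), and the annihilator-ideal argument for part~(2) (the paper phrases it as a contradiction from $a\neq 0$, but the logic is identical). Your added justification that a proper ideal lies in a prime ideal is a nice touch; the paper simply invokes a maximal ideal containing $I$ without further comment.
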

	\begin{proof}
		For $\frac{a}{s_1},\frac{b}{s_2} \in S^{-1}R$, if $\frac{a}{s_1}+\frac{b}{s_2}=0$ then there exists $q \in S$ such that $qas_2 +qbs_1=0$. Since $R$ is zero-sum free, we have that $(qs_2)a=0$ and $(qs_1)b=0$. In particular, $\frac{a}{s_1}=\frac{b}{s_2}=0$.  
		
		For the second assertion, suppose that $a+b=0$ and $a \neq 0$. Let $I = \{x \mid ax=0\}$. Since $a \neq 0$, $I$ is a proper ideal. Let $\mathfrak{p}$ be a maximal ideal containing $I$. By the zero-sum-free property of $R_\mathfrak{p}$, we have $\frac{a}{1}=0$. Hence, there is $t \not \in \mathfrak{p}$ such that $ta=0$. This gives a contradiction since then we have $t \in I \subseteq \mathfrak{p}$. Therefore, $a=0$.  
	\end{proof}

	One may use the notion of idempotent pairs to prove that when $R$ is a zero-sum free semiring with only trivial idempotent pairs, a basis of a free module over $R$ is unique up to rescaling and permutation.  Since a basis provides a direct sum decomposition of a free module, the following lemma is the key tool.  Note that this lemma first appeared in \cite[Theorem 3.2]{izhakian2018decompositions}.
	
	\begin{lem}\label{lemma: direct summand of free module}
		Let $R$ be a zero-sum free semiring with only trivial idempotent pairs, and $M$ be a free module over $R$ with basis $S\subseteq M$.  Suppose $M=P\oplus Q$.  Then there exists a subset $S'\subseteq S$ such that $P$ is free with basis $S'$ and $Q$ is free with basis $S \setminus S'$.
	\end{lem}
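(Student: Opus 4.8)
The plan is to extract from the internal direct sum $M = P \oplus Q$ the two projection endomorphisms of $M$ and analyze how they act on the basis $S$; zero-sum freeness will force these projections to act diagonally on $S$, and the diagonal coefficients will turn out to form idempotent pairs, which the hypothesis then trivializes.

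\textbf{Step 1 (projections).} Since $M = P\oplus Q$ is an internal direct sum, every $m\in M$ has a unique expression $m = m_P + m_Q$ with $m_P\in P$, $m_Q\in Q$, and the assignments $\pi_P(m)=m_P$, $\pi_Q(m)=m_Q$ are $R$-module endomorphisms of $M$ with $\pi_P+\pi_Q=\mathrm{id}_M$, $\pi_P\circ\pi_Q=\pi_Q\circ\pi_P=0$, and $\pi_P^2=\pi_P$, $\pi_Q^2=\pi_Q$, all immediate from uniqueness of the decomposition.

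\textbf{Step 2 (projections act diagonally on $S$).} Fix $s\in S$ and expand $\pi_P(s)=\sum_{t\in S}a_t t$ and $\pi_Q(s)=\sum_{t\in S}b_t t$ as finitely supported sums. From $\pi_P(s)+\pi_Q(s)=s$ and the linear independence of $S$ we get $a_t+b_t=0$ for $t\neq s$ and $a_s+b_s=1$. Because $R$ is zero-sum free, $a_t=b_t=0$ for all $t\neq s$, so $\pi_P(s)=e_s s$ and $\pi_Q(s)=f_s s$ with $e_s:=a_s$, $f_s:=b_s$ satisfying $e_s+f_s=1$. This is the step where zero-sum freeness is essential.

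\textbf{Step 3 (splitting of $S$).} Applying $\pi_P\circ\pi_Q=0$ to $s$ and reading off the coefficient of $s$ gives $e_sf_s=0$; together with $e_s+f_s=1$ this says $(e_s,f_s)$ is an idempotent pair of $R$, hence trivial by hypothesis: either $e_s=1,\,f_s=0$ or $e_s=0,\,f_s=1$. Put $S':=\{s\in S: e_s=1\}$. Then $s\in S'$ forces $s=\pi_P(s)\in P$, and $s\in S\setminus S'$ forces $s=\pi_Q(s)\in Q$. For generation: given $p\in P$, write $p=\sum_{s\in S}c_s s$; applying $\pi_P$ and using $\pi_P(s)=e_s s$ yields $p=\pi_P(p)=\sum_{s\in S'}c_s s$, so $S'$ generates $P$, and symmetrically $S\setminus S'$ generates $Q$. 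Finally, any subset of the linearly independent set $S$ is linearly independent (pad the missing coefficients with $0$), so $S'$ is a basis of $P$ and $S\setminus S'$ a basis of $Q$.

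\textbf{Main obstacle.} The argument is essentially forced once the projections are restricted to $S$; the genuinely load-bearing points are Step 2, where zero-sum freeness pins $\pi_P(s)$ down to a scalar multiple of $s$, and the passage in Step 3 to an idempotent pair, which the ``only trivial idempotent pairs'' hypothesis kills. The only mild care needed is the bookkeeping when $S$ is infinite, but since every sum in sight is finitely supported this is harmless.
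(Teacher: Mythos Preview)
Your proof is correct and follows essentially the same approach as the paper: both extract the pair of projection endomorphisms, use zero-sum freeness to force them to act diagonally on the basis $S$, observe that the diagonal coefficients form idempotent pairs in $R$, and then invoke the triviality hypothesis to split $S$. The paper packages this via the matrix semiring $M_S(R)\cong\mathrm{End}(M)$ while you work coefficient-by-coefficient, but the argument is the same.
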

	\begin{proof}We may define projection operators $e,f: M\rightarrow M$ such that $e|_P=id$, $f|_P=0$, $e|_Q=0$, and $f|_Q=id$.  It is easy to see this is an idempotent pair in $\mathop{End}(M)$.  Let $M_S(R)$ be the noncommutative semiring of matrices with rows and columns indexed by $S$ which have finitely many nonzero entries in each column.  It is straightforward to see $\mathop{End}(M) \cong M_S(R)$, so we regard $(e, f)$ as an idempotent pair in $M_S(R)$.
		
		Using $e+f=1$ and the fact that the identity matrix is diagonal, $e_{ij}+f_{ij}=0$ for $i\neq j \in S$.  By the zero-sum free property, $e$ and $f$ are diagonal.  It is then easy to check that $(e_{ii}, f_{ii})$ is an idempotent pair in $R$ for each $i\in S$.  Then either $(e_{ii}, f_{ii})=(0, 1)$ or $(e_{ii}, f_{ii})=(1, 0)$.  We let $S' = \{i\in S \mid (e_{ii}, f_{ii})=(1, 0)\}$.
		
		Let $v\in M$ and consider a basis expansion $v = \sum_{s\in S} v_s s$.  Then
		\[ ev = \sum_{s\in S}\sum_{t\in S} e_{st} v_t s = \sum_{s\in S} e_{ss} v_s s = \sum_{s\in S'} v_s s \]
		It is then easy to see $P = \mathop{Im} (e) = \mathop{span}(S')$.  Since $S'$ is a subset of $S$, it's linearly independent so the claim about $P$ follows.  The claim about $Q$ is proven similarly.
	\end{proof}
	
	\begin{pro}\label{proposition: unique basis}
		Let $R$ be a zero-sum free semiring with only trivial idempotent pairs, and $M$ be a free module over $R$. For any two bases $S,S'$ of $M$, there is a bijection $f:S\to S'$ such that $f(v)$ is a unit multiple of $v$ for each $v$ in $S$.  In particular, the basis is unique up to rescaling and permutation. 
	\end{pro}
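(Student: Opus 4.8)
The plan is to deduce the proposition from the single structural input provided by Lemma \ref{lemma: direct summand of free module}, which turns direct-sum decompositions of a free module into partitions of an arbitrary given basis. At the outset I may assume $R \neq 0$ and $M \neq 0$: if $M = 0$ the only basis is empty and the statement is vacuous, and if $R = 0$ then every module is zero. I also note that in the remaining case no basis can contain $0$, since a set containing $0$ is never linearly independent; thus every basis element is a nonzero element of $M$.

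First I would record an elementary observation used twice below: if $T$ is a basis of a free $R$-module and $a \neq b$ are distinct elements of $T$, then $Ra \neq Rb$. Indeed, $Ra = Rb$ would give $a = cb$ for some $c \in R$, and comparing this with the expansion $a = 1\cdot a$ relative to the basis $T$ (uniqueness of basis coefficients) forces $1 = 0$ in $R$, a contradiction. Second, I would set up the map. Fix $w \in S$ and let $N_w$ be the span of $S \setminus \{w\}$; since $S$ is a basis we get an internal direct-sum decomposition $M = Rw \oplus N_w$. Applying Lemma \ref{lemma: direct summand of free module} to the basis $S'$ yields a subset $T_w \subseteq S'$ such that $Rw$ is free with basis $T_w$. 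Now $Rw \neq 0$ (it contains the nonzero element $w$), so $T_w \neq \emptyset$; moreover, since $Rw$ is generated by the single element $w$, writing $w$ in terms of the basis $T_w$ and then writing each basis element in terms of $w$ shows (again by uniqueness of coefficients in a free module) that $T_w$ is finite. Then, applying the dimension uniqueness property (Proposition \ref{proposition: DUP proposition}) to $Rw \cong R$ and $Rw \cong R^{|T_w|}$, we get $|T_w| = 1$; write $T_w = \{f(w)\}$ with $f(w) \in S'$.

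It remains to check that $f(w)$ is a unit multiple of $w$ and that $f \colon S \to S'$ is a bijection. Since $\{w\}$ and $\{f(w)\}$ both freely generate $Rw$, we may write $f(w) = u w$ and $w = t\,f(w)$ with $u, t \in R$; substituting gives $w = tu\,w$, and injectivity of $r \mapsto rw$ on $R$ (that is, linear independence of $\{w\}$) yields $tu = 1$, so $u$ is a unit and $f(w)$ is a unit multiple of $w$. Injectivity of $f$ is immediate from the observation: $f(w_1) = f(w_2)$ forces $Rw_1 = Rf(w_1) = Rf(w_2) = Rw_2$, hence $w_1 = w_2$. For surjectivity, the entire construction is symmetric in $S$ and $S'$, so it produces a map $g \colon S' \to S$ with $Rg(v) = Rv$ for every $v \in S'$; then for any $v \in S'$ the elements $f(g(v))$ and $v$ both lie in $S'$ and generate the same rank-one submodule $Rg(v) = Rv$, so $f(g(v)) = v$ by the observation, and $f$ is onto.

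I expect the only genuinely delicate point to be the finiteness of $T_w$, which is needed to invoke Proposition \ref{proposition: DUP proposition}; this is a short argument from uniqueness of coefficients in a free module (a cyclic submodule cannot have an infinite basis), and everything else is bookkeeping around Lemma \ref{lemma: direct summand of free module} together with the $0 \neq 1$ trick for comparing basis expansions.
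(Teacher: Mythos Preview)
Your proof is correct and follows the same overall strategy as the paper: for each $w\in S$ decompose $M=Rw\oplus N_w$, apply Lemma~\ref{lemma: direct summand of free module} relative to $S'$, show the resulting subset $T_w\subseteq S'$ is a singleton, and then verify the bijection and unit-multiple properties via the symmetric construction $g:S'\to S$. The one point of divergence is precisely the step you flagged as delicate: to obtain $|T_w|=1$ you first argue $T_w$ is finite and then invoke the DUP (Proposition~\ref{proposition: DUP proposition}), whereas the paper bypasses both by observing directly that any two elements $xw,yw\in Rw$ satisfy the dependence relation $y(xw)=x(yw)$, so a linearly independent subset of $Rw$ (in particular $T_w\subseteq S'$) can contain at most one element.
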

	\begin{proof}
		We first claim that for each $v \in S$, there is a unique $w \in S'$ such that $\mathrm{Span}(v)=\mathrm{Span}(w)$.  Now, one can write $M$ as follows:
		\[
		M = \mathrm{Span}(v) \oplus \mathrm{Span}( S \backslash {v}),
		\]
		and hence $\mathrm{Span}(v)$ is spanned by some subset of $S'$ by Lemma \ref{lemma: direct summand of free module}. This subset is clearly nonempty; in particular, $\mathrm{Span}(v)$ contains some element of $S'$.  Any two elements of $\mathrm{Span}(v)$ are linearly dependent since $x(yv)=y(xv)$.  Thus $\mathrm{Span}(v)$ cannot contain more than one element of $S'$; we will use $w$ to denote the unique element of $S' \cap \mathrm{Span}(v)$.  Clearly $w$ is the only element of $S'$ such that $\mathrm{Span}(v)=\mathrm{Span}(w)$ is possible, and so we have seen that the subset of $S'$ we constructed which spans $\mathrm{Span}(v)$ is $\{w\}$.
		
		From our claim, we obtain a function $f:S \to S'$, and by reversing the roles of $S$ and $S'$, we also obtain a function $g:S' \to S$. One can easily observe that $f$ and $g$ are inverses to each other. Furthermore, since $\mathrm{Span}(v)=\mathrm{Span}(f(v))$, we conclude that $f(v)$ is a unit multiple of $v$. In particular, $S'$ can be obtained from $S$ by rescaling and permutation. 
	\end{proof}

	\begin{myeg}\label{example: basis over B}Let $M$ be a free $\mathbb{B}$-module.  Then the above proposition implies the basis of $M$ is unique.  We also give an alternative argument for this special case, which relies on showing any basis is equal to the set of atoms\footnote{An atom is an element $x\in M$ such that $0\leq x\leq y$ implies $x=0$ or $x=y$.}.  Let $X\subseteq M$ be a basis and $\mathbb{S}_{fin}(X)$ be the $\mathbb{B}$-module of finite subsets of $X$ under union.  There is a homomorphism $\psi:\mathbb{S}_{fin}(X)\rightarrow M$ sending a subset of $X$ to its sum, and the fact that $X$ is a basis implies $\psi$ is bijective.  Under this isomorphism, atoms (i.e. singleton sets) of $\mathbb{S}_{fin}(X)$ correspond to elements of $X$, establishing the claim.
	\end{myeg}

	\begin{cor}\label{corollary: invertible matrix structure}
		Let $R$ be a zero-sum free semiring with only trivial idempotent pairs.  Let $A\in \emph{GL}_n(R)$ be an invertible $n\times n$ matrix with entries in $R$.  Then $A$ has exactly one nonzero entry in each column, and this entry is a unit.
	\end{cor}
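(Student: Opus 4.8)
The plan is to view the columns of $A$ as a basis of the free module $R^n$ and apply Proposition \ref{proposition: unique basis}. Let $e_1,\dots,e_n$ denote the standard basis of $R^n$. Since $A$ is invertible, the map $R^n \to R^n$ given by $A$ is an isomorphism, so the images $v_i := Ae_i$ (i.e.\ the columns of $A$) form another basis of $R^n$: they generate because $A$ is surjective, and they are linearly independent because $A$ is injective (if $\sum a_i v_i = \sum b_i v_i$, apply $A^{-1}$ to get $\sum a_i e_i = \sum b_i e_i$, whence $a_i = b_i$).

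Now apply Proposition \ref{proposition: unique basis} to the two bases $S = \{e_1,\dots,e_n\}$ and $S' = \{v_1,\dots,v_n\}$ of $R^n$. This gives a bijection $\sigma$ between $S$ and $S'$ such that each $v_i = \sigma(e_j)$ is a unit multiple of $e_j$ for the corresponding index $j$. But a unit multiple of a standard basis vector $e_j$ is precisely a vector whose only nonzero entry is in position $j$ and that entry is a unit. Reading this off for each column $v_i$ of $A$: column $i$ has exactly one nonzero entry, located in row $j = j(i)$, and that entry is a unit. Since $\sigma$ is a bijection, the assignment $i \mapsto j(i)$ is a permutation, so in fact every row also contains exactly one nonzero entry; but the statement only asserts the claim about columns, so we are done.

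I do not anticipate a serious obstacle here: the only point requiring a small amount of care is the verification that the columns of an invertible matrix genuinely form a basis in the sense of Definition \ref{definition: linearly independent} (both generation and linear independence), which follows formally from $A$ and $A^{-1}$ being mutually inverse $R$-module homomorphisms. Everything else is a direct translation of Proposition \ref{proposition: unique basis}. One could alternatively phrase the whole argument matrix-theoretically using the idempotent pair $(AE_{11}A^{-1}, A(I-E_{11})A^{-1})$ as in the proof of Lemma \ref{lemma: direct summand of free module}, but invoking Proposition \ref{proposition: unique basis} directly is cleaner.
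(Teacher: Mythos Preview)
Your proof is correct and follows essentially the same approach as the paper: apply Proposition~\ref{proposition: unique basis} to the standard basis and the columns of $A$ to conclude that each column is a unit multiple of some standard basis vector. The paper's version is slightly terser, but the argument is the same.
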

	\begin{proof}
		By applying Proposition \ref{proposition: unique basis} to the standard basis and the columns of $A$, we see that there is some permutation $f$ and a sequence of units $u_i \in R^\times$ such that the columns satisfy the following:
		\[
		A_i =u_ie_{f(i)}.
		\]
		In particular, the only nonzero entry of the $i$-th column of $A$ is in location $f(i)$.
	\end{proof}

	The following result first appeared in \cite[Proposition 2.2.2]{giansiracusa2018grassmann} when $R$ is an idempotent semiring without zero-divisors. We prove the case when $R$ is a zero-sum free semiring with only trivial idempotent pairs.

	\begin{pro}\label{proposition: exact sequence R, GL, S_n}
		Let $R$ be a zero-sum free semiring. If $R$ has only trivial idempotent pairs, then one has the following split short exact sequence of groups which is natural in $R$:
		\begin{equation}\label{eq: exact sequence R,GL, S_n}
			\begin{tikzcd}
				0 \arrow[r] &
				(R^\times)^n \arrow[r,"f"]&
				\emph{GL}_n(R) \arrow[r,"g"]
				& S_n\arrow[r]
				& 0
			\end{tikzcd}
		\end{equation}
		where $f$ is the diagonal map and $g$ sends a matrix $A$ to the unique permutation $\sigma$ such that $A_{\sigma(i)i}\neq 0$ for all $i$. 
	\end{pro}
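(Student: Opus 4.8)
The plan is to use Corollary \ref{corollary: invertible matrix structure} as the only substantive input and then do routine bookkeeping with matrices. By that corollary, every $A \in \mathrm{GL}_n(R)$ is a \emph{monomial matrix}: each column contains exactly one nonzero entry, and that entry is a unit. So for $A \in \mathrm{GL}_n(R)$ I would define $\sigma_A \colon \{1,\dots,n\} \to \{1,\dots,n\}$ by letting $\sigma_A(i)$ be the row index of the unique nonzero entry of the $i$-th column of $A$, and write $u^A_i := A_{\sigma_A(i)\,i} \in R^\times$. The first task is to show $\sigma_A$ is a bijection. I would do this by setting $B = A^{-1}$, which is again monomial by the corollary, and computing $\delta_{ij} = (BA)_{ij} = B_{i\,\sigma_A(j)}\,A_{\sigma_A(j)\,j} = B_{i\,\sigma_A(j)}\,u^A_j$ (the sum over the middle index collapses because $A_{kj}=0$ for $k\neq\sigma_A(j)$); taking $i=j$ forces $B_{j\,\sigma_A(j)}\neq 0$, so the unique nonzero entry of column $\sigma_A(j)$ of $B$ lies in row $j$, i.e. $\sigma_B(\sigma_A(j)) = j$. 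Using $AB=I$ symmetrically gives $\sigma_A(\sigma_B(k)) = k$, so $\sigma_A \in S_n$ and $g(A) := \sigma_A$ is a well-defined map $\mathrm{GL}_n(R) \to S_n$.

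Next I would verify that $g$ is a homomorphism and compute its kernel. For $A,A' \in \mathrm{GL}_n(R)$ the same collapse of sums yields, for the $j$-th column of $AA'$,
\[
(AA')_{ij} = A_{i\,\sigma_{A'}(j)}\,A'_{\sigma_{A'}(j)\,j},
\]
which is nonzero exactly when $i = \sigma_A(\sigma_{A'}(j))$ (and then equals the unit $u^A_{\sigma_{A'}(j)}u^{A'}_j$), so $g(AA') = \sigma_A\circ\sigma_{A'} = g(A)g(A')$. The map $f\colon (R^\times)^n \to \mathrm{GL}_n(R)$, $(u_1,\dots,u_n)\mapsto \mathrm{diag}(u_1,\dots,u_n)$, is visibly a homomorphism landing in $\mathrm{GL}_n(R)$ (its inverse is $\mathrm{diag}(u_1^{-1},\dots,u_n^{-1})$) and is injective. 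Since $g\circ f$ is trivial, $\mathrm{Im}(f)\subseteq\ker(g)$; conversely, if $g(A)=\mathrm{id}$ then the unique nonzero entry of column $i$ of $A$ sits in row $i$, so all off-diagonal entries vanish and $A = \mathrm{diag}(A_{11},\dots,A_{nn})$ with $A_{ii}\in R^\times$, whence $A\in\mathrm{Im}(f)$. This establishes exactness at $(R^\times)^n$ and at $\mathrm{GL}_n(R)$.

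For surjectivity of $g$ and the splitting, I would observe that for $\sigma\in S_n$ the permutation matrix $P_\sigma$ with $(P_\sigma)_{ij}=\delta_{i,\sigma(j)}$ involves only $0$ and $1$, so the usual identity $P_\sigma P_\tau = P_{\sigma\tau}$ holds over any semiring; hence $P_\sigma\in\mathrm{GL}_n(R)$ with inverse $P_{\sigma^{-1}}$, and $g(P_\sigma)=\sigma$. Thus $s\colon S_n\to\mathrm{GL}_n(R)$, $\sigma\mapsto P_\sigma$, is a group homomorphism with $g\circ s = \mathrm{id}_{S_n}$, so \eqref{eq: exact sequence R,GL, S_n} is a split short exact sequence. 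For naturality, a semiring homomorphism $\phi\colon R\to R'$ sends units to units and commutes with all matrix operations, hence induces $\mathrm{GL}_n(R)\to\mathrm{GL}_n(R')$ and $(R^\times)^n\to(R'^\times)^n$ compatibly with $f$, $g$, $s$ and the identity on $S_n$, as required.

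The only step that is not pure bookkeeping is showing $\sigma_A$ is a genuine permutation, and even that is immediate once Corollary \ref{corollary: invertible matrix structure} (hence Proposition \ref{proposition: unique basis}) is available, since it lets one apply the monomial structure simultaneously to $A$ and $A^{-1}$; I do not anticipate a real obstacle. The one point to keep in mind is that matrix multiplication is noncommutative and $R$ lacks additive cancellation, but every matrix appearing in the argument is either a $0$–$1$ permutation matrix or has at most one nonzero entry per column, so each product collapses to a single term and no cancellation is ever invoked.
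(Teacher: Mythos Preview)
Your proof is correct and follows essentially the same approach as the paper: invoke Corollary~\ref{corollary: invertible matrix structure} to get the monomial structure, verify $g$ is a homomorphism by collapsing the matrix product, split via permutation matrices, and identify $\ker g$ with invertible diagonal matrices. Your argument that $\sigma_A$ is a bijection via $A^{-1}$ is more explicit than the paper's, which tacitly relies on the fact that the proof of Corollary~\ref{corollary: invertible matrix structure} already produces a permutation $f$ (coming from Proposition~\ref{proposition: unique basis}); both routes are fine.
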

	\begin{proof}
		Corollary \ref{corollary: invertible matrix structure} shows that the definition of $g$ makes sense.  To see $g$ is a homomorphism, let $A,B\in \emph{GL}_n(R)$.  Fix $k$ and let $i = (g(A)\circ g(B))(k)$.  One has $(AB)_{ik} = \sum_j A_{ij} B_{jk}$.  It is easy to verify that the term corresponding to $j=g(B)(k)$ is nonzero and all other terms are zero.
		
		It is easy to see that $g$ is split by the inclusion of permutation matrices.  Furthermore, the kernel of $g$ is the group of invertible diagonal matrices, which is easily seen to be $(R^\times)^n$. 
		
		Finally, one can easily check that \eqref{eq: exact sequence R,GL, S_n} is functorial in $R$. For instance, if we have a map $R \to A$, then the composition
		\[
		\textrm{GL}_n(R) \to\textrm{GL}_n(A) \to S_n
		\]
		is just the map $\textrm{GL}_n(R) \to S_n$, since the change of scalars does not effect which elements are nonzero (since the nonzero elements are units). 
	\end{proof}

	Proposition \ref{proposition: exact sequence R, GL, S_n} links $\textrm{GL}_n(R)$ with the groups of invertible diagonal matrices $(R^\times)^n$ and permutation matrices $S_n$. To be precise, we have the following.

	\begin{cor}\label{corollary: semidirect}
		Let $R$ be a zero-sum free semiring. If $R$ has only trivial idempotent pairs, then one has the following isomorphism of groups:
		\[
		(R^\times)^n \rtimes S_n \simeq \emph{GL}_n(R),
		\]
  	where $S_n$ acts on $(R^\times)^n$ by permuting factors. 
	\end{cor}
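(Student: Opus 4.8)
The plan is to deduce the semidirect product decomposition directly from the split short exact sequence in Proposition~\ref{proposition: exact sequence R, GL, S_n}. Recall the general fact from group theory: if $0 \to K \to G \to H \to 0$ is a short exact sequence of groups that splits via a section $s: H \to G$, then $G \simeq K \rtimes H$, where the action of $H$ on $K$ is given by conjugation by $s(h)$. So the entire content is to unwind this in our concrete situation with $K = (R^\times)^n$, $G = \textrm{GL}_n(R)$, $H = S_n$, and the section being the inclusion of permutation matrices.

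First I would recall that Proposition~\ref{proposition: exact sequence R, GL, S_n} already supplies everything we need: $f$ is the injection of diagonal matrices (with image exactly the invertible diagonal matrices, which is normal in $\textrm{GL}_n(R)$ since it is $\ker g$), $g$ is the surjection to $S_n$, and $g$ is split by $s: S_n \to \textrm{GL}_n(R)$ sending a permutation to its permutation matrix. Then I would invoke the standard semidirect-product recognition criterion. Concretely, every $A \in \textrm{GL}_n(R)$ factors uniquely as $A = D \cdot P_\sigma$, where $\sigma = g(A)$, $P_\sigma$ is the corresponding permutation matrix, and $D = A P_\sigma^{-1}$ is the invertible diagonal matrix with entries dictated by the units $u_i$ appearing in Corollary~\ref{corollary: invertible matrix structure}; uniqueness follows because $D \in \ker g$ and $s$ is a section. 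This gives the set-theoretic bijection $(R^\times)^n \times S_n \to \textrm{GL}_n(R)$, and checking it is a group homomorphism for the semidirect-product multiplication (with $S_n$ acting on $(R^\times)^n$ by permuting coordinates, i.e. $P_\sigma D P_\sigma^{-1}$ is again diagonal with permuted entries) is a short matrix computation.

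The main (very mild) obstacle is simply identifying the action of $S_n$ on $(R^\times)^n$ correctly and verifying it is the coordinate-permutation action: one computes $s(\sigma) f(d_1,\dots,d_n) s(\sigma)^{-1} = f(d_{\sigma^{-1}(1)},\dots,d_{\sigma^{-1}(n)})$ (or its inverse, depending on conventions), which is a routine check that conjugation of a diagonal matrix by a permutation matrix permutes the diagonal entries. Naturality in $R$ is inherited from the naturality of the exact sequence in Proposition~\ref{proposition: exact sequence R, GL, S_n}, since all the maps involved (diagonal inclusion, permutation-matrix section, and the conjugation action) are defined by formulas that commute with any semiring homomorphism $R \to A$. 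No genuinely hard step arises; the corollary is essentially a formal consequence of the preceding proposition once the splitting and the normal subgroup are in hand.
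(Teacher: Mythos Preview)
Your proposal is correct and follows exactly the same approach as the paper: deduce the semidirect product from the split short exact sequence of Proposition~\ref{proposition: exact sequence R, GL, S_n}, with the splitting given by the inclusion of permutation matrices. The paper's proof is a one-line reference to that splitting, whereas you have spelled out the standard recognition criterion and the identification of the $S_n$-action with coordinate permutation; nothing you wrote diverges from or adds to the paper's argument in any substantive way.
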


	\begin{proof}
		This directly follows from the fact that the exact sequence \eqref{eq: exact sequence R,GL, S_n} splits by considering $S_n$ as the group of permutation matrices. 
	\end{proof}
	
	In \cite{pirashvili2015cohomology}, for a monoid $M$, the following exact sequence is given: 
	\begin{equation}\label{eq: exact seqeunce M, GL, S_n}
		\begin{tikzcd}
			0 \arrow[r] &
			(M^\times)^n \arrow[r,"\varphi"]&
			\textrm{GL}_n(M) \arrow[r,"\psi"]
			& S_n\arrow[r]
			& 0
		\end{tikzcd}
	\end{equation}
	where $\varphi$ and $\psi$ are defined in the same way as \eqref{eq: exact sequence R,GL, S_n}. 
	
	Let $M$ be a cancellative monoid, $K$ be an idempotent semifield, and $R=K[M]$. By \cite[Proposition 3.2]{jun2019picard}, we have that $R^\times \simeq (K^\times \times M^\times)$. In particular, the exact sequences \eqref{eq: exact sequence R,GL, S_n} and \eqref{eq: exact seqeunce M, GL, S_n} fit into the following commutative diagram:
	\begin{equation}
		\begin{tikzcd}
			0 \arrow[r] &
			(M^\times)^n \arrow[r,"\varphi"]   \arrow[hookrightarrow,d,""] &
			\emph{GL}_n(M) \arrow[r,"\psi"] \arrow[hookrightarrow,d,""]
			& S_n\arrow[r] \arrow[d,"\emph{id}"]
			& 0 \\
			0 \arrow[r] &
			(K^\times \times M^\times)^n \arrow[r,"f"]&
			\emph{GL}_n(R) \arrow[r,"g"]
			& S_n\arrow[r]
			& 0
		\end{tikzcd}
	\end{equation}
	where the first two vertical maps are natural injections. 
	\medskip

	Now, we turn to the case of projective modules over semirings. There are two possible definitions for projective modules; (1) a direct summand of a free module, and (2) a module satisfying a homomorphism-lifting property. The first definition (direct summand) is too restrictive in the sense that projective modules are free modules in this definition as Lemma \ref{lemma: direct summand of free module} shows.
	
	To avoid the pathology in Lemma \ref{lemma: direct summand of free module}, we define projectives via homomorphism-lifting property as follows: 
	
	\begin{mydef}\label{definition: projective module}
		Let $R$ be a semiring. A module $M$ over $R$ is \emph{projective} if for any surjective homomorphism $f:N \to M$ of modules and a homomorphism $g:P \to M$, there exists a unique homomorphism $\tilde{g}:P \to N$ such that $f\circ \tilde{g} = g$.  
	\end{mydef}
	
	One can easily see that with Definition \ref{definition: projective module}, any free module is projective. We also note that in \cite{jun2019projective} and \cite{jun2020homology}, more general versions of projective modules are introduced in a framework of \emph{systems} which include modules over hyperfields. 
	
	Classically, for finitely generated modules over Noetherian rings, being projective is the same thing as being locally free. However, this is false for semirings as the following example shows. 
	
	\begin{myeg}\label{example: projective is not locally free}
		Let $\mathbb{B}$ be the Boolean semifield, and $F$ be the free module over $\mathbb{B}$ on two generators $a,b$. Let $P=F/\langle a+b=b\rangle$. For any $\mathbb{B}$-module $M$, a homomorphism $g:P \to M$ is the same as a pair $x,y$ in $M$ such that $x+y=y$.  Pick a surjective homomorphism $f:N \to M$.  Let $X,Y$ in $N$ be lifts of $x,y$, i.e. $f(X)=x$ and $f(Y)=y$. Then $X+Y$ is also a lift of $y$ and we have a homomorphism $\tilde{g}:P \to N$ sending $a$ to $X$ and $b$ to $X+Y$. Thus $P$ is projective. One can easily check that $P$ precisely has $3$ elements, and hence cannot be free since any free module $L$ over $\mathbb{B}$ should be $\mathbb{B}^n$ for some $n \in \mathbb{N}$, and hence $|L|=2^n$. Since $\mathbb{B}$ is local idempotent semiring, it follows that $P$ is not locally free. 
	\end{myeg}

	In fact, the argument in Example \ref{example: projective is not locally free} works for any idempotent semiring $R$ (rather than just $\mathbb{B}$).  We will assume we have chosen a homomorphism $R\rightarrow \mathbb{B}$, which may be constructed by sending some saturated prime to 0 and its complement to 1.
	
	Let $P_R$ be a module constructed similarly as in Example \ref{example: projective is not locally free}. Then $P_R$ is projective and for any $R$-algebra $A$, one can easily see that $P_A=P_R\otimes_R A$. Now, the special case when $A=\mathbb{B}$ shows that $P_R$ cannot be free from Example \ref{example: projective is not locally free}.  Since $R$ is arbitrary, we have for any prime $p$ that $(P_R)_p = P_R \otimes_R R_p = P_{R_p}$ is also not free, so $P_R$ is not locally free at any prime.
	
	This shows a crucial difference between projective modules over semirings and rings. For instance, classically any finitely generated projective module over a polynomial ring is free (Quillen–Suslin theorem), whereas this is no longer true as our example shows in the case of semirings.

	\section{Vector bundles for tropical schemes} \label{section: vector bundles for tropical toric schemes}

	In this section, we introduce a notion of vector bundles on a semiring scheme. We then prove two of our main theorems; (1) any vector bundle on an irreducible semiring scheme satisfying a certain local condition is a coproduct of line bundles (Theorem \ref{theorem: vector bundle decomposition}), and (2) there is a natural bijection between vector bundles on a monoid scheme $X$ satisfying some conditions and vector bundles on the semiring scheme $X_K$ (Theorem \ref{theorem: bundles stable under scalar extension}), where $X_K$ is the scalar extension of $X$ to an idempotent semifield $K$. We start with the following definition.

	\begin{mydef}
		Let $X$ be a semiring scheme. 
		\begin{enumerate}
			\item 
			A \emph{vector bundle} on $X$ is an $\mathcal{O}_X$-module $\mathcal{F}$ such that for each $x \in X$, there exists an open neighborhood $U$ of $x$ and a finite set $I$ \footnote{In this paper, we only consider the case when the rank is finite.} such that
			\[
			\mathcal{F}|_{U} \simeq \bigoplus_{i \in I} \mathcal{O}_X|_U
			\]
			as an $\mathcal{O}_X|_U$-module. A vector bundle $\mathcal{F}$ on $X$ is said to be \emph{rank $n$} if $|I|=n$ for each open subset $U$. 
			\item 
			We let $\mathbf{Vect}(X)$ be the category of vector bundles on $X$, and $\mathbf{Vect}_n(X)$ be the full subcategory consisting of vector bundles of rank $n$ on $X$ 
		\end{enumerate}
	\end{mydef}
	
	The following proposition ensures that the rank of a vector bundle on a semiring scheme is well defined. In other words, the rank of a vector bundle uniquely exists if $X$ is connected - obviously the rank of a vector bundle does not make any sense if $X$ is not connected. 
	
	\begin{pro}
		Let $X$ be a connected semiring scheme and $\mathcal{F}$ be a vector bundle on $X$. Then the rank of $\mathcal{F}$ exists and is unique. 
	\end{pro}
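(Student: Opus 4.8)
The plan is to use the standard argument that the rank of a locally free sheaf is a locally constant function on the base, and then invoke connectedness. First I would define, for a vector bundle $\mathcal{F}$ on $X$, the function $r:X \to \mathbb{N}$ sending a point $x$ to the integer $n$ such that $\mathcal{F}$ is isomorphic to $\bigoplus_{i=1}^n \mathcal{O}_X|_U$ on some open neighborhood $U$ of $x$; the first thing to check is that this is well defined, i.e. that the integer $n$ does not depend on the choice of trivializing neighborhood. For this one passes to the stalk $\mathcal{O}_{X,x}$, which is a local semiring, and observes that a trivialization of $\mathcal{F}$ over $U\ni x$ gives $\mathcal{F}_x \simeq \mathcal{O}_{X,x}^n$, so it suffices to know that $\mathcal{O}_{X,x}^n \simeq \mathcal{O}_{X,x}^m$ forces $n=m$. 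This is exactly the dimension uniqueness property, which holds for all semirings by Proposition \ref{proposition: DUP proposition}. Hence $r(x)$ is well defined.

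Next I would show $r$ is locally constant. This is immediate from the definition: if $\mathcal{F}|_U \simeq \bigoplus_{i=1}^n \mathcal{O}_X|_U$, then every point $y\in U$ has $U$ itself as a trivializing neighborhood with the same $n$, so $r$ is constant on $U$, equal to $n$. Thus $r$ is constant on each member of an open cover of $X$, hence locally constant, hence continuous when $\mathbb{N}$ carries the discrete topology.

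Finally, since $X$ is connected and $r:X \to \mathbb{N}$ is continuous with $\mathbb{N}$ discrete, $r$ is constant; call its value $n$. Then for every $x\in X$ and every trivializing neighborhood $U$ of $x$ we have $|I|=n$, which is precisely the statement that $\mathcal{F}$ has rank $n$, and the value $n$ is evidently unique. The only genuinely substantive point is the well-definedness step, and that is handed to us by the already-established DUP for semirings; everything else is the routine locally-constant-on-a-connected-space argument. One minor subtlety worth a sentence: the stalk $\mathcal{O}_{X,x}$ is indeed a semiring (the structure sheaf takes values in semirings), so Proposition \ref{proposition: DUP proposition} applies verbatim, and tensoring a trivialization over $U$ to the stalk is legitimate because localization is exact and commutes with finite direct sums.
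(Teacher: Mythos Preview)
Your proof is correct and follows essentially the same approach as the paper: both use the DUP (Proposition \ref{proposition: DUP proposition}) applied at stalks to show the rank is well defined pointwise, then argue that the rank is locally constant and invoke connectedness. The paper phrases the connectedness step as ``the set where the stalk has rank $n$ is open,'' while you phrase it as ``$r$ is a locally constant function into a discrete space,'' but these are the same argument. One minor remark: your closing appeal to exactness of localization is not quite the right justification in the semiring setting (exactness is delicate here); what you actually need is that stalks commute with finite direct sums, which holds because filtered colimits of semimodules commute with finite products.
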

	\begin{proof}The uniqueness is Proposition \ref{proposition: DUP proposition}.  
		Once we have uniqueness, the existence is the same as the classical case. Hence we prove the existence. Fix $n$ and let $U$ be the set of $x \in X$ such that the stalk $\mathcal{O}_{X,x}$ at $x$ is free of rank $n$. For any $x \in X$, we can pick a neighborhood $V_x$ of $x$ on which $\mathcal{F}$ is trivial. If $x \in U$, then $\mathcal{F}$ is free of rank $n$ on $V_x$ (if $\mathcal{F}$ had any other rank on $V_x$, $\mathcal{F}$ would have that rank when localizing at $x$ contradicting Proposition \ref{proposition: DUP proposition}). Hence it is free of rank $n$ when localized at any point of $V_x$ so $V_x\subseteq U$ once $x \in U$. By connectedness of $X$ either $U =\emptyset$ or $U=X$. Finally, by picking $n$ to be the rank of some stalk, we can land in the case $U=X$, and hence $\mathcal{F}$ has rank $n$.
	\end{proof}

	Let $X$ be a semiring scheme. The coproduct and tensor product of modules gives rise to the following operations on vector bundles:
	\begin{equation}\label{eq: coproduct}
		\coprod: \mathbf{Vect}_m(X) \times \mathbf{Vect}_n(X) \to \mathbf{Vect}_{m+n}(X)
	\end{equation}
	\begin{equation}\label{eq: tensor}
		\otimes_{\mathcal{O}_X}:\mathbf{Vect}_m(X) \times \mathbf{Vect}_n(X) \to \mathbf{Vect}_{mn}(X)
	\end{equation}
	Let $Vect_n(X)$ be the set of isomorphism classes of vector bundles of rank $n$ on $X$. Then clearly the maps \eqref{eq: coproduct} and \eqref{eq: tensor} define corresponding maps for $Vect_n(X)$.

	\begin{mythm} \label{theorem: bundle cohomology bijections}
		Let $X$ be a semiring scheme (or more generally a semiringed space).  There is a natural bijection
		\[
		Vect_n(X) \simeq H^1(X,\emph{GL}_n(\mathcal{O}_X)).
		\]
		Moreover one has a commutative diagram
		\begin{equation}\label{eq: diagram for classifying sums of vector bundles}
			\begin{tikzcd}
				Vect_m(X) \times Vect_n(X) \arrow{d}{\simeq} \arrow{r}{}
				& Vect_{m+n}(X) \arrow{d}{\simeq} \\
				H^1(X,\emph{GL}_m(\mathcal{O}_X)) \times H^1(X,\emph{GL}_n(\mathcal{O}_X))\arrow{r}{}
				& H^1(X,\emph{GL}_{m+n}(\mathcal{O}_X))
			\end{tikzcd}
		\end{equation}
		
		and a diagram
		\begin{equation}\label{eq: diagram for classifying sums of line bundles}
			\begin{tikzcd}
				\Pic(X)^n \arrow{d}{\simeq} \arrow{r}{}
				& Vect_{n}(X) \arrow{d}{\simeq} \\
				H^1(X,(\mathcal{O}_X^\times)^n) \arrow{r}{}
				& H^1(X,\emph{GL}_n(\mathcal{O}_X))
			\end{tikzcd}
		\end{equation}
	\end{mythm}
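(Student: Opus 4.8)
The plan is to follow the classical Čech-cohomological description of vector bundles, adapting it to the semiringed setting, and then to trace through the functoriality of the cocycle construction under the group homomorphisms in Proposition~\ref{proposition: exact sequence R, GL, S_n}. First I would establish the main bijection $Vect_n(X) \simeq H^1(X,\mathrm{GL}_n(\mathcal{O}_X))$. Given a rank-$n$ vector bundle $\mathcal{F}$ on $X$, choose an open cover $\{U_\alpha\}$ together with trivializations $\psi_\alpha:\mathcal{F}|_{U_\alpha}\xrightarrow{\sim}\mathcal{O}_X^n|_{U_\alpha}$; then the transition maps $g_{\alpha\beta}=\psi_\alpha\circ\psi_\beta^{-1}$ are sections of $\mathrm{GL}_n(\mathcal{O}_X)$ over $U_\alpha\cap U_\beta$ and satisfy the cocycle condition $g_{\alpha\beta}g_{\beta\gamma}=g_{\alpha\gamma}$. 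Refining the cover changes the cocycle only up to a coboundary, so we get a well-defined class in $\check H^1(X,\mathrm{GL}_n(\mathcal{O}_X))$; conversely a cocycle glues $n$ copies of $\mathcal{O}_X$ on the pieces of the cover into a vector bundle. These constructions are mutually inverse, and two bundles are isomorphic iff their cocycles differ by a coboundary — exactly the equivalence defining $\check H^1$. Since $\mathrm{GL}_n(\mathcal{O}_X)$ is a (possibly nonabelian) sheaf of groups, $H^1$ here means the pointed set $\check H^1$; the argument is formally identical to the scheme-theoretic case and uses nothing special about semirings beyond the fact that $\mathcal{O}_X$-modules, tensor products, and sheafification behave as expected, all of which were set up in \S\ref{section: preliminaries}.

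Next I would verify the compatibility diagram \eqref{eq: diagram for classifying sums of vector bundles}. The coproduct of vector bundles with cocycles $(g_{\alpha\beta})$ and $(h_{\alpha\beta})$ — after passing to a common refinement of the two covers — has cocycle given by the block-diagonal matrix $\mathrm{diag}(g_{\alpha\beta},h_{\alpha\beta})$, since $(\mathcal{F}\oplus\mathcal{G})|_{U_\alpha}\simeq\mathcal{O}_X^m|_{U_\alpha}\oplus\mathcal{O}_X^n|_{U_\alpha}$ with the obvious trivialization. The bottom horizontal map in the diagram is precisely the map on $\check H^1$ induced by the group homomorphism $\mathrm{GL}_m\times\mathrm{GL}_n\to\mathrm{GL}_{m+n}$ sending a pair of matrices to their block-diagonal sum. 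Thus the diagram commutes by construction. The second diagram \eqref{eq: diagram for classifying sums of line bundles} is the special case $m=n=1$ iterated $n$ times: the left vertical arrow is the standard identification $\Pic(X)\simeq H^1(X,\mathcal{O}_X^\times)$ (which is \eqref{eq: diagram for classifying sums of vector bundles} with $n=1$, or rather the $n=1$ instance of the main bijection together with the observation that $\mathrm{GL}_1(\mathcal{O}_X)=\mathcal{O}_X^\times$), the top arrow sends $(\mathcal{L}_1,\dots,\mathcal{L}_n)$ to $\mathcal{L}_1\oplus\cdots\oplus\mathcal{L}_n$, and the bottom arrow is induced by the diagonal inclusion $(\mathcal{O}_X^\times)^n\hookrightarrow\mathrm{GL}_n(\mathcal{O}_X)$; commutativity again follows because the cocycle of a direct sum of line bundles is the diagonal matrix whose entries are the cocycles of the $\mathcal{L}_i$.

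The main obstacle, and the one point requiring genuine care, is the well-definedness of the $\check H^1$ construction and its functoriality in the \emph{nonabelian} sheaf of groups $\mathrm{GL}_n(\mathcal{O}_X)$: one must confirm that the cocycle/coboundary formalism and the map on $H^1$ induced by a homomorphism of sheaves of groups make sense without additive inverses. This is not hard — the only operations used in writing down cocycles, coboundaries, and the equivalence relation are multiplication and inversion of matrices whose entries are units — but it should be checked that we never implicitly subtract, and in particular that the cocycle condition and the coboundary relation $g'_{\alpha\beta}=c_\alpha g_{\alpha\beta}c_\beta^{-1}$ are phrased multiplicatively throughout. One should also note that refinement-compatibility of cocycles and the passage to a common refinement when comparing two covers go through verbatim. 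Once these points are dispatched, everything reduces to the classical bookkeeping, and the functoriality of the constructions in $n$ (via block-diagonal inclusions) yields both diagrams immediately.
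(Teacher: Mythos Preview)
Your proposal is correct and follows essentially the same approach as the paper: construct the bijection by passing between trivializations and \v{C}ech cocycles on an open cover (then taking colimits over refinements), and verify the diagrams by observing that the cocycle of a direct sum is the block-diagonal matrix of the summand cocycles. The paper's proof is slightly terser and does not invoke Proposition~\ref{proposition: exact sequence R, GL, S_n} at all (that proposition plays no role here---the bottom arrows are simply induced by the block-diagonal inclusions of groups), but otherwise the arguments are the same.
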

	\begin{proof}The proof is the same as the classical case of schemes, and is quite standard.  Fix an open cover $\mathcal{U}$ of $X$. It suffices to show that 
		\[
		Vect_n(\mathcal{U}) \simeq \check{H}^1(\mathcal{U},\emph{GL}_n(\mathcal{O}_X)),
		\]
		where $Vect_n(\mathcal{U})$ denotes the set of vector bundles trivialized by $\mathcal{U}$ - one then obtains the first part of the theorem by taking colimits.
		
		Given some $\mathcal{F} \in Vect_n(\mathcal{U})$, for each $U_i \in \mathcal{U}$ one has isomorphisms $\psi_i: \mathcal{F}(U_i)\rightarrow \mathcal{O}_X(U_i)^n$.  For each pair $U_i, U_j \in \mathcal{U}$, define 
		\begin{equation}\label{eq: 1-cocycle construction}
			\theta_{ij} = \psi_i \mid_{U_i\cap U_j} \psi_j^{-1} \mid_{U_i\cap U_j} \in \emph{GL}_n(\mathcal{O}_X) (U_i \cap U_j).
		\end{equation}
		It is routine to check that $\theta$ is a 1-cocycle.  If we made a different choice of trivialization, say $\phi_i \psi_i$ with $\phi_i \in \emph{GL}_n(\mathcal{O}_X(U_i))$, we would get the cocycle $\phi_i\theta_{ij}\phi_j^{-1}$ which defines the same cohomology class.  Conversely, given a 1-cocycle $\theta$, we can construct a vector bundle $\mathcal{F}$ by defining a section $s\in \mathcal{F}(V)$ to be a family of sections $s_i \in \mathcal{O}_X(V\cap U_i)^n$ for each $U_i\in \mathcal{U}$ satisfying
		\[
		s_i = \theta_{ij}\mid_{V\cap U_i \cap U_j} s_j
		\]
		for each $U_i, U_j\in \mathcal{U}$.
		
		Fix a rank $m$ bundle $\mathcal{A}$ and a rank $n$ bundle $\mathcal{B}$.  Let $\mathcal{U}$ be an open cover on which both bundles are trivial.  We have isomorphisms $\psi_{\mathcal{A},i}: \mathcal{A}(U_i)\rightarrow \mathcal{O}_X(U_i)^m$  and $\psi_{\mathcal{B},i}: \mathcal{B}(U_i)\rightarrow \mathcal{O}_X(U_i)^n$.  We may define isomorphisms $\psi_i: (\mathcal{A} \oplus \mathcal{B})(U_i)\rightarrow \mathcal{O}_X(U_i)^{m+n}$ by 
		\[
		\psi_i(s_\mathcal{A}, s_\mathcal{B}) = (\psi_{\mathcal{A},i}(s_\mathcal{A}), \psi_{\mathcal{B},i}(s_\mathcal{B}))
		\]
		for any $(s_\mathcal{A}, s_\mathcal{B}) \in (\mathcal{A} \oplus \mathcal{B})(U_i)$.  We may then construct $\theta$ via equation \eqref{eq: 1-cocycle construction}.  The corresponding cohomology class is the one obtained by mapping $(\mathcal{A}, \mathcal{B})$ via the top and right arrows of \eqref{eq: diagram for classifying sums of vector bundles}.  By the structure of $\psi_i$, the matrices $\theta_{ij}$ are block diagonal with blocks of the form $\psi_{\mathcal{A}, i}\psi_{\mathcal{A}, j}^{-1}$ and $\psi_{\mathcal{B}, i}\psi_{\mathcal{B}, j}^{-1}$.  This block diagonal description is precisely what we get by following the left and bottom arrows of \eqref{eq: diagram for classifying sums of vector bundles}.
		
		Diagram \eqref{eq: diagram for classifying sums of line bundles} can be proven either by repeated application of diagram \eqref{eq: diagram for classifying sums of vector bundles} or via the same technique.
	\end{proof}

	\begin{lem}\label{lemma: cohomology vanishing for constant sheaf}
		Let $X$ be an irreducible semiring scheme, $G$ be a group, and $\underline{G}$ be the constant sheaf on $X$ associated to $G$. Then
		\[
		H^1(X,\underline{G})=0.
		\]
	\end{lem}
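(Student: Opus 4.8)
The plan is to compute $H^1(X,\underline{G})$ via \v{C}ech cohomology and to exploit the fact that a constant sheaf on an irreducible space is, in the strongest possible sense, ``locally constant with no room to vary.'' Concretely, as in the proof of Theorem~\ref{theorem: bundle cohomology bijections}, one has $H^1(X,\underline{G})=\varinjlim_{\mathcal{U}}\check{H}^1(\mathcal{U},\underline{G})$, the colimit ranging over open covers $\mathcal{U}$ of $X$ ordered by refinement. So it suffices to show that $\check{H}^1(\mathcal{U},\underline{G})$ is a single point for every open cover $\mathcal{U}=\{U_i\}_{i\in I}$; taking the colimit then yields the claim.

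Next I would use irreducibility to pin down the \v{C}ech complex. Since $X$ is irreducible, every nonempty open subset is irreducible, hence connected, and any finite collection of nonempty open sets has nonempty intersection. Discarding the empty members of $\mathcal{U}$ (which does not affect whether it is a cover) and using $X\neq\emptyset$, we may assume every $U_i$ is nonempty and that all double and triple intersections $U_i\cap U_j$, $U_i\cap U_j\cap U_k$ are nonempty. Because the constant sheaf assigns to a connected open set the group $G$, with every restriction map between nonempty opens equal to $\mathrm{id}_G$, the \v{C}ech complex degenerates: a $1$-cocycle is simply a family $(g_{ij})_{i,j\in I}$ of elements of $G$ with $g_{ii}=e$ and $g_{ij}g_{jk}=g_{ik}$, and two cocycles are cohomologous exactly when they differ by $(h_ih_j^{-1})$ for some $(h_i)\in G^I$.

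It then remains to trivialize an arbitrary such cocycle. Fixing an index $i_0\in I$, the cocycle relation $g_{ij}g_{ji_0}=g_{ii_0}$ gives $g_{ij}=g_{ii_0}g_{ji_0}^{-1}$, so setting $h_i:=g_{ii_0}$ exhibits $(g_{ij})$ as the coboundary of $(h_i)$. Hence $\check{H}^1(\mathcal{U},\underline{G})$ is trivial for every $\mathcal{U}$, and passing to the colimit gives $H^1(X,\underline{G})=0$.

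This is the non-abelian analogue of the familiar fact that constant sheaves on irreducible spaces are flasque and therefore acyclic, and there is no serious obstacle in the argument; the only points requiring a little care are (i) that $H^1$ of the sheaf of groups $\underline{G}$ is computed by \v{C}ech cohomology — which is precisely the description already used for $\mathrm{GL}_n(\mathcal{O}_X)$ in Theorem~\ref{theorem: bundle cohomology bijections} — and (ii) the bookkeeping around empty members of a cover and the identification $\underline{G}(U)=G$ for nonempty $U$, both of which are immediate from irreducibility.
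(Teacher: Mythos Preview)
Your argument is correct and follows exactly the approach the paper indicates: the paper's proof simply observes that irreducibility makes every nonempty open connected so that $\underline{G}$ is the sheaf of constant $G$-valued functions, and then defers to ``a simple calculation,'' which is precisely the \v{C}ech computation you have written out in full. In other words, you have supplied the details the paper omits, and nothing more needs to be added.
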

	\begin{proof}
		Since $X$ is irreducible, every open subset of $X$ is connected. In particular, $\underline{G}$ is a sheaf of constant functions. A simple calculation will then show the cohomology vanishes.	
	\end{proof}

	\begin{rmk}
		If $G$ is an abelian group in Lemma \ref{lemma: cohomology vanishing for constant sheaf}, one has that $H^p(X,\underline{G})=0$ for all $p>0$. For instance, see  \cite[\href{https://stacks.math.columbia.edu/tag/02UW}{Lemma 02UW}]{stacks-project}.
	\end{rmk}

	One can sheafify Proposition \ref{proposition: exact sequence R, GL, S_n} as follows. 
	
	\begin{pro}\label{proposition: sheafified exact sequence}
		Let $X$ be an irreducible semiring scheme which is locally isomorphic to $\Spec R$, where $R$ is a zero-sum free semiring. Let $\underline{S_n}$ be the constant sheaf associated to $S_n$. Then, one has the following split exact sequence of sheaves:
		\begin{equation}\label{eq: sheaf exact sequence R,GL, S_n}
			\begin{tikzcd}
				0 \arrow[r] &
				(\mathcal{O}_X^\times)^n \arrow[r,"f"]&
				\emph{GL}_n(\mathcal{O}_X) \arrow[r,"g"]
				& \underline{S_n}\arrow[r]
				& 0
			\end{tikzcd}
		\end{equation}
	\end{pro}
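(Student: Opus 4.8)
The plan is to deduce the statement by sheafifying the affine, algebraic version established in Proposition \ref{proposition: exact sequence R, GL, S_n}, using irreducibility to guarantee its hypothesis (only trivial idempotent pairs) wherever it is needed. First I would record that $(\mathcal{O}_X^\times)^n$ and $\mathrm{GL}_n(\mathcal{O}_X)$ are sheaves of groups: invertibility of an element, resp.\ of a matrix, is a local condition, so the presheaves $U\mapsto (\mathcal{O}_X(U)^\times)^n$ and $U\mapsto \mathrm{GL}_n(\mathcal{O}_X(U))$ already satisfy the sheaf axioms, while $\underline{S_n}$ is a constant sheaf by construction. The map $f$ is the diagonal embedding, visibly an injective morphism of sheaves, so the real content is building $g$, checking exactness, and producing a splitting.

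Next I would fix a basis $\mathcal{B}$ of $X$ consisting of affine opens $U\cong \Spec R_U$ with $R_U$ zero-sum free: start from the hypothesised cover by spectra of zero-sum free semirings, pass to the distinguished opens $D(h)$ inside those pieces, and use Lemma \ref{lemma: zero-sum trivial idempotent pairs}(1) so that the localizations $R_U$ stay zero-sum free. Since $X$ is irreducible, every $U\in\mathcal{B}$ is irreducible, so $\Spec R_U$ is connected, and hence $R_U$ has only trivial idempotent pairs by Proposition \ref{proposition: connected implies no idempotet paris}. Thus Proposition \ref{proposition: exact sequence R, GL, S_n} applies to each $R_U$, giving split short exact sequences with maps $f_U,g_U$, and—crucially—naturality in $R_U$. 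From this I would assemble $g$: naturality makes the $g_U$ compatible with restrictions inside $\mathcal{B}$, and for a section $s$ of $\mathrm{GL}_n(\mathcal{O}_X)$ over an open $V$, the function $x\mapsto g_U(s|_U)$ for $U\in\mathcal{B}$ with $x\in U\subseteq V$ is then well defined and locally constant, hence a section of $\underline{S_n}(V)$; this defines the morphism of sheaves $g$, restricting to $g_U$ on each $U\in\mathcal{B}$.

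For exactness I would pass to stalks. Given $x\in X$, choose $U\in\mathcal{B}$ with $x\in U$; then $\mathcal{O}_{X,x}$ is a localization of $R_U$, hence zero-sum free by Lemma \ref{lemma: zero-sum trivial idempotent pairs}(1), and it is a local semiring, which forces its idempotent pairs to be trivial: if $(e,f)$ is an idempotent pair and $\mathfrak{m}$ is the maximal ideal, then a product of units is a unit, so $ef=0$ puts $e$ or $f$ in $\mathfrak{m}$, and then $e+f=1$ makes the other a unit, collapsing the pair to $(0,1)$ or $(1,0)$. Since $\mathrm{GL}_n(-)$, $(-)^\times$ and formation of constant sheaves commute with the relevant filtered colimits, and sheafification preserves stalks, the stalk of \eqref{eq: sheaf exact sequence R,GL, S_n} at $x$ is exactly the split exact sequence of Proposition \ref{proposition: exact sequence R, GL, S_n} for $\mathcal{O}_{X,x}$; as kernels and sheaf-theoretic images are computed stalkwise, the sheaf sequence is exact. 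The splitting is induced by the inclusion of permutation matrices $\underline{S_n}\to\mathrm{GL}_n(\mathcal{O}_X)$ (their entries lie in $\{0,1\}\subseteq\mathcal{O}_X(X)$), which splits $g$ because it does so on each $R_U$ by Proposition \ref{proposition: exact sequence R, GL, S_n}.

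The sheaf-axiom and filtered-colimit bookkeeping is routine. The only genuinely load-bearing point beyond Proposition \ref{proposition: exact sequence R, GL, S_n} is ensuring its hypothesis—trivial idempotent pairs—holds where we invoke it: this is exactly where irreducibility is used, through connectedness of the affine opens in $\mathcal{B}$ together with Proposition \ref{proposition: connected implies no idempotet paris}, and through locality of the stalks for the exactness check. Accordingly, the step I expect to need the most care is verifying that the locally constant function defining $g$ is well posed, namely that $g_U(s|_U)$ is independent of the chosen $U\in\mathcal{B}$ and constant near $x$; this is the one spot where one must combine naturality of Proposition \ref{proposition: exact sequence R, GL, S_n} with the triviality of idempotent pairs, and I would treat it as the main (if mild) obstacle.
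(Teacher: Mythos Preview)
Your proposal is correct and follows essentially the same route as the paper: use irreducibility to make every affine open connected, invoke Proposition~\ref{proposition: connected implies no idempotet paris} to get trivial idempotent pairs, apply Proposition~\ref{proposition: exact sequence R, GL, S_n} on each affine open, and glue via its naturality, with the splitting coming from permutation matrices. Your write-up is in fact more careful than the paper's terse argument---in particular, your stalkwise exactness check (with the direct verification that a local semiring has only trivial idempotent pairs) and the explicit construction of $g$ as a locally constant function are details the paper leaves implicit.
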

	\begin{proof}
		Let $U=\Spec R$ be an affine open subset of $X$. Since $X$ is irreducible, $U$ is connected and hence $R$ has only trivial idempotent pairs by Proposition \ref{proposition: connected implies no idempotet paris}. Furthermore, $R$ is zero-sum free from Lemma \ref{lemma: zero-sum trivial idempotent pairs} since from our assumption on $X$, $\mathcal{O}_{X,\mathfrak{p}}$ is zero-sum free for each $\mathfrak{p} \in X$. Furthermore, the map $g$ in \eqref{eq: exact sequence R,GL, S_n} (in Proposition \ref{proposition: exact sequence R, GL, S_n}) is compatible with localization. Now, \eqref{eq: sheaf exact sequence R,GL, S_n} is obtained by gluing \eqref{eq: exact sequence R,GL, S_n} for each affine open subset $U$ of $X$. One can easily check that $g$ is split since for each affine open subset of $X$, $g$ is split from Proposition \ref{proposition: exact sequence R, GL, S_n}, and the splitting is compatible with gluing. 
	\end{proof}

	\begin{mythm}\label{theorem: vector bundle decomposition}
		Let $X$ be an irreducible semiring scheme which is locally isomorphic to $\Spec R$, where $R$ is a zero-sum free semiring with only trivial idempotent pairs. Then any vector bundle of rank $n$ on $X$ is a coproduct of $n$ copies of line bundles on $X$. Moreover, this decomposition is unique up to permuting summands. 
	\end{mythm}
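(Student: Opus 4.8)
The plan is to argue cohomologically, using the split short exact sequence of sheaves of groups supplied by Proposition \ref{proposition: sheafified exact sequence} together with the classification of vector bundles in Theorem \ref{theorem: bundle cohomology bijections}. Since $X$ is irreducible it is connected, and locally of the form $\Spec R$ with $R$ zero-sum free, so \eqref{eq: sheaf exact sequence R,GL, S_n} gives a split exact sequence $1 \to (\mathcal{O}_X^\times)^n \xrightarrow{f} \mathrm{GL}_n(\mathcal{O}_X) \xrightarrow{g} \underline{S_n} \to 1$ in which the kernel $(\mathcal{O}_X^\times)^n = \ker g$ is normal. Feeding this into the standard exact sequence of pointed sets in non-abelian cohomology (see, e.g., \cite{stacks-project}) and using $H^0(X,\underline{S_n}) = S_n$ (as $X$ is connected) and $H^1(X,\underline{S_n}) = 0$ (Lemma \ref{lemma: cohomology vanishing for constant sheaf}), one obtains an exact sequence of pointed sets
\[
H^0(X,\mathrm{GL}_n(\mathcal{O}_X)) \to S_n \xrightarrow{\ \delta\ } H^1(X,(\mathcal{O}_X^\times)^n) \xrightarrow{\ f_*\ } H^1(X,\mathrm{GL}_n(\mathcal{O}_X)) \to 0.
\]
Since cohomology commutes with finite products, $H^1(X,(\mathcal{O}_X^\times)^n) = \Pic(X)^n$, and by Theorem \ref{theorem: bundle cohomology bijections} together with diagram \eqref{eq: diagram for classifying sums of line bundles} the map $f_*$ is identified with the coproduct map $\Phi\colon \Pic(X)^n \to Vect_n(X)$, $(\mathcal{L}_1,\dots,\mathcal{L}_n) \mapsto \bigoplus_{i=1}^n \mathcal{L}_i$. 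Exactness at $H^1(X,\mathrm{GL}_n(\mathcal{O}_X))$ (whose next term is $0$) shows $\Phi$ is surjective; this is exactly the existence of the decomposition.

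For the uniqueness, I would use exactness of the above sequence at $H^1(X,(\mathcal{O}_X^\times)^n) = \Pic(X)^n$, which says that two $n$-tuples of line bundles have the same image under $\Phi$ precisely when they lie in the same orbit of the action of $S_n = H^0(X,\underline{S_n})$ on $\Pic(X)^n$ appearing in the sequence. It then remains to identify this action as ``permute the $n$ factors''. This uses the splitting of \eqref{eq: sheaf exact sequence R,GL, S_n} by permutation matrices: the induced action of $\underline{S_n}$ on $(\mathcal{O}_X^\times)^n$ is conjugation by permutation matrices inside $\mathrm{GL}_n(\mathcal{O}_X)$, which is exactly the coordinate-permutation action of Corollary \ref{corollary: semidirect}, so on $H^1$ it permutes the $n$ factors of $\Pic(X)^n$. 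Hence $\bigoplus_i \mathcal{L}_i \simeq \bigoplus_i \mathcal{L}'_i$ if and only if there is $\sigma \in S_n$ with $\mathcal{L}'_i \simeq \mathcal{L}_{\sigma(i)}$ for all $i$, which is the asserted uniqueness up to permuting summands.

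The main obstacle here is not conceptual but a matter of getting the non-abelian cohomology bookkeeping right: one must confirm that the relevant segment of the exact sequence of pointed sets is available for the normal (but non-central) abelian kernel $(\mathcal{O}_X^\times)^n$, and, most importantly, verify carefully that the $S_n$-action on $\Pic(X)^n$ produced by exactness really is the permutation action --- this is where one unwinds a $1$-cocycle for $(\mathcal{O}_X^\times)^n$ into an $n$-tuple of $\mathcal{O}_X^\times$-cocycles and conjugates by a permutation matrix, invoking Corollary \ref{corollary: invertible matrix structure} and Corollary \ref{corollary: semidirect}. If one wishes to sidestep the cohomological description of this action, uniqueness can instead be obtained directly: given an isomorphism $\bigoplus_i \mathcal{L}_i \xrightarrow{\ \sim\ } \bigoplus_i \mathcal{L}'_i$, pass to a common affine open cover trivializing all the bundles; on each open the transition matrix between the two systems of local generators lies in $\mathrm{GL}_n$ of the sections, hence by Corollary \ref{corollary: invertible matrix structure} has exactly one (unit) nonzero entry per column and so defines a permutation; irreducibility of $X$ forces these local permutations to coincide, yielding a global $\sigma$ for which $\mathcal{L}_i \to \mathcal{L}'_{\sigma(i)}$ is an isomorphism on each open of the cover, hence an isomorphism of sheaves.
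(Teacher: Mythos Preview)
Your proposal is correct and follows essentially the same cohomological approach as the paper: both use the long exact sequence in non-abelian cohomology coming from Proposition \ref{proposition: sheafified exact sequence}, invoke Lemma \ref{lemma: cohomology vanishing for constant sheaf} to get surjectivity of $f_*$, and identify the fibers of $f_*$ with $S_n$-orbits on $\Pic(X)^n$. The only difference is that where you unwind the orbit description by hand (and offer a direct alternative via Corollary \ref{corollary: invertible matrix structure}), the paper simply cites \cite[Proposition 5.3.1]{grothendieck1958general} for the statement that two classes in $H^1(X,(\mathcal{O}_X^\times)^n)$ have the same image in $H^1(X,\mathrm{GL}_n(\mathcal{O}_X))$ if and only if they lie in the same $S_n$-orbit.
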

	\begin{proof}Consider the long exact sequence associated to \eqref{eq: sheaf exact sequence R,GL, S_n}:
		\[
		H^1(X, (\mathcal{O}_X^\times)^n)) \xrightarrow{f_*}
		H^1(X, \emph{GL}_n(\mathcal{O}_X)) \xrightarrow{g_*} 
		H^1(X, \underline{S_n})
		\]
		Note that by diagram \eqref{eq: diagram for classifying sums of line bundles}, $f_*$ is essentially the direct sum map $\Pic(X)^n\rightarrow Vect_n(X)$.
		
		By Lemma \ref{lemma: cohomology vanishing for constant sheaf}, $f_*$ is surjective, so every vector bundle is a sum of line bundles.  By \cite[Proposition 5.3.1]{grothendieck1958general}, two elements of $H^1(X, (\mathcal{O}_X^\times)^n)$ map to the same element of $H^1(X, \emph{GL}_n(\mathcal{O}_X))$ if and only if they are in the same orbit of the action of $S_n$ on $H^1(X, (\mathcal{O}_X^\times)^n)$.  This implies that this decomposition is unique up to permuting summands. 
	\end{proof}
	
	\begin{cor}\label{corollary: vector bundle on affine is trivial}
		Let $M$ be a cancellative monoid and $K$ be an idempotent semifield. Then any vector bundle on $X=\Spec K[M]$ is trivial. 
	\end{cor}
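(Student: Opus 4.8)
The plan is to show that $R := K[M]$ is \emph{local} as a semiring, i.e.\ it possesses a unique maximal ideal $\mathfrak{m}$, and that $\mathfrak{m}$ is prime; the point it defines in $X=\Spec R$ then admits $X$ itself as its only open neighborhood, which immediately forces every locally free sheaf on $X$ to be globally free.

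First I would set $\mathfrak{m} := R\setminus R^\times$ and verify that it is an ideal. Closure under multiplication by $R$ is clear, since $ar\in R^\times$ forces $a\in R^\times$. The point requiring care is closure under addition. Suppose $f,g$ are non-units but $u:=f+g$ is a unit; multiplying by $u^{-1}$ reduces the problem to showing that $p+q=1$ in $K[M]$ with $p,q\neq 0$ makes $p$ and $q$ units. Since $K$ is additively idempotent, so is $K[M]$, hence $K[M]$ is zero-sum free and no cancellation occurs in it; therefore the support of $p+q$ is the union of the supports of $p$ and of $q$. As the identity $1\in K[M]$ is the monomial $1_M$ with coefficient $1_K$, its support is $\{1_M\}$, so $p$ and $q$ are each supported on $\{1_M\}$, i.e.\ $p=a\cdot 1_M$ and $q=b\cdot 1_M$ with $a,b\in K$. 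Zero-sum freeness of $K$ gives $a,b\neq 0_K$, and because $K$ is a semifield this means $a,b\in K^\times$; hence $p,q$ are units, and so are $f=up$ and $g=uq$ — a contradiction. Thus $\mathfrak{m}$ is an ideal. It is proper since $1\notin\mathfrak{m}$, and any proper ideal is disjoint from $R^\times$ and hence contained in $\mathfrak{m}$, so $\mathfrak{m}$ is the unique maximal ideal; it is prime because $R\setminus\mathfrak{m}=R^\times$ is multiplicatively closed and nonempty.

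Next I would extract the topological consequence. Every proper ideal $\mathfrak{a}$ of $R$ satisfies $\mathfrak{a}\subseteq\mathfrak{m}$, so $\mathfrak{m}\in V(\mathfrak{a})$ for every such $\mathfrak{a}$; equivalently, the only open subset of $\Spec R$ containing the point $\mathfrak{m}$ is $\Spec R$ itself. To finish, let $\mathcal{F}$ be a vector bundle on $X$. By definition there is an open neighborhood $U$ of $\mathfrak{m}$ and a finite set $I$ with $\mathcal{F}|_U\simeq\bigoplus_{i\in I}\mathcal{O}_X|_U$; since necessarily $U=X$, we obtain $\mathcal{F}\simeq\bigoplus_{i\in I}\mathcal{O}_X$, so $\mathcal{F}$ is trivial. (Alternatively one may first observe that $R$ has no zero-divisors, as $M$ has none and $K$ is a semifield, so $X$ is irreducible and satisfies the hypotheses of Theorem \ref{theorem: vector bundle decomposition}; then $\mathcal{F}$ is a coproduct of line bundles, and the observation about $\mathfrak{m}$ shows $\Pic(X)=0$, giving the same conclusion.) The main obstacle is the closure of the non-units under addition in the second step: it is the only genuinely semiring-theoretic input, and it is exactly where the assumptions that $K$ is a semifield and that $K[M]$ is additively idempotent (hence zero-sum free) enter.
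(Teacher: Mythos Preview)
Your main argument is correct and takes a genuinely different route from the paper's. The paper's proof cites \cite[Theorem~3.12]{jun2019picard} for $\Pic(X)=0$ and then applies Theorem~\ref{theorem: vector bundle decomposition} to decompose an arbitrary vector bundle as a coproduct of (necessarily trivial) line bundles. You instead show directly that $R=K[M]$ is a \emph{local} semiring: because $K$ is additively idempotent, supports are preserved under addition in $K[M]$, so if a sum equals a unit (after rescaling, equals $1$) each summand must be supported on $\{1_M\}$ and hence lie in $K^\times$. The unique maximal ideal $\mathfrak{m}=R\setminus R^\times$ is then a point whose only open neighborhood is $X$ itself, and trivializing a locally free sheaf near that point already trivializes it globally.

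Your approach is more elementary and self-contained: it needs neither the external Picard computation nor the decomposition theorem, and in fact your main argument never uses that $M$ is cancellative, only that $K$ is an idempotent semifield. (One small slip: the line ``zero-sum freeness of $K$ gives $a,b\neq 0_K$'' is not the right justification---this follows directly from $p,q\neq 0$, which in turn follows from $f,g\neq 0$, a case you should note is forced since $f=0$ would make $g=u$ a unit.) The paper's route, by contrast, exhibits the corollary as an application of Theorem~\ref{theorem: vector bundle decomposition} and ties it to the earlier Picard group work. You do sketch essentially the paper's argument in your closing parenthetical, so you have both proofs in hand.
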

	\begin{proof}
		From \cite[Theorem 3.12]{jun2019picard}, we know that $\Pic(X)$ is trivial. Since $R=K[M]$ satisfies the condition in Theorem \ref{theorem: vector bundle decomposition}, any vector bundle is a product of trivial line bundles, and hence trivial. 
	\end{proof}
	
	\begin{myeg}\label{example: trivial vector bundle}
		Let $M=\mathbb{F}_1\langle x_1,...,x_n \rangle$ and $K=\mathbb{T}$. It follows from Corollary \ref{corollary: vector bundle on affine is trivial} that any vector bundle on the affine space $\mathbb{A}^n_{\mathbb{T}}=\Spec \mathbb{T}[x_1,...,x_n]$ over the tropical semifield $\mathbb{T}$ is trivial. 
	\end{myeg}

	Now, we prove that there is a natural bijection between the set $Vect_n(X)$ of isomorphism classes of vector bundles on a monoid scheme $X$ and the set $Vect_n(X_K)$ of isomorphism classes of vector bundles on a semiring scheme $X_K$, where $X_K$ is the scalar extension of $X$ to an idempotent semifield $K$. To this end, we require that $X$ satisfies the following technical condition.  
	
	\begin{condition}\label{condition: condition on open cover}
		Let $X$ be an irreducible monoid scheme. Suppose that $X$ has an open affine cover $\mathcal{U}=\{U_\alpha\}$ such that any finite intersection of the sets $U_\alpha$ is isomorphic to the prime spectrum of a cancellative monoid. 
	\end{condition}

	\begin{mythm}\label{theorem: bundles stable under scalar extension}
		Let $X$ be an irreducible monoid scheme satisfying Condition \ref{condition: condition on open cover}, and $K$ be an idempotent semifield. Then, there exists a natural bijection between $Vect_n(X)$ and $Vect_n(X_K)$. 
	\end{mythm}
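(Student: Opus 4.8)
The plan is to identify $Vect_n(X)$ with $\Pic(X)^n/S_n$ and $Vect_n(X_K)$ with $\Pic(X_K)^n/S_n$, where in both cases $S_n$ acts by permuting the $n$ factors, and then to invoke the stability of the Picard group under scalar extension, which is exactly the content of our earlier work \cite{jun2019picard} under Condition \ref{condition: condition on open cover}. The first thing to do is to check that $X_K$ satisfies the hypotheses of Theorem \ref{theorem: vector bundle decomposition}. Writing $\mathcal{U}=\{U_\alpha=\Spec M_\alpha\}$ for the cover supplied by Condition \ref{condition: condition on open cover}, so that each $M_\alpha$ is cancellative, $X_K$ is covered by the affine pieces $\Spec K[M_\alpha]$; each $K[M_\alpha]$ is additively idempotent, hence zero-sum free, and $\Spec K[M_\alpha]$ is irreducible because $M_\alpha$ is cancellative (this is part of the proof of Corollary \ref{corollary: vector bundle on affine is trivial}), so $K[M_\alpha]$ has only trivial idempotent pairs by Proposition \ref{proposition: connected implies no idempotet paris}. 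That $X_K$ is itself irreducible follows from a short argument: since $X$ is irreducible, every finite intersection of the $U_\alpha$ is nonempty, hence dense, so the same holds for the finite intersections of the $\Spec K[M_\alpha]$, and irreducibility of a space covered by irreducible opens all of whose finite intersections are nonempty is immediate (two dense opens of an irreducible space meet). Likewise $X$ itself is covered by spectra of cancellative monoids and is irreducible, so the monoid-scheme counterparts of Proposition \ref{proposition: sheafified exact sequence} and of Theorem \ref{theorem: vector bundle decomposition} apply to it, obtained by feeding Pirashvili's exact sequence \eqref{eq: exact seqeunce M, GL, S_n} through the same argument that \eqref{eq: exact sequence R,GL, S_n} is fed through for semiring schemes.

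With this in hand, I would run, on both $X$ and $X_K$, the cohomology long exact sequence of the exact sequence of sheaves of groups $1\to(\mathcal{O}_X^\times)^n\to \mathrm{GL}_n(\mathcal{O}_X)\xrightarrow{g}\underline{S_n}\to 1$ (Proposition \ref{proposition: sheafified exact sequence} on $X_K$, its monoid counterpart on $X$). By Lemma \ref{lemma: cohomology vanishing for constant sheaf} one has $H^1(X,\underline{S_n})=H^1(X_K,\underline{S_n})=0$, so in each case the map $H^1(X,(\mathcal{O}_X^\times)^n)\to H^1(X,\mathrm{GL}_n(\mathcal{O}_X))$ (and its $X_K$-counterpart) is surjective, and by \cite[Proposition 5.3.1]{grothendieck1958general} its fibres are precisely the orbits of the $S_n$-action permuting factors. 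Feeding in the identifications $Vect_n(X)\simeq H^1(X,\mathrm{GL}_n(\mathcal{O}_X))$ and $\Pic(X)^n\simeq H^1(X,(\mathcal{O}_X^\times)^n)$ of Theorem \ref{theorem: bundle cohomology bijections} together with diagram \eqref{eq: diagram for classifying sums of line bundles} (which says $\Pic^n\to Vect_n$ is the direct-sum map), one obtains natural bijections
\[
Vect_n(X)\simeq \Pic(X)^n/S_n,\qquad Vect_n(X_K)\simeq \Pic(X_K)^n/S_n,
\]
compatible with scalar extension on both $\Pic$ and $Vect_n$. Equivalently, the second is just Theorem \ref{theorem: vector bundle decomposition} for $X_K$ repackaged, and the first its monoid analogue for $X$.

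Finally, by \cite{jun2019picard}, Condition \ref{condition: condition on open cover} makes scalar extension $\Pic(X)\to\Pic(X_K)$ a bijection; applied coordinatewise it commutes with the $S_n$-actions, hence induces a bijection $\Pic(X)^n/S_n\to\Pic(X_K)^n/S_n$. Since the square whose horizontal arrows are the direct-sum maps $\Pic^n\to Vect_n$ and whose vertical arrows are scalar extension commutes $S_n$-equivariantly — a routine check from the explicit cocycle constructions in the proof of Theorem \ref{theorem: bundle cohomology bijections} — composing the three bijections gives the desired natural bijection $Vect_n(X)\simeq Vect_n(X_K)$, realized by the scalar-extension functor $\mathcal{F}\mapsto\mathcal{F}\otimes_{\mathcal{O}_X}\mathcal{O}_{X_K}$.

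The hard part is organizational rather than conceptual: one must carefully verify that $X_K$ really does satisfy the hypotheses of Theorem \ref{theorem: vector bundle decomposition} (most delicately, global irreducibility), set up the monoid- and semiring-scheme instances of the sequence $1\to(\mathcal{O}^\times)^n\to\mathrm{GL}_n(\mathcal{O})\to\underline{S_n}\to 1$ and of their cohomology sequences in strict parallel, and keep track that every identification in sight — $Vect_n\simeq\Pic^n/S_n$, the fibre description of $\Pic^n\to Vect_n$, and the Picard isomorphism of \cite{jun2019picard} — is $S_n$-equivariant and intertwined by scalar extension, so that the three bijections compose. Once these compatibilities are pinned down the theorem follows formally from the Picard-group stability result.
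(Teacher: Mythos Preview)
Your proposal is correct and follows essentially the same approach as the paper: identify $Vect_n$ on each side with $\Pic^n/S_n$ via the split exact sequence $1\to(\mathcal{O}^\times)^n\to\mathrm{GL}_n(\mathcal{O})\to\underline{S_n}\to 1$ and the vanishing of $H^1(-,\underline{S_n})$, then invoke the Picard stability of \cite{jun2019picard}. The only difference is cosmetic: where you re-derive the monoid-scheme identification $Vect_n(X)\simeq\Pic(X)^n/S_n$ by running the same cohomological argument through Pirashvili's sequence \eqref{eq: exact seqeunce M, GL, S_n}, the paper simply cites this as \cite[Proposition 3.1]{pirashvili2015cohomology}.
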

	\begin{proof}
		We first claim that $X_K$ is irreducible. In fact, since $X$ is irreducible, $X$ has an affine open cover $\mathcal{U}=\{U_i\}_{i \in I}$ such that $U_i$ is irreducible and $U_i \cap U_j \neq \emptyset$ for all $i,j \in I$. One can easily check that $U_i \times_{\Spec \mathbb{F}_1} K$ is irreducible and $\{U_i \times_{\Spec \mathbb{F}_1} K\}_i$ is an affine open cover (see, \cite[Lemma 3.1]{jun2019picard}). Furthermore, $(U_i \times_{\Spec \mathbb{F}_1} K) \cap (U_j \times_{\Spec \mathbb{F}_1} K) \neq \emptyset$ for all $i,j,$. This shows that $X_K$ is irreducible. 
		
		Now, from \cite[Proposition 3.1]{pirashvili2015cohomology}, we have a bijection $Vect_n(X) \cong (\Pic(X)^n)_{S_n}$, from the set of vector bundles to the set of $S_n$-orbits of $n$-tuples of line bundles.  From \cite[Theorem 3.19]{jun2019picard}, we have a bijection $\Pic(X)\cong \Pic(X_K)$, which induces a bijection $(\Pic(X)^n)_{S_n} \cong (\Pic(X_K)^n)_{S_n}$.  Theorem \ref{theorem: vector bundle decomposition} gives a bijection $(\Pic(X_K)^n)_{S_n} \cong Vect_n(X_K)$.  By composition, we get a bijection $Vect_n(X) \cong Vect_n(X_K)$.
	\end{proof}
	
	In \cite{jun2019picard}, it is shown that if $X$ is an irreducible monoid scheme satisfying Condition \ref{condition: condition on open cover}, then one has the following natural isomorphisms:
	\[
	\Pic(X) \simeq \Pic(X_K) \simeq \Pic(X_k),
	\]
	where $K$ is an idempotent semifield, and $k$ is a field. The isomorphism $\Pic(X)\simeq \Pic(X_k)$ was proved by Flores and Weibel in \cite{flores2014picard}, and we used that result to obtain these isomorphisms. One may ask whether or not a similar result is true for vector bundles, but obviously one cannot have the above correspondence with vector bundles. So, the next question is, with the same notation as Theorem \ref{theorem: bundle cohomology bijections}, whether or not one has the following commutative diagram:
	\begin{equation} \label{eq: commutative diagram vector bundles with a field}
		\begin{tikzcd}
			Vect_n(X_K) \times Vect_m(X_K) \arrow{d}{} \arrow{r}{}
			& Vect_{n+m}(X_K) \arrow{d}{} \\
			Vect_n(X_k) \times Vect_m(X_k)\arrow{r}{}
			& Vect_{n+m}(X_k)
		\end{tikzcd}
	\end{equation}
	But, it turns out that even this is false. There is some linear relation (inside $K_0$ for quasi-projective varieties over a field $k$) between line bundles, i.e. we have non-negative integers $a_i, b_i$ and line bundles $L_i$ such that $\sum a_i L_i = \sum b_i L_i$ in $K_0$. After adding enough to both sides, we get a nontrivial relation that also holds before taking the group completion (think of this as analogous to how $\frac{a}{s}=\frac{b}{t}$ in a localization does not mean $ta=bs$, but means that multiplying both sides by some element of $S$ makes this true).  This result implies that if decompositions into line bundles exist (so that whatever we added to both sides is a sum of line bundles), then such decompositions are not unique. In particular, \eqref{eq: commutative diagram vector bundles with a field} is not well-defined.

	\section{Topological $\mathbb{T}$-vector bundles} \label{section: topological bundles}
	
	In this section, we introduce the notion of topological $\mathbb{T}$-vector bundles on a topological space $X$ equipped with the structure sheaf $\mathcal{O}_X$ of continuous $\mathbb{T}$-valued functions. The main result in this section states that for a connected paracompact Hausdorff space $X$, there is a canonical split surjection from the set of isomorphism classes of topological $\mathbb{T}$-vector bundles of rank $n$ on $X$ to the set of isomorphism classes of $n$-fold covering spaces of $X$ (Theorem \ref{theorem: main theorem of topological bundles}).
	
	Our motivation arises from the two facts: $(1)$ the scheme-theoretic tropicalization realizes the set-theoretic tropicalization as the set of $\mathbb{T}$-rational points, and $(2)$ a priori, the set of $\mathbb{T}$-rational points is merely a set, however, one may impose a topology which is induced from the Euclidean topology of $\mathbb{T}$. Therefore, one may expect certain relationships between vector bundles on tropical schemes and topological $\mathbb{T}$-vector bundles on their sets of $\mathbb{T}$-rational points. We first recall how one can endow a topology on sets of rational points. 
	
	Let $X$ be a scheme and $k$ be a topological field. There is a canonical way to impose a topology on $X(k)$, called the fine Zariski topology. In tropical geometry, the fine Zariski topology has been used to give a homeomorphism between Berkovich analytification and a set of rational points of a scheme over some ``generalized algebraic structures'', for instance, see \cite{giansiracusa2014universal}, \cite{lorscheid2015scheme}, and \cite{jun2017geometry}. 
	
	Let's briefly recall the definition of the fine Zariski topology in perspective of $\mathbb{T}$. Let $X$ be a semiring scheme and $\mathbb{T}$ be equipped with the Euclidean topology. First, we consider when $X$ is an affine, i.e., $X=\Spec A$ for some semiring $A$. In this case, we have
	\[
	X(\mathbb{T})=\Hom(\Spec \mathbb{T}, X)=\Hom (A,\mathbb{T}), 
	\] 
	and hence we have the following:
	\begin{equation}
		X(\mathbb{T})=\Hom(A,\mathbb{T}) \subseteq \prod_{a \in A}\mathbb{T}^{(a)}. 
	\end{equation}
	We impose the product topology on $\prod_{a \in A}\mathbb{T}^{(a)}$ and then impose the subspace topology on $X(\mathbb{T})$. This is called the \emph{affine topology}. In other words, the affine topology is the weakest topology on $\Hom (A,\mathbb{T})$ such that for each $a \in A$, the evaluation map 
	\[
	ev_a:\Hom(A,\mathbb{T}) \to \mathbb{T},\quad f \mapsto f(a) 
	\]
	is continuous. In this case, one can easily check that this topology is functorial in both $A$ and $\mathbb{T}$, that is when $\mathbb{T}$ is replaced with other topological semifields.
	
	Next, consider the case when $X$ is a semiring scheme. The \emph{fine Zariski topology} on $X(\mathbb{T})$ is the finest topology such that any morphism $f_Y:Y \to X$ from an affine semiring scheme $Y$ to $X$ induces the following continuous map 
	\[
	f_Y(\mathbb{T}):Y(\mathbb{T}) \to X(\mathbb{T}),
	\] 
	where $Y(\mathbb{T})$ is equipped with the affine topology. With this, one has the following (see \cite{lorscheid2015scheme}):
	
	\begin{enumerate}
		\item 
		If $X$ is an affine semiring scheme, then the affine topology and the fine topology agree on $X(\mathbb{T})$. 
		\item 
		Let $f:Y \to X$ be a morphism of semiring schemes. Then the induced map,
		\[
		f(\mathbb{T}):Y(\mathbb{T}) \to X(\mathbb{T})
		\]
		is continuous, where $Y(\mathbb{T})$ and $X(\mathbb{T})$ are equipped with the fine Zariski topology. 
		\item 
		If $\{U_i\}$ is an affine open covering of a semiring scheme $X$. Then $\{U_i(\mathbb{T})\}$ is an open covering of $X(\mathbb{T})$. 
		\item 
		The canonical map $p:X(\mathbb{T}) \to X$ is continuous, in other words, the fine Zariski topology is finer than ordinary Zariski topology.
	\end{enumerate}
	\medskip

	Let $X$ be a topological space, by a \emph{topological $\mathbb{T}$-vector bundle}, we mean a topological space $E$ with a continuous map $\pi:E \to X$ such that for each $x \in X$, there exists an open neighborhood $U_x$ of $x$ and $\pi^{-1}(U_x)$ is homeomorphic to $U_x \times \mathbb{T}^n$ for some $n \in \mathbb{N}$. We also require that transition maps to be $\mathbb{T}$-linear.

	\begin{pro}\label{proposition: T vector bundles are locally free}
		Let $X$ be a topological space, and $\mathcal{O}_X$ be the sheaf of continuous $\mathbb{T}$-valued functions.
  There is an equivalence of categories between topological $\mathbb{T}$-vector bundles on $X$ and locally free $\mathcal{O}_X$-modules, which sends a vector bundle to its sheaf of continuous sections.
	\end{pro}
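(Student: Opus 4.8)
The plan is to construct the equivalence directly by exhibiting the two functors and the natural isomorphisms witnessing that they are mutually inverse, mimicking the classical correspondence between topological vector bundles and locally free sheaves of modules over the sheaf of continuous functions. First I would define the functor $\Gamma$ sending a topological $\T$-vector bundle $\pi:E\to X$ to its sheaf of continuous sections $\mathcal{S}_E$, where $\mathcal{S}_E(U)=\{s:U\to E \text{ continuous}\mid \pi\circ s=\mathrm{id}_U\}$. The set $\mathcal{S}_E(U)$ is an $\mathcal{O}_X(U)$-module because addition and $\T$-scalar multiplication are performed fiberwise using the $\T$-semimodule structure on each fiber $\pi^{-1}(x)\cong\T^n$, and these operations are continuous since $\oplus=\max$ and $\odot=+$ are continuous on $\T$ in the Euclidean topology; restriction maps make this a presheaf, and the sheaf axiom is the usual gluing of continuous sections. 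Local triviality of $E$ over an open cover $\{U_i\}$ with trivializations $\pi^{-1}(U_i)\cong U_i\times\T^n$ immediately gives $\mathcal{S}_E|_{U_i}\cong (\mathcal{O}_X|_{U_i})^n$, so $\mathcal{S}_E$ is locally free. On morphisms, a bundle map $E\to E'$ over $X$ induces postcomposition $\mathcal{S}_E\to\mathcal{S}_{E'}$, and this is clearly functorial.

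Next I would construct a quasi-inverse $\mathcal{F}\mapsto E_{\mathcal{F}}$. Given a locally free $\mathcal{O}_X$-module $\mathcal{F}$, choose an open cover $\{U_i\}$ with isomorphisms $\varphi_i:\mathcal{F}|_{U_i}\xrightarrow{\sim}(\mathcal{O}_X|_{U_i})^n$; the transition data are $\T$-linear automorphisms of $(\mathcal{O}_X|_{U_i\cap U_j})^n$, hence matrices in $\mathrm{GL}_n(\mathcal{O}_X(U_i\cap U_j))$, i.e. families of continuous $\T$-valued functions, satisfying the cocycle condition. One then builds $E_{\mathcal{F}}$ by gluing the spaces $U_i\times\T^n$ along the homeomorphisms $(x,v)\mapsto(x,g_{ij}(x)v)$ — here one must check $g_{ij}(x)$ acts as a homeomorphism of $\T^n$, which follows from Corollary \ref{corollary: invertible matrix structure}: an element of $\mathrm{GL}_n$ over a zero-sum free semiring with only trivial idempotent pairs (and $\T$, being an idempotent semifield without zero-divisors, qualifies, as does $\mathcal{O}_{X,x}$ pointwise — though here one can simply observe that scaling-and-permutation maps are visibly homeomorphisms of $\T^n$) — so the glued space is a genuine topological $\T$-vector bundle. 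Independence of the chosen trivialization, up to canonical isomorphism, is the standard cocycle argument, and this makes $\mathcal{F}\mapsto E_{\mathcal{F}}$ a functor.

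Finally I would exhibit the natural isomorphisms $E\cong E_{\mathcal{S}_E}$ and $\mathcal{F}\cong\mathcal{S}_{E_{\mathcal{F}}}$. For the first, the trivializations of $E$ over $\{U_i\}$ produce exactly the cocycle used to reconstruct $E_{\mathcal{S}_E}$, and the resulting map is a fiberwise-linear homeomorphism over $X$. For the second, over each $U_i$ the sections of $U_i\times\T^n$ are precisely continuous maps $U_i\to\T^n$, i.e. $(\mathcal{O}_X(U_i))^n\cong\mathcal{F}(U_i)$ via $\varphi_i$, and these local identifications are compatible with the gluing, hence patch to an isomorphism of sheaves; naturality in both cases is routine. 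The main obstacle I anticipate is the point-set bookkeeping in the gluing construction of $E_{\mathcal{F}}$: verifying that the quotient topology on $\coprod_i U_i\times\T^n$ modulo the cocycle relation is well-behaved (Hausdorffness of fibers, local triviality of the projection, continuity of the algebraic operations) and, crucially, that $g_{ij}(x)\in\mathrm{GL}_n(\T)$ genuinely induces a homeomorphism of $\T^n$ rather than merely a continuous bijection — which is where Corollary \ref{corollary: invertible matrix structure} is used to guarantee the inverse is again of scaling-permutation type and hence continuous. Everything else is the classical argument transported verbatim to the semiring setting.
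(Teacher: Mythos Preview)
Your proposal is correct and follows essentially the same approach as the paper: construct the sections functor and a quasi-inverse by gluing trivial pieces $U_i\times\mathbb{T}^n$ along transition cocycles, then check the two constructions are mutually inverse. You are in fact more careful than the paper on one point it leaves implicit---that each $g_{ij}(x)\in\mathrm{GL}_n(\mathbb{T})$ acts as a homeomorphism of $\mathbb{T}^n$---and your appeal to the scaling-permutation structure of invertible matrices over $\mathbb{T}$ is exactly the right justification.
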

	\begin{proof}
		Let $\mathbf{Vect}(X)$ be the category of topological $\mathbb{T}$-vector bundles on $X$ and $\mathbf{S}(X)$ be the category of locally free $\mathcal{O}_X$-modules on $X$. For each $\pi:E \to X$ in $\mathbf{Vect}(X)$, as in the classical case, we can define a presheaf $\mathcal{E}$; for each open subset $U$ of $X$, $\mathcal{E}(U)$ is the set of continuous sections on $U$. In fact, one can easily see that $\mathcal{E}(U)$ is an $\mathcal{O}_X(U)$-module and the module structure is compatible with restriction maps, and hence $\mathcal{E}$ is indeed a sheaf. This construction is functorial, so we have a functor $F:\mathbf{Vect}(X) \to \mathbf{S}(X)$ sending $E$ to $\mathcal{E}$. 
		
		Conversely, if $\mathcal{E}$ is a locally free sheaf over $\mathcal{O}_X$, then for each $x \in X$, we have an open neighborhood $U_x$ such that $\mathcal{E}(U) \simeq \mathcal{O}_X(U)^n$ for some $n$. Since $\mathcal{O}_X(U)$ is the semiring of continuous functions from $U$ to $\mathbb{T}$, we can see $\mathcal{O}_X(U)^n$ is nothing but the set of continuous sections of a trivial topological $\mathbb{T}$-vector bundle $E_U=U \times \mathbb{T}^n \to U$. Now, one may observe that $E_U$ are glued to obtain a topological $\mathbb{T}$-vector bundle $\pi:E \to X$. As this construction is functorial, this defines a functor $G:\mathbf{S}(X) \to \mathbf{Vect}(X)$. Finally, it is clear that $G$ is a quasi-inverse of $F$, showing the equivalence of categories. 
	\end{proof}

	\begin{lem}\label{lemma: topological bundle lemma}
		Let $X$ be a topological space, and $C(X,\mathbb{T})$ be the set of continuous $\mathbb{T}$-valued functions. Then continuous $\textrm{GL}_n(\mathbb{T})$-valued functions on $X$ are the same as invertible matrices with entries in $C(X,\mathbb{T})$. 
	\end{lem}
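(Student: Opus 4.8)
The plan is to unwind both sides of the asserted identification to the same concrete data --- an $n\times n$ array of elements of $C(X,\mathbb{T})$ --- and then check that the two invertibility conditions match. First I would fix the topology on $\textrm{GL}_n(\mathbb{T})$ to be the subspace topology it inherits from $M_n(\mathbb{T})=\mathbb{T}^{n^2}$ with the product (Euclidean) topology. Since $\mathbb{T}^{n^2}$ carries the product topology, a continuous map $f\colon X\to \textrm{GL}_n(\mathbb{T})$ is exactly a continuous map $X\to\mathbb{T}^{n^2}$ whose image lands in $\textrm{GL}_n(\mathbb{T})$, i.e.\ an $n\times n$ matrix $A=(f_{ij})$ with $f_{ij}\in C(X,\mathbb{T})$ such that $A(x):=(f_{ij}(x))\in\textrm{GL}_n(\mathbb{T})$ for every $x\in X$. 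So the lemma reduces to the claim that a matrix $A\in M_n(C(X,\mathbb{T}))$ is invertible in $M_n(C(X,\mathbb{T}))$ if and only if $A(x)$ is invertible over $\mathbb{T}$ for every $x$.

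The ``only if'' direction is immediate: if $AB=BA=I$ in $M_n(C(X,\mathbb{T}))$, then evaluating at $x$ gives $A(x)B(x)=B(x)A(x)=I$. For the ``if'' direction, the crux is that $x\mapsto A(x)^{-1}$ again has entries in $C(X,\mathbb{T})$, i.e.\ that inversion $\textrm{GL}_n(\mathbb{T})\to\textrm{GL}_n(\mathbb{T})$ is continuous. To see this I would invoke Corollary \ref{corollary: invertible matrix structure} (applicable since $\mathbb{T}$ is an idempotent semifield, hence zero-sum free with only trivial idempotent pairs): every matrix in $\textrm{GL}_n(\mathbb{T})$ has exactly one nonzero entry in each column, so $\textrm{GL}_n(\mathbb{T})$ is the disjoint union over $\sigma\in S_n$ of the sets $U_\sigma$ of matrices supported on $\{(\sigma(i),i)\}_i$. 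Each $U_\sigma$ is cut out inside $\textrm{GL}_n(\mathbb{T})$ by the open conditions $A_{\sigma(i)i}\neq-\infty$, hence is clopen, and $A\mapsto(A_{\sigma(i)i})_i$ is a homeomorphism $U_\sigma\cong\mathbb{R}^n$ --- this is exactly the topological incarnation of the splitting $\textrm{GL}_n(\mathbb{T})\cong(\mathbb{T}^\times)^n\rtimes S_n$ of Corollary \ref{corollary: semidirect}. On $U_\sigma$, inversion sends the matrix with entries $u_i$ at $(\sigma(i),i)$ to the matrix with entries $-u_{\sigma^{-1}(j)}$ at $(\sigma^{-1}(j),j)$, which is manifestly a continuous map $U_\sigma\to U_{\sigma^{-1}}$; since the $U_\sigma$ are clopen, inversion is continuous on all of $\textrm{GL}_n(\mathbb{T})$.

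With continuity of inversion established, the rest is bookkeeping: given $A$ with $A(x)\in\textrm{GL}_n(\mathbb{T})$ for all $x$, the matrix $B$ defined by $B_{ij}(x):=(A(x)^{-1})_{ij}$ has entries in $C(X,\mathbb{T})$ (a composition of continuous maps), and $AB=BA=I$ in $M_n(C(X,\mathbb{T}))$ because these identities hold after evaluating at each $x$ and matrix multiplication over $C(X,\mathbb{T})$ is computed pointwise. Finally, the two assignments are visibly mutually inverse and compatible with the group structures, yielding the stated identification. The one genuinely non-formal step is the continuity of inversion on $\textrm{GL}_n(\mathbb{T})$, which is exactly where the rigidity of invertible matrices over idempotent semifields (Corollaries \ref{corollary: invertible matrix structure} and \ref{corollary: semidirect}) does the work; everything else is routine manipulation of the product topology.
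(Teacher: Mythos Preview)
Your proof is correct and follows essentially the same approach as the paper's: both identify the continuity of inversion on $\textrm{GL}_n(\mathbb{T})$ as the only nontrivial step, and both establish it by decomposing $\textrm{GL}_n(\mathbb{T})$ into the open pieces indexed by $S_n$ coming from Corollary~\ref{corollary: invertible matrix structure}. Your write-up is more explicit than the paper's (which simply asserts ``one can check continuity on these open sets''): you spell out that the $U_\sigma$ are clopen, identify each with $\mathbb{R}^n$, and give the explicit formula for the inverse on each piece.
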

	\begin{proof}
		Let $f:X \to \textrm{GL}_n(\mathbb{T})$ be a continuous function. We first claim that the automorphism $S: \textrm{GL}_n(\mathbb{T}) \to \textrm{GL}_n(\mathbb{T})$ sending $M$ to $M^{-1}$ is continuous. In fact, for $\sigma \in S_n$, there is an open set in $\textrm{GL}_n(\mathbb{T})$ such that $a_{i\sigma(i)}\neq 0$ for each $i$, and one can check continuity on these open sets. We also have continuous projections $\pi_{ij}:\textrm{GL}_n(\mathbb{T}) \to \mathbb{T}$ sending $(a_{ij})$ to $a_{ij}$. By composing the projections $\pi_{ij}$ and $f$, we obtain continuous functions $f_{ij}:X \to \mathbb{T}$, and hence we obtain a matrix $M_f=(f_{ij}) \in \textrm{M}_n(C(X,\mathbb{T}))$. But, one can easily see that $M_{Sf}$ is the inverse of $M_f$ in $\textrm{M}_n(C(X,\mathbb{T}))$. In particular, $M_f \in \textrm{GL}_n(C(X,\mathbb{T}))$. This provides the desired correspondence.  Constructing a continuous function with values in $\textrm{GL}_n(\mathbb{T})$ from an element of $\textrm{GL}_n(C(X,\mathbb{T}))$ can be done similarly, and the constructions are clearly inverse to each other.
	\end{proof}
	
	\begin{lem}\label{lemma: topological bundle lemma2}
		Let $X$ be a topological space. Consider $C(X,\mathbb{T})$ as a semiring with addition and multiplication induced by those of $\mathbb{T}$. Then $X$ is connected if and only if any idempotent pair of $C(X,\mathbb{T})$ is trivial.
	\end{lem}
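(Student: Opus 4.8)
The plan is to set up a direct bijection between idempotent pairs of $C(X,\mathbb{T})$ and clopen subsets of $X$, and then simply read off the stated equivalence. Recall that in $\mathbb{T}$ the additive identity $0_{\mathbb{T}}$ is $-\infty$ and the multiplicative identity $1_{\mathbb{T}}$ is $0\in\mathbb{R}$, and that the operations on $C(X,\mathbb{T})$ are computed pointwise. So if $(e,f)$ is an idempotent pair of $C(X,\mathbb{T})$, then for every $x\in X$ we have $e(x)\odot f(x)=-\infty$ and $e(x)\oplus f(x)=0$; the first equation forces $e(x)=-\infty$ or $f(x)=-\infty$, and since $\max(e(x),f(x))=0$ they cannot both be $-\infty$, so exactly one of $e(x),f(x)$ equals $-\infty$ and the other equals $0$. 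In particular $e$ takes values only in $\{-\infty,0\}$ and $f$ is obtained from $e$ by swapping these values.

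The first step is to turn such a pair into a clopen set. Since $\mathbb{T}$ carries the Euclidean topology, both $\{0\}$ and $\{-\infty\}$ are closed in $\mathbb{T}$; as $e$ is continuous and $\{-\infty,0\}$-valued, the set $U:=e^{-1}(\{0\})$ and its complement $X\setminus U=e^{-1}(\{-\infty\})$ are both closed, so $U$ is clopen. Conversely, given a clopen $U\subseteq X$, I would define $e_U,f_U\in C(X,\mathbb{T})$ by $e_U\equiv 0$ on $U$, $e_U\equiv-\infty$ on $X\setminus U$, and $f_U$ the complementary function; continuity is immediate because the preimage under $e_U$ (resp. $f_U$) of any subset of $\mathbb{T}$ is one of $\emptyset,U,X\setminus U,X$, according to whether that subset contains $0$ and/or $-\infty$, and each of these is open. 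A one-line check gives $e_U\odot f_U=0_{\mathbb{T}}$ and $e_U\oplus f_U=1_{\mathbb{T}}$. The two assignments $(e,f)\mapsto U$ and $U\mapsto(e_U,f_U)$ are mutually inverse, yielding a bijection between idempotent pairs of $C(X,\mathbb{T})$ and clopen subsets of $X$.

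To conclude, observe that under this bijection a pair $(e,f)$ is trivial, i.e. $e,f\in\{0_{\mathbb{T}},1_{\mathbb{T}}\}$, precisely when the associated clopen set $U$ equals $\emptyset$ or $X$. Hence ``every idempotent pair of $C(X,\mathbb{T})$ is trivial'' is equivalent to ``the only clopen subsets of $X$ are $\emptyset$ and $X$'', which is exactly the statement that $X$ is connected; this proves both implications simultaneously.

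I do not expect a genuine obstacle here: the only points requiring care are (i) deducing from the pointwise equations that $e$ is $\{-\infty,0\}$-valued, and (ii) the elementary topological bookkeeping that $\{-\infty\}$ and $\{0\}$ are closed in the Euclidean topology on $\mathbb{T}$, so that level sets of $e$ are clopen and, dually, that the functions $e_U,f_U$ built from a clopen set are continuous. One could instead try to derive the forward direction from Proposition \ref{proposition: connected implies no idempotet paris}, but that would require relating the connectedness of $X$ to that of $\Spec C(X,\mathbb{T})$, an unnecessary detour; the direct argument above is cleaner and self-contained.
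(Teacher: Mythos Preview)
Your proof is correct and follows essentially the same approach as the paper: both arguments reduce to the observation that the pointwise equations force $e$ and $f$ to be $\{0_{\mathbb{T}},1_{\mathbb{T}}\}$-valued, and then use the characterization of connectedness via clopen sets (equivalently, via continuous maps to a two-point discrete space). Your packaging of this as an explicit bijection between idempotent pairs and clopen subsets is slightly more detailed than the paper's version, which simply notes that $\{0,1\}$-valued continuous functions on a connected space are constant and that a disconnection yields a nontrivial pair, but the underlying idea is identical.
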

	\begin{proof}
		Suppose that $(e,f)$ is an idempotent pair of $C(X,\mathbb{T})$. Then, one can easily see that $(e(x),f(x))$ is an idempotent pair of $\mathbb{T}$ for each $x \in X$. In particular, $e$ and $f$ are continuous $\{0,1\}$-valued functions, and hence they should be constant functions by connectedness of $X$. Now, one can see that $e,f \in \{0,1\}$. The converse is clear since on a disconnected space $X$, one can build a complementary pair of continuous $\{0,1\}$-valued functions giving us a nontrivial idempotent pair. 
	\end{proof}
	
	The following is an analogue of Proposition \ref{proposition: sheafified exact sequence} for topological $\mathbb{T}$-vector bundles. 
	
	\begin{pro}\label{proposition: topological bundle proposition}
		Let $X$ be a locally connected topological space. Then, we have the following split exact sequence:
		\[
		\begin{tikzcd}
			0 \arrow[r] &
			(\mathcal{O}_X^\times)^n \arrow[r,"f"]&
			\emph{GL}_n(\mathcal{O}_X) \arrow[r,"g"]
			& \underline{S_n}\arrow[r]
			& 0
		\end{tikzcd}
		\]
	\end{pro}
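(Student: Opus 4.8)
The plan is to transcribe the proof of Proposition~\ref{proposition: sheafified exact sequence} to the topological setting, replacing the affine opens $\Spec R$ by connected opens $U$ and the semiring $R$ by $\mathcal{O}_X(U) = C(U,\mathbb{T})$. Since $X$ is locally connected, the connected open sets form a basis for the topology, so it suffices to construct the sequence on connected opens, check it is compatible with restriction along inclusions of connected opens, and then read off exactness and the splitting on this basis (equivalently, on stalks).

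First I would fix a connected open $U\subseteq X$ and record two facts about $R:=\mathcal{O}_X(U)=C(U,\mathbb{T})$. It is zero-sum free: the operations of $C(U,\mathbb{T})$ are pointwise, and $\mathbb{T}$ is zero-sum free, so $f+g=0$ (the constant function with value the zero of $\mathbb{T}$) forces $f=g=0$. And by Lemma~\ref{lemma: topological bundle lemma2}, since $U$ is connected, $R$ has only trivial idempotent pairs. Hence Proposition~\ref{proposition: exact sequence R, GL, S_n} applies to $R$, giving a split short exact sequence $0\to (R^\times)^n\to \mathrm{GL}_n(R)\to S_n\to 0$ with $f$ the diagonal map and $g$ the ``location of the nonzero entries'' permutation. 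Using Lemma~\ref{lemma: topological bundle lemma} to identify $\mathrm{GL}_n(R)$ with the group of continuous $\mathrm{GL}_n(\mathbb{T})$-valued functions on $U$, i.e. with $\mathrm{GL}_n(\mathcal{O}_X)(U)$, and noting that $(R^\times)^n=(\mathcal{O}_X^\times)^n(U)$ while $S_n=\underline{S_n}(U)$ because $U$ is connected, this is exactly the asserted sequence evaluated on $U$.

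Next I would verify compatibility with restriction along an inclusion $V\subseteq U$ of connected opens. This is the naturality in $R$ established in Proposition~\ref{proposition: exact sequence R, GL, S_n}, applied to the semiring homomorphism $C(U,\mathbb{T})\to C(V,\mathbb{T})$ (both source and target are semirings of continuous $\mathbb{T}$-valued functions on connected spaces, hence satisfy the standing hypotheses). Concretely, $f$ commutes with restriction since it is the diagonal inclusion, and $g$ commutes with restriction because, by Corollary~\ref{corollary: invertible matrix structure}, the unique nonzero entry in each column of an invertible matrix over $C(U,\mathbb{T})$ is a unit, hence restricts to a nowhere-vanishing (indeed invertible) function on $V$, so the associated permutation is unchanged. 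Assembling over the basis of connected opens and sheafifying — observing that the presheaf $U\mapsto S_n$ on connected opens sheafifies to the constant sheaf $\underline{S_n}$, while $(\mathcal{O}_X^\times)^n$ and $\mathrm{GL}_n(\mathcal{O}_X)$ are already sheaves — produces a sequence of sheaves of groups $0\to(\mathcal{O}_X^\times)^n\xrightarrow{f}\mathrm{GL}_n(\mathcal{O}_X)\xrightarrow{g}\underline{S_n}\to 0$.

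Finally, for exactness I would pass to stalks: the stalk at $x$ is a filtered colimit of the sequences above over connected neighborhoods of $x$ (cofinal by local connectedness), and filtered colimits of short exact sequences of groups are short exact, so the sheaf sequence is exact. For the splitting, the section $S_n\hookrightarrow\mathrm{GL}_n(R)$ by permutation matrices is compatible with restriction (permutation matrices are locally constant, with entries in $\{0_{\mathbb{T}},1_{\mathbb{T}}\}$, and map to permutation matrices under any base change), so these glue to a morphism of sheaves $\underline{S_n}\to\mathrm{GL}_n(\mathcal{O}_X)$ splitting $g$. The point requiring the most care is the bookkeeping separating sheaves from presheaves — in particular that $g$ is well defined and respects restriction, for which the trivial-idempotent-pairs hypothesis (via Corollary~\ref{corollary: invertible matrix structure}) is essential, exactly as in Proposition~\ref{proposition: sheafified exact sequence}, and that $\underline{S_n}$ is the constant sheaf rather than the constant presheaf. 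Everything else is a direct transcription of the algebraic statement through Lemmas~\ref{lemma: topological bundle lemma} and~\ref{lemma: topological bundle lemma2}.
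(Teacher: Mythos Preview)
Your proposal is correct and follows essentially the same route as the paper: apply Proposition~\ref{proposition: exact sequence R, GL, S_n} to $R=C(U,\mathbb{T})$ for connected opens $U$, using Lemma~\ref{lemma: topological bundle lemma2} for the trivial-idempotent-pairs hypothesis and Lemma~\ref{lemma: topological bundle lemma} to identify $\mathrm{GL}_n(\mathcal{O}_X)(U)$, then invoke local connectedness to reduce exactness to this basis. The only cosmetic difference is that the paper first writes down the maps on \emph{all} opens by applying the functor $C(U,-)$ to the split exact sequence $0\to(\mathbb{T}^\times)^n\to\mathrm{GL}_n(\mathbb{T})\to S_n\to 0$ (so naturality in $U$ is automatic), and only afterwards restricts to connected opens to verify exactness; you instead build everything on the basis of connected opens and sheafify, checking naturality via Corollary~\ref{corollary: invertible matrix structure}. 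Both arrive at the same place with the same ingredients.
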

	\begin{proof}
		We first note that for an open subset $U$ of $X$, we have $(\mathcal{O}_X^\times(U))^n=C(U,(\mathbb{T}^\times)^n)$, $\emph{GL}_n(\mathcal{O}_X(U))=C(U,\emph{GL}_n(\mathbb{T}))$ by Lemma \ref{lemma: topological bundle lemma}, and $\underline{S_n}=C(U,S_n)$. From Proposition \ref{proposition: exact sequence R, GL, S_n}, we have the following split exact sequence:
		\begin{equation}\label{eq: exseq}
			\begin{tikzcd}
				0 \arrow[r] &
				(\mathbb{T}^\times)^n \arrow[r,"\varphi"]&
				\emph{GL}_n(\mathbb{T}) \arrow[r,"\psi"]
				& S_n\arrow[r]
				& 0
			\end{tikzcd}
		\end{equation}
		Now, from \eqref{eq: exseq}, we have the following induced sequence of maps. Furthermore the map $\psi_*$ splits.
		\begin{equation}\label{eq: sheaf ex1}
			\begin{tikzcd}
				0 \arrow[r] &
				C(U,(\mathbb{T}^\times)^n) \arrow[r,"\varphi_*"]&
				C(U,\emph{GL}_n(\mathbb{T})) \arrow[r,"\psi_*"]
				& C(U,S_n)\arrow[r]
				& 0
			\end{tikzcd}
		\end{equation}
		where $C(U,Y)$ denoted the group of continuous functions from $U$ to $Y$. Moreover the maps will be natural in $U$ and therefore compatible with restriction maps. In particular, we have the following sequence of sheaves:
		\begin{equation}\label{eq: sheaf ex}
			\begin{tikzcd}
				0 \arrow[r] &
				(\mathcal{O}_X^\times)^n \arrow[r,"f"]&
				\emph{GL}_n(\mathcal{O}_X) \arrow[r,"g"]
				& \underline{S_n}\arrow[r]
				& 0
			\end{tikzcd}
		\end{equation}
		Now we need to check the sequence of sheaves \eqref{eq: sheaf ex} is exact. It is enough to show that we have an exact sequence of sections over each connected open subset of $X$ since $X$ is locally connected.
		Let $U$ be a connected open subset of $X$. Clearly the semiring $R_U:=C(U,\mathbb{T})$ is zero-sum free, and from Lemma \ref{lemma: topological bundle lemma2} $R_U$ has only trivial idempotent pairs. In particular, we can apply Proposition \ref{proposition: exact sequence R, GL, S_n} to obtain the following exact sequence:
		\[
		\begin{tikzcd}
			0 \arrow[r] &
			C(U,(\mathbb{T}^\times)^n) \arrow[r,"\varphi_*|_U"]&
			C(U,\emph{GL}_n(\mathbb{T})) \arrow[r,"\psi_*|_U"]
			& C(U,S_n)\arrow[r]
			& 0
		\end{tikzcd}
		\]
		This shows that the sequence of sheaves \eqref{eq: sheaf ex} is exact. 
	\end{proof}

	\begin{mythm}\label{theorem: main theorem of topological bundles}
		Let $X$ be a locally connected paracompact Hausdorff space.  Then there is a canonical split surjection from the set of isomorphism classes of topological $\mathbb{T}$-vector bundles of rank $n$ on $X$ to the set of isomorphism classes of $n$-fold covering spaces of $X$.  Furthermore, a topological $\mathbb{T}$-vector bundle is trivial if and only if its associated covering space is trivial.
	\end{mythm}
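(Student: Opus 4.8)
The plan is to package everything through the split short exact sequence of sheaves of groups provided by Proposition~\ref{proposition: topological bundle proposition} and read the statement off its long exact cohomology sequence. First I would pass from bundles to cohomology on both sides. By Proposition~\ref{proposition: T vector bundles are locally free}, isomorphism classes of rank-$n$ topological $\mathbb{T}$-vector bundles on $X$ coincide with isomorphism classes of rank-$n$ locally free $\mathcal{O}_X$-modules, i.e.\ with $Vect_n(X)$ for the semiringed space $(X,\mathcal{O}_X)$, and Theorem~\ref{theorem: bundle cohomology bijections} identifies this set with $H^1(X,\mathrm{GL}_n(\mathcal{O}_X))$ (\v{C}ech cohomology, which agrees with sheaf cohomology since $X$ is paracompact). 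On the other side, an $n$-fold covering space of $X$ is a sheaf locally isomorphic to the constant sheaf $\underline{\{1,\dots,n\}}$, whose automorphism sheaf is $\underline{S_n}$; hence isomorphism classes of $n$-fold covering spaces of $X$ are classified by $H^1(X,\underline{S_n})$.

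Next I apply $H^1(X,-)$ to the sequence $0\to(\mathcal{O}_X^\times)^n\xrightarrow{f}\mathrm{GL}_n(\mathcal{O}_X)\xrightarrow{g}\underline{S_n}\to 0$ of Proposition~\ref{proposition: topological bundle proposition}, obtaining an exact sequence of pointed sets
\[
H^1(X,(\mathcal{O}_X^\times)^n)\xrightarrow{f_*}H^1(X,\mathrm{GL}_n(\mathcal{O}_X))\xrightarrow{g_*}H^1(X,\underline{S_n}).
\]
The sheaf splitting $s\colon\underline{S_n}\to\mathrm{GL}_n(\mathcal{O}_X)$ given by the inclusion of permutation matrices induces $s_*$ with $g_*\circ s_*=\mathrm{id}$, so $g_*$ is a split surjection, and it is canonical since it is induced by the canonical maps $g$ and $s$. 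Under the identifications above, $g_*$ is precisely the map sending a $\mathbb{T}$-vector bundle to its associated covering space: if $\{\theta_{ij}\}$ is a transition cocycle over an open cover by connected sets, then each $C(U_i\cap U_j,\mathbb{T})$ is zero-sum free with only trivial idempotent pairs by Lemma~\ref{lemma: topological bundle lemma2}, Corollary~\ref{corollary: invertible matrix structure} attaches to each $\theta_{ij}$ a well-defined underlying permutation, and $\{g(\theta_{ij})\}$ is the transition cocycle of the covering space.

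It remains to verify the ``trivial iff trivial'' statement and to locate the essential difficulty. One direction is immediate: $g_*$ carries the class of the trivial bundle to the class of the trivial cover since both are the basepoints. Conversely, if the cover associated to a bundle $E$ is trivial, then $g_*[E]$ is the basepoint, and by exactness of the sequence of pointed sets above (where ``exact at the middle term'' means the preimage of the basepoint is exactly the image of $f_*$) the class $[E]$ lies in the image of $f_*$; so it suffices to show $H^1(X,(\mathcal{O}_X^\times)^n)=0$. This is where paracompactness is genuinely used: the group of units $\mathbb{T}^\times$ is $(\mathbb{R},+)$, so $\mathcal{O}_X^\times$ is the sheaf of continuous $\mathbb{R}$-valued functions on $X$, which is a module over the soft sheaf of rings of continuous real functions on the paracompact Hausdorff space $X$, hence soft and therefore acyclic, giving $H^1(X,(\mathcal{O}_X^\times)^n)=0$ and forcing $E$ to be trivial. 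Thus the work is largely organizational; the two points needing care are this acyclicity argument and the careful bookkeeping with basepoints in non-abelian \v{C}ech cohomology, together with spelling out the classification of $n$-fold covers by $H^1(X,\underline{S_n})$ and its compatibility with the concrete ``underlying permutation'' description of $g_*$ (for which local connectedness, already needed in Proposition~\ref{proposition: topological bundle proposition}, ensures the transition cocycle can be taken over connected opens where Corollary~\ref{corollary: invertible matrix structure} applies).
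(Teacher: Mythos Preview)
Your proposal is correct and follows essentially the same route as the paper: identify both sides with $H^1$ via Proposition~\ref{proposition: T vector bundles are locally free} and Theorem~\ref{theorem: bundle cohomology bijections}, apply the long exact sequence of pointed sets coming from the split short exact sequence of Proposition~\ref{proposition: topological bundle proposition}, and kill the left term using $\mathbb{T}^\times=\mathbb{R}$ and paracompactness. Your write-up merely adds a bit more detail than the paper on the concrete description of $g_*$ and on the softness/acyclicity argument.
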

	\begin{proof}
		By Proposition \ref{proposition: T vector bundles are locally free} and Theorem \ref{theorem: bundle cohomology bijections}, vector bundles of rank $n$ are classified by $H^1(X, \emph{GL}_n(\mathcal{O}_X))$.  It is well-known that $n$-fold covering spaces are the same as fiber bundles whose fiber is a discrete space of cardinality $n$, and the proof of Theorem \ref{theorem: bundle cohomology bijections} shows these can be classified by $H^1(X, \underline{S_n})$.  Thus we must construct a map $H^1(X, \emph{GL}_n(\mathcal{O}_X)) \rightarrow H^1(X, \underline{S_n})$ with the stated properties.
		
		Proposition \ref{proposition: topological bundle proposition} gives us a split exact sequence
		\[
		\begin{tikzcd}
			0 \arrow[r] &
			(\mathcal{O}_X^\times)^n \arrow[r,"f"]&
			\emph{GL}_n(\mathcal{O}_X) \arrow[r,"g"]
			& \underline{S_n}\arrow[r]
			& 0
		\end{tikzcd}
		\]
		Taking cohomology gives the following exact sequence of pointed sets, in which the last arrow is a split surjection.
		\[
		\begin{tikzcd}
			H^1(X, (\mathcal{O}_X^\times)^n) \arrow[r,"f_*"]&
			H^1(X, \emph{GL}_n(\mathcal{O}_X)) \arrow[r,"g_*"]
			& H^1(X, \underline{S_n})
		\end{tikzcd}
		\]
        More precisely, we have a sequence of functions such that the image of $f_*$ is equal to the inverse image of the base point of $H^1(X,\underline{S_n})$ by $g_*$ which we call the kernel of $g_*$.
		Since $\mathbb{T}^\times = \mathbb{R}$, $\mathcal{O}_X^\times$ is the sheaf of continuous $\mathbb{R}$-valued functions under usual addition of real numbers. By paracompactness, there is a continuous partition of unity, which makes $\mathcal{O}_X^\times$ acyclic. It follows that 
		\[
		H^1(X, (\mathcal{O}_X^\times)^n) = 0.
		\]
		We have constructed a split surjection $H^1(X, \emph{GL}_n(\mathcal{O}_X)) \rightarrow H^1(X, \underline{S_n})$ of pointed sets with trivial kernel.  Identifying $H^1(X, \emph{GL}_n(\mathcal{O}_X))$ with the pointed set of isomorphism classes of rank $n$ vector bundles and $H^1(X, \underline{S_n})$ with the pointed set of isomorphism classes of $n$-fold covering spaces completes the proof.
	\end{proof}

	\begin{cor}\label{corollary: 1}
		Let $X$ be a locally connected paracompact Hausdorff space.  Then all topological $\mathbb{T}$-line bundles on $X$ are trivial.
	\end{cor}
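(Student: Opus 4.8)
The plan is to specialize Theorem \ref{theorem: main theorem of topological bundles} to the case $n = 1$. A topological $\mathbb{T}$-line bundle is exactly a topological $\mathbb{T}$-vector bundle of rank $1$, so the theorem produces a canonical split surjection from the set of isomorphism classes of such bundles onto the set of isomorphism classes of $1$-fold covering spaces of $X$, together with the assertion that a line bundle is trivial if and only if its associated covering space is. The key (and essentially the only) point to record is that every $1$-fold covering space of $X$ is trivial: a $1$-fold cover is a fiber bundle whose fiber is a one-point discrete space, and in the proof of Theorem \ref{theorem: bundle cohomology bijections} such objects are classified by $H^1(X, \underline{S_1})$ with $S_1$ the trivial group, so there is a unique isomorphism class, namely that of the identity map $X \to X$. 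Hence the covering space associated to an arbitrary topological $\mathbb{T}$-line bundle on $X$ is trivial, and the final clause of Theorem \ref{theorem: main theorem of topological bundles} then forces the line bundle itself to be trivial.

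Alternatively, one may run the direct cohomological argument from the proof of Theorem \ref{theorem: main theorem of topological bundles}: by Proposition \ref{proposition: T vector bundles are locally free} and Theorem \ref{theorem: bundle cohomology bijections}, topological $\mathbb{T}$-line bundles on $X$ are classified by $H^1(X, \mathcal{O}_X^\times) = H^1(X, \emph{GL}_1(\mathcal{O}_X))$, and since $\mathbb{T}^\times = \mathbb{R}$ the sheaf $\mathcal{O}_X^\times$ is the sheaf of continuous real-valued functions under ordinary addition, which is acyclic because paracompactness provides continuous partitions of unity. Therefore $H^1(X, \mathcal{O}_X^\times) = 0$, and every topological $\mathbb{T}$-line bundle on $X$ is trivial.

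I do not expect a genuine obstacle here, as the statement is an immediate consequence of the theorem just proved; the only detail worth spelling out is the triviality of $1$-fold covers, equivalently the vanishing of cohomology with coefficients in the trivial group $S_1$ (or, in the cohomological route, the acyclicity of $\mathcal{O}_X^\times$ under paracompactness).
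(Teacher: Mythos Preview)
Your proposal is correct and your first approach is exactly the paper's proof: the paper simply says ``This follows from the fact that any $1$-fold covering space is trivial,'' which is precisely your specialization of Theorem~\ref{theorem: main theorem of topological bundles} to $n=1$. Your alternative cohomological route via $H^1(X,\mathcal{O}_X^\times)=0$ is also valid and is essentially the $n=1$ instance of the argument already used inside the proof of Theorem~\ref{theorem: main theorem of topological bundles}.
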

	\begin{proof}This follows from the fact that any $1$-fold covering space is trivial.
	\end{proof}
	
	We note that L.~Allermann \cite[Corollary 1.25]{allermann2012chern} also proved a similar result as in Corollary \ref{corollary: 1} in the context of tropical cycles. 
	
	\begin{cor}\label{corollary: 2}
		Let $X$ be a paracompact Hausdorff space which is locally path-connected and semilocally simply connected.  If $X$ is simply connected, then all topological $\mathbb{T}$-vector bundles on $X$ are trivial.
	\end{cor}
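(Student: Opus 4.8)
The plan is to combine Theorem \ref{theorem: main theorem of topological bundles} with the classification of covering spaces. First I would observe that a simply connected space is in particular connected, and since $X$ is locally path-connected it is locally connected; hence the hypotheses of Theorem \ref{theorem: main theorem of topological bundles} are satisfied. Connectedness of $X$ also guarantees that every topological $\mathbb{T}$-vector bundle on $X$ has a well-defined constant rank, so it suffices to prove that, for each fixed $n$, every rank $n$ topological $\mathbb{T}$-vector bundle on $X$ is trivial.

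Next I would invoke the second assertion of Theorem \ref{theorem: main theorem of topological bundles}: a rank $n$ topological $\mathbb{T}$-vector bundle on $X$ is trivial if and only if its associated $n$-fold covering space is trivial. So the problem reduces to showing that $X$ admits no nontrivial $n$-fold covering space. This is exactly where the hypotheses "locally path-connected" and "semilocally simply connected" are used: under these assumptions the standard covering space theory applies, and isomorphism classes of (possibly disconnected) $n$-fold covering spaces of $X$ are in bijection with isomorphism classes of $\pi_1(X)$-sets of cardinality $n$, equivalently with conjugacy classes of homomorphisms $\pi_1(X)\to S_n$, the trivial covering corresponding to the trivial homomorphism. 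Since $X$ is simply connected, $\pi_1(X)$ is trivial, so the only such homomorphism is the trivial one; hence the only $n$-fold covering space of $X$ is the trivial one $X\times\{1,\dots,n\}$.

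Putting the two steps together: for every $n$, the covering space associated to any rank $n$ topological $\mathbb{T}$-vector bundle on $X$ is trivial, so by Theorem \ref{theorem: main theorem of topological bundles} the bundle itself is trivial; since $n$ is arbitrary, all topological $\mathbb{T}$-vector bundles on $X$ are trivial. The only point requiring any care is the appeal to covering space theory — one should note that the assumptions imposed on $X$ are precisely those needed to classify $n$-fold covers by $\pi_1(X)$-sets — but this is entirely standard, so I do not expect a genuine obstacle; the argument is essentially a one-line deduction from Theorem \ref{theorem: main theorem of topological bundles} once the covering spaces have been identified.
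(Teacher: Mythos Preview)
Your proposal is correct and follows essentially the same approach as the paper: reduce via Theorem \ref{theorem: main theorem of topological bundles} to the triviality of $n$-fold covers, then invoke the classification of covering spaces by $\pi_1(X)$-sets to conclude. The paper's proof is the same argument compressed into two sentences; your version simply spells out the hypothesis-checking (locally path-connected $\Rightarrow$ locally connected, simply connected $\Rightarrow$ connected) more explicitly.
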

	\begin{proof}Under the stated conditions, $n$-fold covering spaces correspond to sets of size $n$ together with an action of $\pi_1(X)$.  In the simply connected case, this action (and hence the covering space) must be trivial.
	\end{proof}

	\begin{rmk}
		Let $X$ be an algebraic variety and $\textrm{Trop}(X)$ be its scheme-theoretic tropicalization. From Corollaries \ref{corollary: 1} and \ref{corollary: 2}, one may determine when all topological $\mathbb{T}$-vector bundles on $\textrm{Trop}(X)(\mathbb{T})$ are trivial. For instance, when $X$ is a projective variety, all topological $\mathbb{T}$-line bundles on $
		\textrm{Trop}(X)(\mathbb{T})$ are trivial. 
	\end{rmk}
	
	\section{Labelled $K$-algebras}\label{section: labelled k-algebras}

	Throughout this section, we let $K$ be a field equipped with a surjective valuation $\nu: K \rightarrow \mathbb{T}$.  $\mathcal{O}_K$ will denote the associated valuation ring. The following is the key definition in this section.
	
	\begin{mydef}$ $ \label{definition: labelled algebra}
		\begin{enumerate}
			\item 
			A \emph{labelled} $K$-algebra $(A,\phi)$ consists of a $K$-algebra $A$ together with an epimorphism $\phi: K[M] \to A$ for some monoid $M$ such that the map $M\to A$ is injective. For notational convenience, we will denote a labelled $K$-algebra $(A,\phi)$ simply by $A$. 
			\item 
			A \emph{monomial} of $A$ is an element of the form $\phi(km)$ for some $k\in K$ and $m\in M$.
			\item 
			A monomial is said to be a \emph{primitive monomial} if it has the form $\phi(m)$.	
			\item 
			We denote the set of monomials by $\mathbf{Mon}(A)$ and the set of primitive monomials by $\mathbf{PMon}(A)$.
			\item 
			A \emph{finitely generated monomial} $\mathcal{O}_K$-submodule of $A$ is an $\mathcal{O}_K$-submodule which is generated by finitely many monomials. 
		\end{enumerate}	
	\end{mydef}
	
	\begin{rmk}Labelled $\mathbb{N}$-algebra are called blueprints (c.f. \cite{oliver1}).  \cite{lorscheid2015scheme} links the study of tropical geometry to the monomial blueprint, which is essentially the labelled algebra we are working with.
	\end{rmk}
	
	Equipping the algebra $A$ with an epimorphism $K[M]\rightarrow A$ is necessary in order to tropicalize $\Spec A$. The additional condition that the map $M\rightarrow A$ is injective is probably unnecessary for most of what we do below, but allows us to skip some tedious steps in the proofs.  The below example should convince the reader that not too much is lost by adding this condition.
	
	\begin{myeg}
		Let $A=K[x,y] / (x^2 - y^3)$.  Let $M$ be the free monoid on 2 generators named $x$ and $y$.  We may equip $A$ with the canonical map $K[M]=K[x,y]\rightarrow A$.  Note that the map $M\rightarrow A$ is not injective.  Nonetheless, we may use this map to tropicalize $\Spec A$, and the coordinate semiring of the resulting tropical scheme is $\mathbb{T}[x, y]/\angles{x^2=y^3}$.  
		
		Now let $M'\subseteq A$ be generated by elements $x$ and $y$ with a relation $x^2=y^3$.  Then we have an epimorphism (in fact an isomorphism) $K[M']\rightarrow A$.  The corresponding map $M'\rightarrow A$ is injective, so we have a labelled $K$-algebra.  Tropicalizing $\Spec A$ with this structure gives $\mathbb{T}[M']\cong \mathbb{T}[x,y]/\angles{x^2=y^3}$.  So we were able to adjust the epimorphism $K[M]\rightarrow A$ to one giving the structure of a labelled $K$-algebra without changing the tropical scheme of interest.
	\end{myeg}
	
	For a given ring $A$, the set of ideals of $A$ forms an idempotent semiring. One may then expect that some properties of ideals of $A$ (more generally modules over $A$) to be captured purely in terms of certain structures of this associated idempotent semiring. We refer the interested readers to \cite{jun2020lattices} for detailed treatment of this line of thought along with an idempotent semiring analogue of Hochster's theorem on spectral spaces \cite{hochster1969prime}. To this end, we first prove the following.
	
	\begin{lem}
		Let $A$ be a labelled $K$-algebra.  The set of finitely generated monomial $\mathcal{O}_K$-submodules of $A$ is an idempotent semiring, where for two submodules $N, N'\subseteq A$, $N+N'$ is the smallest submodule containing both $N$ and $N'$, and $NN'$ is the smallest submodule containing $\{xy \mid x\in N, y\in N'\}$.
	\end{lem}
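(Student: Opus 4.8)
The plan is to verify directly that the set $\mathbb{S}_{\mathrm{fgmon}}(A)$ of finitely generated monomial $\mathcal{O}_K$-submodules of $A$, with the stated operations, is a well-defined semiring and that it is additively idempotent. The first point to check is closure: the sum $N + N' = \angles{N \cup N'}$ of two finitely generated monomial submodules is again finitely generated (take the union of generating sets) and monomial (the union of two sets of monomials is a set of monomials), and similarly the product $NN'$, which is generated by $\{xy \mid x \in N, y \in N'\}$, is generated by the finitely many pairwise products of the given monomial generators, and each such product $\phi(k_1 m_1)\phi(k_2 m_2) = \phi(k_1 k_2 \, m_1 m_2)$ is again a monomial since $M$ is a monoid. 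So both operations land back in $\mathbb{S}_{\mathrm{fgmon}}(A)$.

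Next I would check the semiring axioms. Commutativity and associativity of $+$ are immediate from the fact that $\angles{-}$ depends only on the underlying set being generated, and the zero submodule $\{0\}$ is the additive identity. For multiplication, commutativity follows from commutativity of $A$, associativity of $\cdot$ follows because $(NN')N''$ and $N(N'N'')$ are both the smallest submodule containing all triple products $xyz$, and the multiplicative identity is $\mathcal{O}_K \cdot 1 = \mathcal{O}_K$ (note $1 = \phi(1_M)$ is a primitive monomial, so $\mathcal{O}_K$ is indeed a finitely generated monomial submodule). Distributivity, $N(N' + N'') = NN' + NN''$, is verified by noting that both sides are the smallest submodule containing $\{x(y+y') \mid x \in N, y \in N', y' \in N''\}$, equivalently the submodule generated by all products of a generator of $N$ with a generator of $N'$ or $N''$. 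Finally, $\{0\}$ is absorbing for multiplication. Additive idempotency is the observation that $N + N = \angles{N \cup N} = \angles{N} = N$.

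There is no serious obstacle here; the lemma is essentially bookkeeping. The one point deserving a moment's care is ensuring that ``monomial'' is preserved under all the operations — this rests entirely on the monoid structure of $M$ (so that products of monomials are monomials) and the injectivity of $M \to A$ is not even needed for this statement. I would present the verification of closure and of distributivity explicitly, since those are the only steps where one might momentarily worry, and state the remaining axioms as routine, noting that they hold already at the level of the full semiring $\mathbb{S}_{\mathrm{fg}}(A)$ of all finitely generated $\mathcal{O}_K$-submodules and simply restrict to the monomial sub-semiring.
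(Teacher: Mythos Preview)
Your proposal is correct and follows essentially the same approach as the paper: both arguments observe that the semiring axioms hold already for all $\mathcal{O}_K$-submodules (by analogy with ideals), and then verify that the finitely generated monomial submodules are closed under the operations by noting that $N+N'$ and $NN'$ are generated by $S\cup S'$ and $\{xy \mid x\in S, y\in S'\}$ respectively, which are finite sets of monomials. Your write-up is somewhat more explicit about the individual axioms, but the content is the same.
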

	\begin{proof}
		Showing that $\mathcal{O}_K$-submodules of $A$ form an idempotent semiring is  similar to showing that ideals form an idempotent semiring. Clearly $0$ and $\mathcal{O}_K$ are finitely generated monomial submodules. If $N,N'\subseteq A$ are finitely generated monomial submodules with generating sets $S,S'$, then $N+N'$ and $NN'$ are generated by $S\cup S'$ and $\{xy\mid x\in S, y\in S'\}$, both of which are finite sets of monomials.
	\end{proof}
	
	Recall that any $\mathbb{B}$-algebra $S$ comes with a canonical partial order; $a\leq b$ if and only if $a+b=b$ for $a,b \in S$. The following is another key definition in this section. 
	
	\begin{mydef}\label{definition: monomial valuation}Let $A$ be a labelled $K$-algebra.  Let $S$ be a $\mathbb{T}$-algebra.  A \emph{monomial valuation} on $A$ with values in $S$ is a monoid homomorphism $w: \mathbf{Mon}(A)\to S$ such that 
		\begin{enumerate}
			\item $w(k) = v(k)$ for all $k\in K$.
			\item If $x_1,\dots,x_n,y\in \mathbf{Mon}(A)$ and $y$ is an $\mathcal{O}_K$-linear combination of $x_1,\dots, x_n$, then 
			\begin{equation}\label{eq: monomial valuation}
				w(y) \leq w(x_1) + \cdots + w(x_n).
			\end{equation}
		\end{enumerate}
	\end{mydef}
	
	Valuations are essentially semiring homomorphisms whose source is the semiring of finitely generated submodules of a ring (c.f. \cite{giansiracusa2016equations}, \cite{macpherson2013skeleta}).  Analogously, we would like to link monomial valuations with homomorphisms from the semiring of finitely generated monomial $\mathcal{O}_K$-submodules.

	\begin{lem}\label{lemma: finitely generated fractional ideals is T}
		Let $A$ be a labelled $K$-algebra. The semiring of finitely generated fractional ideals of $K$ is isomorphic to $\mathbb{T}$. 
	\end{lem}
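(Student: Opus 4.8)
The plan is to exhibit an explicit isomorphism between the semiring $\mathbb{S}_{\mathrm{fg}}(K)$ of finitely generated fractional ideals of $K$ and the tropical semifield $\mathbb{T}$, using the surjective valuation $\nu: K \to \mathbb{T}$ as the bridge. First I would observe that a finitely generated $\mathcal{O}_K$-submodule $N$ of $K$ is generated by finitely many elements $a_1,\dots,a_n \in K$; since $\mathcal{O}_K$ is a valuation ring, its ideals are totally ordered by inclusion, and hence so are the principal fractional ideals $a_i\mathcal{O}_K$. Therefore $N = a_i\mathcal{O}_K$ for whichever $a_i$ maximizes $\nu(a_i)$ (note $a\mathcal{O}_K \subseteq b\mathcal{O}_K$ iff $\nu(a) \le \nu(b)$), so every finitely generated fractional ideal is in fact principal. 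This reduces the problem to understanding principal fractional ideals.

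Next I would define the map $\Phi: \mathbb{S}_{\mathrm{fg}}(K) \to \mathbb{T}$ by sending a principal fractional ideal $a\mathcal{O}_K$ to $\nu(a)$, and sending the zero submodule $\{0\}$ to $-\infty$. This is well-defined because $a\mathcal{O}_K = b\mathcal{O}_K$ iff $\nu(a) = \nu(b)$ (both containments hold iff $\nu(a)\le\nu(b)$ and $\nu(b)\le\nu(a)$), using that $\mathcal{O}_K^\times = \nu^{-1}(0)$. Surjectivity follows from surjectivity of $\nu$. For injectivity, the same equivalence $a\mathcal{O}_K = b\mathcal{O}_K \iff \nu(a)=\nu(b)$ does the job. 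Then I would check $\Phi$ is a semiring homomorphism: for the product, $(a\mathcal{O}_K)(b\mathcal{O}_K) = ab\,\mathcal{O}_K$ so $\Phi$ carries multiplication of ideals to $\nu(a)+\nu(b) = \nu(a)\odot\nu(b)$, which is multiplication in $\mathbb{T}$; for the sum, $a\mathcal{O}_K + b\mathcal{O}_K$ is the smallest submodule containing both, which (by total ordering) is the larger of the two, namely $a\mathcal{O}_K$ if $\nu(a)\ge\nu(b)$, so $\Phi$ carries the sum to $\max\{\nu(a),\nu(b)\} = \nu(a)\oplus\nu(b)$. The zero ideal and unit ideal $\mathcal{O}_K = 1\cdot\mathcal{O}_K$ map to the additive and multiplicative identities $-\infty$ and $0$ of $\mathbb{T}$ respectively.

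The only genuine subtlety — and the step I expect to require the most care — is verifying that every finitely generated $\mathcal{O}_K$-submodule of $K$ is principal, i.e. that the total ordering of fractional ideals of $\mathcal{O}_K$ really does reduce sums of finitely many principal ideals to a single one. This rests on $\mathcal{O}_K$ being a valuation ring with value group $\mathbb{R}$ (stated at the start of the section), so that the value group is totally ordered and each element of $K^\times$ has a well-defined value; once that is in hand, the comparison $a\mathcal{O}_K \subseteq b\mathcal{O}_K \iff a/b \in \mathcal{O}_K \iff \nu(a)\le\nu(b)$ is immediate and the rest is bookkeeping. Everything else is a routine check that $\Phi$ respects the two operations and the two distinguished elements, which I would state but not belabor.
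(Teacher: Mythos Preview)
Your proposal is correct and follows essentially the same approach as the paper: both arguments first reduce to principal fractional ideals by using the total ordering of ideals in a valuation ring (picking the generator of maximal valuation), then define the map $\langle x\rangle \mapsto \nu(x)$ and verify it is a well-defined bijection and a semiring isomorphism. Your version is slightly more explicit in checking the additive and multiplicative structure, whereas the paper leaves that as ``one can easily check,'' but the content is the same.
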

	\begin{proof}
		Let $\langle x \rangle$ be the fractional ideal of $K$ generated by $x \in K$. Then, we have
		\begin{equation}\label{eq: fractioanl principal}
			\langle x \rangle =\{y \in K \mid \nu(y) \leq \nu(x)\}
		\end{equation}
		since this condition is equivalent to $\frac{y}{x} \in \mathcal{O}_K$. This means the map from the set of principal fractional ideals to $\mathbb{T}$ given by
		\begin{equation}\label{eq: maps to T}
			\langle x \rangle \mapsto \nu(x)
		\end{equation}
		is well-defined, that is, it does not depend on the choice of generator. Furthermore, \eqref{eq: fractioanl principal} shows that $\nu(x) \leq \nu(y)$ implies that $\langle x \rangle$ is a subset of $\langle y \rangle$.
		
		Now, consider the fractional ideal generated by $x_1,\dots,x_n$. Without loss of generality, we may assume that $x_1$ has the largest valuation, and this implies that all the $x_i$ (and hence the entire fractional ideal) are contained in the fractional ideal generated by $x_1$. In particular, all finitely generated fractional ideals are principal. So, we have a monotonic bijection from finitely generated fractional ideals of $K$ to $\mathbb{T}$ given in \eqref{eq: maps to T}, and one can easily check that it is an isomorphism of semirings. 
	\end{proof}
	
	Let $A$ be a labelled $K$-algebra, and let $\mathbb{S}_{\mathrm{fgmon}}(A)$ be the semiring of finitely generated monomial $\mathcal{O}_K$-submodules of $A$. The semiring of finitely generated fractional ideals of $K$ is a subsemiring of $\mathbb{S}_{\mathrm{fgmon}}(A)$, and isomorphic to $\mathbb{T}$ by Lemma \ref{lemma: finitely generated fractional ideals is T}. By abuse of notation, we identify them, in particular, this means that $\mathbb{S}_{\mathrm{fgmon}}(A)$ is equipped with a natural $\mathbb{T}$-algebra structure. 
	
	\begin{pro}\label{proposition: monomial valuation submodules}
		With the same notation as above, the map $w_{univ}:\mathbf{Mon}(A)\rightarrow\mathbb{S}_{\mathrm{fgmon}}(A)$ which sends a monomial to the submodule it generates is a monomial valuation.  Furthermore, it is universal in the sense that every monomial valuation $w:\mathbf{Mon}(A)\rightarrow S$ factors as $w = f w_{univ}$ for a unique $\mathbb{T}$-algebra homomorphism $f:\mathbb{S}_{\mathrm{fgmon}}(A) \to S$.
	\end{pro}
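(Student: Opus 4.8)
The plan is to verify the two assertions in sequence: that $w_{univ}$ is a monomial valuation, and that it is universal among monomial valuations out of $\mathbf{Mon}(A)$. The first part is a direct check against Definition \ref{definition: monomial valuation}. That $w_{univ}$ is a homomorphism of monoids follows from $\langle x\rangle\langle y\rangle = \mathcal{O}_K x\,\mathcal{O}_K y = \mathcal{O}_K xy = \langle xy\rangle$ for monomials $x,y$, together with $\langle 1\rangle = \mathcal{O}_K$. Condition (1) holds since $w_{univ}(k) = \langle k\rangle$, which corresponds under Lemma \ref{lemma: finitely generated fractional ideals is T} to the valuation of $k$. For condition (2), if a monomial $y$ is an $\mathcal{O}_K$-linear combination of monomials $x_1,\dots,x_n$, then $y$ lies in $\langle x_1,\dots,x_n\rangle = \langle x_1\rangle + \cdots + \langle x_n\rangle$, hence $\langle y\rangle \subseteq \langle x_1\rangle + \cdots + \langle x_n\rangle$; since in the $\mathbb{B}$-algebra $\mathbb{S}_{\mathrm{fgmon}}(A)$ the relation $N\le N'$ means precisely $N+N'=N'$, i.e. $N\subseteq N'$, this is exactly inequality \eqref{eq: monomial valuation}.

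For the universal property I would start from the observation that every element of $\mathbb{S}_{\mathrm{fgmon}}(A)$ has the form $\langle x_1,\dots,x_n\rangle = w_{univ}(x_1) + \cdots + w_{univ}(x_n)$ for finitely many monomials $x_i$. Hence if $f$ is any semiring homomorphism with $w = f\circ w_{univ}$, additivity forces $f(\langle x_1,\dots,x_n\rangle) = w(x_1) + \cdots + w(x_n)$; this simultaneously gives uniqueness and dictates the definition of $f$. The hard part will be well-definedness: showing that $w(x_1) + \cdots + w(x_n)$ depends only on the submodule generated by the $x_i$, not on the chosen generators. Given $\langle x_1,\dots,x_n\rangle = \langle y_1,\dots,y_m\rangle$, each $y_j$ lies in the submodule generated by the $x_i$, so it is an $\mathcal{O}_K$-linear combination of them, and \eqref{eq: monomial valuation} yields $w(y_j)\le w(x_1) + \cdots + w(x_n)$ for all $j$. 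Since $S$ is a $\mathbb{T}$-algebra it is additively idempotent (the relation $1+1=1$ holds in $\mathbb{T}$, hence in $S$), so $\le$ is a genuine antisymmetric partial order, and a finite family of elements each bounded above by some $b$ has its sum bounded above by $b$ (adding $b$ absorbs every summand one at a time). Therefore $w(y_1) + \cdots + w(y_m)\le w(x_1) + \cdots + w(x_n)$, and the symmetric argument gives the reverse inequality, so the two sums agree and $f$ is well-defined.

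It then remains to check that $f$ so defined is a $\mathbb{T}$-algebra homomorphism satisfying $w = f\circ w_{univ}$, which I expect to be routine. The factorization $w = f\circ w_{univ}$ is just the definition of $f$ on a single generator. Additivity holds because $N+N'$ is generated by the union of generating sets of $N$ and $N'$; the unit $\mathcal{O}_K=\langle 1\rangle$ maps to $w(1)=1_S$; multiplicativity follows from the fact that $NN'$ is generated by the products $x_iy_j$ of generators, together with $w(x_iy_j)=w(x_i)w(y_j)$, after expanding by distributivity in $S$; and compatibility with the $\mathbb{T}$-algebra structures holds because on the subsemiring of finitely generated fractional ideals $\langle k\rangle$ ($k\in K$) of $\mathbb{S}_{\mathrm{fgmon}}(A)$, identified with $\mathbb{T}$ via Lemma \ref{lemma: finitely generated fractional ideals is T}, the map $f$ acts by $\langle k\rangle\mapsto w(k)$, which is exactly the structure morphism $\mathbb{T}\to S$. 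The single genuinely non-formal step is the well-definedness above, where the defining inequality of a monomial valuation and the antisymmetry of $\le$ in an idempotent semiring are both essential.
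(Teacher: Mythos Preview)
Your proof is correct and follows essentially the same route as the paper's. The only cosmetic difference is that the paper first writes $f(N)=\sup_{y\in N\cap\mathbf{Mon}(A)} w(y)$ and then proves this supremum equals $\sum_i w(x_i)$ for any monomial generating set $\{x_i\}$, whereas you define $f$ directly by that sum and prove independence of the generating set; the underlying inequality argument is identical.
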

	\begin{proof}	
		For $x_1, \ldots, x_n\in \mathbf{Mon}(A)$, we will write $\angles{x_1,\ldots, x_n}$ for the $\mathcal{O}_K$-submodule of $A$ generated by $\{x_1,\dots,x_n\}$. Now, for $k\in K$, one has that
		\[
		w_{univ}(k) = \angles{k} = v(k).
		\]
		Furthermore, $w_{univ}$ is clearly a monoid homomorphism. For the second axiom of Definition \ref{definition: monomial valuation}, let $x_1,\ldots,x_n,y\in \mathbf{Mon}(A)$ and $y$ be an $\mathcal{O}_K$-linear combination of $x_1,\dots,x_n$. In particular, 
		\[
		y \in \angles{x_1, \ldots, x_n} = \angles{x_1} + \ldots + \angles{x_n},
		\]
		and hence 
		\[
		w_{univ}(y) = \angles{y} \leq \angles{x_1} + \ldots + \angles{x_n} = w_{univ}(x_1) + \ldots + w_{univ}(x_n).
		\]
		This implies $w_{univ}$ is a monomial valuation.
		
		Now, to prove the universal property, let $w: \mathbf{Mon}(A)\to S$ be a monomial valuation.  Let $N$ be a finitely generated monomial $\mathcal{O}_K$-submodule of $A$, and $x_1,\ldots, x_n\in \mathbf{Mon}(A)$ be a set of generators of $N$. We first claim the following:
		\begin{equation}\label{eq: sup exists}
			\sup_{y\in N\cap \mathbf{Mon}(A)} w(y) = \sum_i w(x_i), 
		\end{equation}
		where the supremum is taken by using the canonical partial order of $\mathbb{T}$-algebras. In fact, it follows from Definition \ref{definition: monomial valuation} that any $y\in N\cap \mathbf{Mon}(A)$ satisfies $w(y) \leq \sum_i w(x_i)$. Furthermore, since $x_i \in N \cap \mathbf{Mon}(A)$, if $M$ is any other upper bound on the elements $w(y)$ for $y \in N \cap \mathbf{Mon}(A)$, then we have $w(x_i) \leq M$, and hence $\sum_i w(x_i) \leq M$ since $S$ is idempotent. This proves Equation~\eqref{eq: sup exists}, in particular, we can define the following function:
		\[
		f:\mathbb{S}_{\mathrm{fgmon}}(A)\rightarrow S, \quad f(N) = \displaystyle\sup_{y\in N\cap \mathbf{Mon}(A)} w(y).
		\]
		Note that if $N = \angles{x}$, the above equation implies $f(N) = w(x)$.  In particular, for $x\in\mathbf{Mon}(A)$, we have
		\[
		f(w_{univ}(x)) = f(\angles{x}) = w(x).
		\]
		Let $N'$ be a finitely generated monomial $\mathcal{O}_K$-submodule of $A$, generated by $y_1,\ldots, y_m\in \mathbf{Mon}(A)$. Then, $N+N'$ is generated by $x_1\ldots,x_n,y_1,\ldots,y_m$, and hence we have
		\[
		f(N+N') = \sum_iw(x_i)+\sum_jw(y_j) = f(N) + f(N').
		\]
		Similarly $NN'$ is generated by monomials of the form $x_i y_j$, and hence we have
		\[ 
		f(NN') = \sum_{i,j} w(x_i y_j) = \sum_{i,j}w(x_i)w(y_j)=(\sum_iw(x_i))(\sum_jw(y_j)) = f(N)f(N').
		\]
		By surjectivity of the valuation on $K$, any element of $\mathbb{T}$ has the form $v(k)$ for some $k\in K$, and recall we have identified $v(k)$ with $\angles{k}$.  So $f(v(k)) = f(\angles{k}) = w(k) = v(k)$.  Thus $f$ is a $\mathbb{T}$-algebra homomorphism.
		
		Finally, let $g$ be some other homomorphism such that $w = g w_{univ}$.  Then for any $x$, $f(\angles{x}) = f(w_{univ}(x)) = g(\angles{x})$.  Since every element of $\mathbb{S}_{\mathrm{fgmon}}(A)$ is a sum of elements of the form $\angles{x}$, this implies $f=g$.
	\end{proof}
	
	Next we link Definition \ref{definition: monomial valuation} to bend relations in \cite{giansiracusa2016equations}.
	
	\begin{lem}\label{lemma: monomial valuation bend relations}
		Axiom (2) of Definition \ref{definition: monomial valuation} is equivalent to the following condition (*) given all the other axioms in that definition:
		\medskip
		
		(*) Let $x_1, \ldots, x_n\in \mathbf{Mon}(A)$.  If $x_1 + \cdots + x_n = 0$, then for each $i=1,\ldots, n$, we have
		\begin{equation} 
			w(x_i) \leq \sum_{j\neq i} w(x_j). 
		\end{equation}
	\end{lem}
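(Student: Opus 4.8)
The plan is to prove the equivalence $(2) \Leftrightarrow (*)$ by proving both implications, using the monoid-homomorphism property of $w$ together with the fact that $\mathcal{O}_K$ contains $-1$.

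First I would show $(2) \Rightarrow (*)$. Suppose $x_1 + \cdots + x_n = 0$ in $A$, and fix an index $i$. Then $x_i = -\sum_{j\neq i} x_j = \sum_{j \neq i} (-1) x_j$, which exhibits $x_i$ as an $\mathcal{O}_K$-linear combination of $\{x_j : j \neq i\}$ (here I use that $-1 \in \mathcal{O}_K$, since $\mathcal{O}_K$ is the valuation ring of a field). Applying axiom $(2)$ to this expression gives $w(x_i) \leq \sum_{j \neq i} w(x_j)$, which is exactly $(*)$.

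For the converse $(*) \Rightarrow (2)$, suppose $y$ is an $\mathcal{O}_K$-linear combination $y = \sum_{j=1}^n a_j x_j$ with $a_j \in \mathcal{O}_K$. The idea is to apply $(*)$ to the relation $y + \sum_j (-a_j) x_j = 0$, which has $n+1$ terms: $y$ together with the monomials $(-a_j) x_j$ (note $-a_j \in \mathcal{O}_K$, so each $(-a_j)x_j$ is indeed a monomial of $A$). Applying $(*)$ with $y$ playing the role of the distinguished term yields
\[
w(y) \leq \sum_{j=1}^n w((-a_j) x_j).
\]
Now since $w$ is a monoid homomorphism on $\mathbf{Mon}(A)$ and $-a_j = (-1)\cdot a_j$ as a product of monomials, we have $w((-a_j)x_j) = w(-1) + w(a_j) + w(x_j) = v(-1) + v(a_j) + w(x_j)$ using axiom $(1)$. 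Since $v(-1) = 0$ (the valuation of a unit, as $-1$ is a unit of $K$) and $v(a_j) = w(a_j) \leq 0$ because $a_j \in \mathcal{O}_K$, it follows that $w((-a_j)x_j) = v(a_j) + w(x_j) \leq w(x_j)$ in the partial order of the idempotent semiring $S$ (where $\leq 0$ means $v(a_j) + 0 = 0$, i.e. $v(a_j)$ is absorbed under $\oplus$-addition by larger elements). Hence $w(y) \leq \sum_j w((-a_j)x_j) \leq \sum_j w(x_j)$, which is axiom $(2)$. One small subtlety to address: I should handle the case where some $a_j = 0$, in which case that term simply drops out, or where $y = 0$, which is trivial.

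The main obstacle I anticipate is bookkeeping around the monomials $(-a_j)x_j$: one must verify these are genuinely elements of $\mathbf{Mon}(A)$ (they are, since $-a_j \in K$ and $x_j = \phi(k_j m_j)$, so $(-a_j)x_j = \phi((-a_j k_j) m_j)$), and one must be careful that applying $(*)$ to the relation with $n+1$ terms is legitimate even when the terms are not distinct or some vanish. The inequality manipulation $v(a_j) + w(x_j) \leq w(x_j)$ for $a_j \in \mathcal{O}_K$ is where the idempotent structure of $S$ and the $\mathbb{T}$-algebra structure genuinely enter, so I would state that carefully rather than treating it as routine.
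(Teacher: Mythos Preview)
Your proposal is correct and follows essentially the same route as the paper: both directions hinge on inserting the minus sign to convert between an $\mathcal{O}_K$-linear combination and a sum equal to zero, then using $w(-1)=v(-1)=1_S$ together with $w(c)\leq 1_S$ for $c\in\mathcal{O}_K$. The only cosmetic difference is that the paper places the sign on $y$ (writing $(-y)+c_1x_1+\cdots+c_nx_n=0$ and concluding via $w(-y)=w(y)$), whereas you place it on the coefficients; when you write this up, be careful to use multiplicative notation for the monoid operation in $S$ rather than the additive $``+"$ you slipped into, and note that $v(-1)=1_S$ because $(-1)^2=1$, not merely because $-1$ is a unit of $K$.
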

	\begin{proof}Let $w$ be a monomial valuation.  Then writing $x_i = \sum_{j\neq i} -  x_j$, the stated condition follows from Definition \ref{definition: monomial valuation}(2) since $\nu(1)=\nu(-1)$.
		
		Conversely suppose $w$ satisfies Definition \ref{definition: monomial valuation} except axiom (2), but satisfies the condition (*) from the statement of the lemma.  Let $x_1, \ldots, x_n, y\in \mathbf{Mon}(A)$.  Let $c_1, \cdots, c_n \in \mathcal{O}_K$ and suppose $y = c_1 x_1 + \cdots + c_n x_n$, or $(-y)+c_1x_1+\cdots+c_nx_n=0$. From the condition (*), we have
		\[ 
		w(-y) \leq \sum_i w(c_i x_i) = \sum_i w(c_i) w(x_i) \leq \sum_i w(x_i) 
		\]
		Since $w(-1) = 1$, we obtain axiom (2) of the definition.
	\end{proof}
	
	\begin{lem}\label{lemma: primitive monomial valuation}Let $A$ be a labelled $K$-algebra, and $S$ be a $\mathbb{T}$-algebra. Then there is a natural one-to-one correspondence between monomial valuations on A with values in S and monoid homomorphisms $f:\mathbf{PMon}(A)\rightarrow S$ satisfying the following condition (**):
		\medskip
		
		(**) Let $x_1, \ldots, x_n\in \mathbf{PMon}(A)$ and $k_1,\ldots, k_n\in K$.  If $k_1 x_1 + \cdots + k_n x_n = 0$ in $A$, then for each $i=1,\ldots, n$, we have
		\[ v(k_i) f(x_i) \leq \sum_{j\neq i} v(k_j) f(x_j) \]
		
	\end{lem}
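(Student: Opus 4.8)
The plan is to exhibit the correspondence concretely via restriction and extension, with Lemma~\ref{lemma: monomial valuation bend relations} serving as the bridge between axiom~(2) of Definition~\ref{definition: monomial valuation} and the bend-type condition~(*), which in turn connects to~(**).

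First I would send a monomial valuation $w: \mathbf{Mon}(A)\to S$ to its restriction $f := w|_{\mathbf{PMon}(A)}$, which is visibly a monoid homomorphism. To check that $f$ satisfies~(**), suppose $k_1x_1+\cdots+k_nx_n=0$ with $x_i\in\mathbf{PMon}(A)$ and $k_i\in K$. Each $k_ix_i$ lies in $\mathbf{Mon}(A)$ and these monomials sum to $0$, so Lemma~\ref{lemma: monomial valuation bend relations} gives $w(k_ix_i)\le\sum_{j\ne i}w(k_jx_j)$; since $w(k_ix_i)=w(k_i)w(x_i)=v(k_i)f(x_i)$ by multiplicativity and axiom~(1), this is exactly the inequality required in~(**).

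Conversely, given $f$ satisfying~(**), I would define $w$ on $\mathbf{Mon}(A)$ by $w(\phi(km)):=v(k)\,f(\phi(m))$, using that every monomial factors as $\phi(km)=k\cdot\phi(m)$ with $\phi(m)\in\mathbf{PMon}(A)$. The step I expect to be the main obstacle is well-definedness, since this factorization need not be unique once $\phi$ has a nontrivial kernel. If $k\phi(m)=k'\phi(m')$ in $A$ with $k,k'\ne 0$, then $k\phi(m)+(-k')\phi(m')=0$ (using that $K$ is a field), and applying~(**) with $i=1$ and then $i=2$ gives $v(k)f(\phi(m))\le v(k')f(\phi(m'))$ and the reverse inequality, using $v(-1)=1$; since $S$ is idempotent its canonical order is antisymmetric, so these values agree. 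The remaining cases, where one side is the zero monomial, are handled by the $n=1$ instance of~(**), which forces the corresponding $f$-value to equal $0_S$, the least element of $S$. Granting well-definedness, multiplicativity of $w$ follows from that of $v$ and $f$, axiom~(1) is immediate from $f(\phi(1_M))=1_S$, and axiom~(2) follows from Lemma~\ref{lemma: monomial valuation bend relations} once~(*) is verified: if monomials $y_i=k_ix_i$ sum to $0$, then~(**) applied to $\sum_i k_ix_i=0$ yields $w(y_i)\le\sum_{j\ne i}w(y_j)$.

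Finally I would check that the two assignments are mutually inverse: starting from $w$ one recovers $w(\phi(km))=v(k)w(\phi(m))$, and restricting the extension of $f$ to primitive monomials (take $k=1$) returns $f$. Naturality in $S$ is automatic, since both directions are built by composing with $\mathbb{T}$-algebra homomorphisms. Beyond the well-definedness verification, the argument is routine manipulation of the axioms.
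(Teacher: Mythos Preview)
Your proof is correct and follows essentially the same route as the paper: restrict $w$ to $\mathbf{PMon}(A)$ and use Lemma~\ref{lemma: monomial valuation bend relations} to get~(**), then extend $f$ by $w(kx)=v(k)f(x)$ and verify well-definedness via the two-term relation $kx+(-k')x'=0$ together with antisymmetry of the order. Your treatment is in fact slightly more careful than the paper's, since you explicitly address the degenerate case where one side is the zero monomial via the $n=1$ instance of~(**).
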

	\begin{proof}
		Let $w$ be a monomial valuation.  The restriction of $w$ to $\mathbf{PMon}(A)$ is a monoid homomorphism.  For any $x\in \mathbf{PMon}(A)$ and $k\in K$, we have $w(kx) = w(k)w(x) = v(k)w(x)$.  Then condition (*) of Lemma \ref{lemma: monomial valuation bend relations} implies condition (**).  
		
		We have constructed a function from the set of monomial valuations to the set of monoid homomorphisms $\mathbf{PMon}(A)\rightarrow S$ satisfying (**).  It is injective because the equation $w(kx)=v(k)w(x)$ lets us reconstruct $w$ from its restriction to $\mathbf{PMon}(A)$.  
		
		Now given a map $f:\mathbf{PMon}(A)\rightarrow S$ which satisfies (**), let $w(kx)=v(k)f(x)$ for all $k\in K$ and $x\in \mathbf{PMon}(A)$. Then, $w$ is well-defined since if $kx=cy$, then $kx-cy=0$. Hence from the given condition $(**)$, we have that $v(k)f(x)=v(c)f(y)$. Furthermore, since $f(1)=1$, we have $w(k)=v(k)$.  It is routine to check $w$ is a monoid homomorphism.  Condition (*) of lemma \ref{lemma: monomial valuation bend relations} follows from (**).
	\end{proof}
	
	Let $(A,\phi)$ be a labelled $K$-algebra. An epimorphism $\phi:K[M] \to A$ provides an $\mathbb{F}_1$-structure to $A$, allowing us to perform scheme-theoretic tropicalization as in \cite{giansiracusa2016equations}.  
	
	\begin{pro}\label{proposition: monomial valuation tropicalization}
		Let $(A,\phi)$ be a labelled $K$-algebra.  Let $S$ be a $\mathbb{T}$-algebra.  Let $\mathrm{Trop}(A)$ be the coordinate semiring of the scheme-theoretic tropicalization of $\Spec A$ with respect to $\phi:K[M] \to A$. Then, there is a natural one-to-one correspondence:
		\[
		\{\textrm{homomorphisms }     \mathrm{Trop}(A) \rightarrow S \}	\longleftrightarrow \{\textrm{monomial valuations on $A$ with values in $S$} \}.
		\] 
	\end{pro}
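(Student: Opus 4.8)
The plan is to unwind the definition of $\mathrm{Trop}(A)$ into an explicit presentation by generators and relations, translate the condition for a semiring homomorphism out of that presentation into the inequalities defining a monomial valuation, and then quote Lemma \ref{lemma: primitive monomial valuation}. Concretely, set $I=\ker\phi\subseteq K[M]$ and let $\mathrm{trop}\colon K[M]\to\mathbb{T}[M]$ be the map applying the valuation $\nu$ coefficientwise; by \cite{giansiracusa2016equations}, $\mathrm{Trop}(A)$ is $\mathbb{T}[M]$ modulo the bend congruence $B$ generated by the bend relations of $\mathrm{trop}(g)$ for all $g\in I$. By the universal property of the monoid semiring, a $\mathbb{T}$-algebra homomorphism $\psi\colon\mathbb{T}[M]\to S$ is the same datum as a monoid homomorphism $g\colon M\to(S,\cdot)$, via $\psi(\sum_i a_i m_i)=\sum_i a_i g(m_i)$ for $a_i\in\mathbb{T}$ and $m_i\in M$. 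Since the set of pairs identified by $\psi$ is itself a congruence on $\mathbb{T}[M]$, the map $\psi$ descends (necessarily uniquely) to a homomorphism $\mathrm{Trop}(A)\to S$ if and only if it respects every generating bend relation.

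Next I would carry out the translation. Write $f=\sum_i k_i m_i\in I$ with the $m_i\in M$ pairwise distinct and $k_i\in K^\times$; terms with zero coefficient are simply omitted, so the value $-\infty$ of $\nu$ never appears in what follows. Then $\mathrm{trop}(f)=\sum_i \nu(k_i) m_i$, its $i$-th bend relation identifies $\mathrm{trop}(f)$ with $\sum_{j\neq i}\nu(k_j) m_j$, and $\psi$ respects it exactly when $\sum_i\nu(k_i)g(m_i)=\sum_{j\neq i}\nu(k_j)g(m_j)$ in $S$. Because $S$ is additively idempotent, $a+b=b$ is equivalent to $a\le b$ in the canonical order, so this holds iff $\nu(k_i)g(m_i)\le\sum_{j\neq i}\nu(k_j)g(m_j)$. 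Hence $\psi$ factors through $\mathrm{Trop}(A)$ precisely when, for every $f=\sum_i k_i m_i\in\ker\phi$ with distinct $m_i$ and every index $i$, one has $\nu(k_i)g(m_i)\le\sum_{j\neq i}\nu(k_j)g(m_j)$.

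Finally I would match this with Lemma \ref{lemma: primitive monomial valuation}. Since $\phi|_M$ is injective, it identifies $M$ with $\mathbf{PMon}(A)$, so $g$ corresponds to a monoid homomorphism $h\colon\mathbf{PMon}(A)\to S$ with $h(\phi(m))=g(m)$; putting $x_i=\phi(m_i)$, the relation $f\in\ker\phi$ becomes $\sum_i k_i x_i=0$ in $A$ and the inequality above becomes exactly the inequality of condition (**) of Lemma \ref{lemma: primitive monomial valuation} for the tuple $(x_1,\dots,x_n)$. The only discrepancy is that (**) is phrased for arbitrary tuples of primitive monomials, whereas here the $x_i$ are pairwise distinct; but (**) for distinct tuples implies (**) in general by a short ultrametric argument (merge repeated terms, apply the distinct case when the combined coefficient is nonzero, and note that in the degenerate sub-cases the offending term $\nu(k_i)h(x_i)$ is already dominated by another term with the same monomial), while the converse implication is immediate. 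Therefore $\Hom(\mathrm{Trop}(A),S)$ is in bijection with the set of monoid homomorphisms $\mathbf{PMon}(A)\to S$ satisfying (**), which by Lemma \ref{lemma: primitive monomial valuation} is in bijection with the set of monomial valuations on $A$ with values in $S$, and every step is given by an explicit formula so the whole correspondence is natural in $S$. (Combined with Proposition \ref{proposition: monomial valuation submodules}, which exhibits $\mathbb{S}_{\mathrm{fgmon}}(A)$ as the corepresenting object of the same functor, this also yields $\mathrm{Trop}(A)\simeq\mathbb{S}_{\mathrm{fgmon}}(A)$ by Yoneda, which is presumably the route for the accompanying corollary.)

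The step I expect to demand the most care is the translation in the second paragraph: keeping track of which monomials coincide, handling the vanishing-coefficient convention cleanly, and verifying that respecting the generating bend relations is enough to force a factorisation through the full bend congruence. None of this is conceptually hard, but it is where the argument could go wrong if written carelessly.
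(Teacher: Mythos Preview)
Your proposal is correct and follows essentially the same route as the paper: identify $M$ with $\mathbf{PMon}(A)$ via $\phi|_M$, recognize that $\mathbb{T}$-algebra homomorphisms out of $\mathrm{Trop}(A)=\mathbb{T}[M]/B$ are exactly monoid homomorphisms $M\to S$ respecting the bend relations, rewrite those as the inequalities of condition~(**), and invoke Lemma~\ref{lemma: primitive monomial valuation}. The paper's proof is considerably terser and does not pause over the distinct-versus-repeated monomial issue you flag in your third paragraph; your ultrametric merging argument is a reasonable way to close that small gap, though the paper evidently regards it as routine.
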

	\begin{proof}
		Note that since the canonical map $M\rightarrow A$ is injective with image $\mathbf{PMon}(A)$, we have
		\[
		M\simeq \mathbf{PMon}(A) \textrm{ (as monoids).}
		\] 
		Since $\mathrm{Trop}(A)$ is the quotient of $\mathbb{T}[M]$ by bend relations, a homomorphism $\mathrm{Trop}(A) \rightarrow S$ corresponds to a monoid homomorphism $M \rightarrow S$ satisfying the relations in Lemma \ref{lemma: monomial valuation bend relations}, or equivalently to a monoid homomorphism $\mathbf{PMon}(A)\rightarrow S$ satisfying the relations in Lemma \ref{lemma: monomial valuation bend relations}. It follows from Lemma \ref{lemma: primitive monomial valuation} that these correspond to monomial valuations.
	\end{proof}
	\begin{cor}\label{corollary: tropical interpretation of monomial submodules}Let $(A,\phi)$ be a labelled $K$-algebra.  Let $\mathrm{Trop}(A)$ be the coordinate semiring of the scheme-theoretic tropicalization of $\Spec A$ with respect to $\phi:K[M] \to A$. Let $\mathbb{S}_{\mathrm{fgmon}}(A)$ be the semiring of finitely generated monomial $\mathcal{O}_K$-submodules of $A$. Then $\mathrm{Trop}(A) \simeq \mathbb{S}_{\mathrm{fgmon}}(A)$ as $\mathbb{T}$-algebras.
	\end{cor}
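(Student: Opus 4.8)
The plan is to notice that, once the two preceding results are unwound, both $\mathrm{Trop}(A)$ and $\mathbb{S}_{\mathrm{fgmon}}(A)$ corepresent one and the same functor on the category of $\mathbb{T}$-algebras, namely $S \mapsto \{\textrm{monomial valuations on } A \textrm{ with values in } S\}$, so that the isomorphism will follow from Yoneda's lemma. Concretely, Proposition \ref{proposition: monomial valuation submodules} asserts that the assignment $f \mapsto f \circ w_{univ}$ is a bijection from the set of $\mathbb{T}$-algebra homomorphisms $\mathbb{S}_{\mathrm{fgmon}}(A) \to S$ to the set of monomial valuations on $A$ with values in $S$, and this bijection is natural in $S$: post-composing with a $\mathbb{T}$-algebra homomorphism $S \to S'$ transports one side to the other. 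Likewise, Proposition \ref{proposition: monomial valuation tropicalization} gives, for each $\mathbb{T}$-algebra $S$, a bijection between $\mathbb{T}$-algebra homomorphisms $\mathrm{Trop}(A) \to S$ and monomial valuations on $A$ with values in $S$.

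First I would check that the bijection of Proposition \ref{proposition: monomial valuation tropicalization} is also natural in $S$. This is immediate from its construction: it factors through the identification $M \simeq \mathbf{PMon}(A)$, the universal property of the quotient of $\mathbb{T}[M]$ by the bend congruence, and Lemma \ref{lemma: primitive monomial valuation}, each of which transports a map to $S$ to the corresponding map to $S'$ upon composition with $S \to S'$ (here one uses that a $\mathbb{T}$-algebra homomorphism fixes the image of $\mathbb{T}$).

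Next I would produce two mutually inverse homomorphisms. Applying Proposition \ref{proposition: monomial valuation tropicalization} with $S = \mathbb{S}_{\mathrm{fgmon}}(A)$ to the monomial valuation $w_{univ}$ yields a $\mathbb{T}$-algebra homomorphism $h \colon \mathrm{Trop}(A) \to \mathbb{S}_{\mathrm{fgmon}}(A)$, characterized by $h \circ w_0 = w_{univ}$, where $w_0 \colon \mathbf{Mon}(A) \to \mathrm{Trop}(A)$ is the monomial valuation corresponding to $\mathrm{id}_{\mathrm{Trop}(A)}$. Conversely, the universal property in Proposition \ref{proposition: monomial valuation submodules} applied to $w_0$ produces a unique $\mathbb{T}$-algebra homomorphism $g \colon \mathbb{S}_{\mathrm{fgmon}}(A) \to \mathrm{Trop}(A)$ with $g \circ w_{univ} = w_0$. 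Then $h \circ g \circ w_{univ} = h \circ w_0 = w_{univ}$, so $h \circ g = \mathrm{id}$ by the uniqueness clause of Proposition \ref{proposition: monomial valuation submodules}; and $g \circ h$ corresponds, under the (now natural) bijection of Proposition \ref{proposition: monomial valuation tropicalization}, to $(g \circ h) \circ w_0 = g \circ w_{univ} = w_0$, the same monomial valuation as $\mathrm{id}_{\mathrm{Trop}(A)}$, so $g \circ h = \mathrm{id}$. Hence $h$ is an isomorphism of $\mathbb{T}$-algebras.

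I do not expect any serious obstacle: the only point requiring care is the naturality in $S$ of the correspondence in Proposition \ref{proposition: monomial valuation tropicalization}, and that is routine bookkeeping once the construction behind that proposition is spelled out. Everything else is a formal consequence of the universal properties already established.
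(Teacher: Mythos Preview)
Your proposal is correct and is essentially the paper's own argument spelled out in detail: the paper's proof is the single line ``The two semirings have the same universal property according to Propositions \ref{proposition: monomial valuation submodules} and \ref{proposition: monomial valuation tropicalization},'' and you have carefully unwound exactly what that means. The naturality check you flag is already asserted in the statement of Proposition \ref{proposition: monomial valuation tropicalization} (``natural one-to-one correspondence''), so no extra work is actually needed there.
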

	\begin{proof}The two semirings have the same universal property according to Propositions \ref{proposition: monomial valuation submodules} and \ref{proposition: monomial valuation tropicalization}.
	\end{proof}
	
	\begin{myeg}\label{eg: canonical labelling}
		Let $A$ be a $K$-algebra.  By applying the universal property of the monoid algebra to the identity map, one obtains a homomorphism $K[A]\rightarrow A$.  This allows one to equip $A$ with the structure of a labelled $K$-algebra in a canonical way.  With this structure, all elements of $A$ are monomials.  Consequently finitely generated monomial $\mathcal{O}_K$-submodules are the same as finitely generated $\mathcal{O}_K$-submodules.  Furthermore, monomial valuations are the same as valuations on $A$ which extend the valuation on $K$.
	\end{myeg}
	
	\begin{myeg}
		One may tropicalize $\Spec A$ using the canonical labelled algebra structure in Example \ref{eg: canonical labelling}. Then one obtains $\Spec \mathbb{S}_\mathrm{fg}(A)$, where $\mathbb{S}_\mathrm{fg}(A)$ denotes the semiring of finitely generated $\mathcal{O}_K$-submodules of $A$. Note that the $\mathbb{T}$-valued points are just monomial valuations with values in $\mathbb{T}$.  Thus the $\mathbb{T}$-valued points of this tropicalization consists of all valuations on $A$ which extend the given valuation on $K$.  This tropical variety is essentially the Berkovich spectrum of $A$ except that we do not equip $A$ with a norm, and so do not require our multiplicative seminorms to be bounded.
	\end{myeg}

	\section{Lifting of line bundles} \label{section: Detropicalization of line bundles}
	
	As in the previous section, we let $K$ be a field equipped with a surjective valuation $\nu: K \rightarrow \mathbb{T}$ and $\mathcal{O}_K$ be the associated valuation ring throughout this section.
	
	In this section, we first prove for a $K$-algebra $A$, $\Spec A$ is homeomorphic to the saturated spectrum $\sSpec \mathbb{S}_\mathrm{fg}(A)$ (Definition \ref{definition: saturated spectrum}), where $\mathbb{S}_\mathrm{fg}(A)$ denotes the semiring of finitely generated $\mathcal{O}_K$-submodules of $A$. Then, we define a structure sheaf for the saturated spectrum $\Spec_s S$ of a semiring $S$ by using an inclusion $i:\Spec_s S \to \Spec S$. Equipped with this, we introduce a lifting of line bundles from $\Spec A$ to line bundles on $\sSpec \mathbb{S}_\mathrm{fg}(A)$. To be precise, when $A$ is a domain, we prove that one has the following isomorphism:
	\begin{equation}\label{eq: lifting}
		\Pic(\Spec A) \simeq \Pic(\sSpec \mathbb{S}_\mathrm{fg}(A)),
	\end{equation}
	When $A$ is a labelled algebra, \eqref{eq: lifting} implies that $\Pic(\Spec A)$ is isomorphic to $\Pic(\Spec_s \textrm{Trop}(A))$, where $\textrm{Trop}(A)$ is the coordinate semiring of the scheme-theoretic tropicalization of $\Spec A$. 
	
	\begin{rmk}
		We remark that the above results are different from the one that we obtained in \cite{jun2019picard}, where we prove that for a monoid scheme $X$ satisfying Condition \ref{condition: condition on open cover}, one has the following isomorphisms:
		\[
		\Pic(X) \simeq \Pic(X_k) \simeq \Pic(X_\mathbb{T}),
		\]
		where $k$ is a field and $\mathbb{T}$ is the tropical semifield. 
	\end{rmk}

	\begin{lem}\label{lemma: localization of submodules}
		Let $A$ be a $K$-algebra and let $\mathbb{S}_\mathrm{fg}(A)$ denote the semiring of finitely generated $\mathcal{O}_K$-submodules of $A$.  Let $f\in A$.  Then, we have
		\[
		\mathbb{S}_\mathrm{fg}(A)_{\angles{f}} \cong \mathbb{S}_\mathrm{fg}(A_f),
		\]	
		where $\langle f \rangle$ is the $\mathcal{O}_K$-submodule of $A$ generated by $f$.
	\end{lem}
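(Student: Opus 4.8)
The plan is to realize the isomorphism from the canonical localization map $\iota\colon A\to A_f$ and then check bijectivity directly. First I would record the elementary identity $\langle f\rangle^{n}=\langle f^{n}\rangle$ in $\mathbb{S}_\mathrm{fg}(A)$ (both sides equal $\mathcal{O}_K f^{n}$), so that localizing $\mathbb{S}_\mathrm{fg}(A)$ at $\langle f\rangle$ is the same as inverting the multiplicative set $\{\langle f^{n}\rangle\}_{n\ge 0}$. Since $\iota$ is $\mathcal{O}_K$-linear and multiplicative, sending a finitely generated $\mathcal{O}_K$-submodule $N=\langle x_1,\dots,x_p\rangle$ of $A$ to $\iota(N)=\langle \iota(x_1),\dots,\iota(x_p)\rangle$ gives a finitely generated $\mathcal{O}_K$-submodule of $A_f$, and one checks directly that $N\mapsto\iota(N)$ respects both semiring operations; thus it defines a semiring homomorphism $\phi\colon\mathbb{S}_\mathrm{fg}(A)\to\mathbb{S}_\mathrm{fg}(A_f)$. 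Because $\phi(\langle f\rangle)=\langle f/1\rangle$ is a unit in $\mathbb{S}_\mathrm{fg}(A_f)$ with inverse $\langle 1/f\rangle$, the universal property of semiring localization yields a homomorphism $\Phi\colon\mathbb{S}_\mathrm{fg}(A)_{\langle f\rangle}\to\mathbb{S}_\mathrm{fg}(A_f)$, explicitly $\Phi\bigl(N/\langle f^{n}\rangle\bigr)=\langle 1/f^{n}\rangle\,\iota(N)$.

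Surjectivity of $\Phi$ is just clearing denominators: a finitely generated $\mathcal{O}_K$-submodule $N'\subseteq A_f$ has generators $a_1/f^{n_1},\dots,a_k/f^{n_k}$, and taking $n\ge\max_i n_i$ together with $N:=\langle a_i f^{\,n-n_i}\rangle\subseteq A$ gives $\Phi\bigl(N/\langle f^{n}\rangle\bigr)=N'$. For injectivity, suppose $\Phi(M/\langle f^{m}\rangle)=\Phi(N/\langle f^{n}\rangle)$; multiplying through by $\langle f^{m+n}\rangle$ rewrites this as $\iota(f^{n}M)=\iota(f^{m}N)$ inside $A_f$, so it suffices to prove the claim: if finitely generated $\mathcal{O}_K$-submodules $P,Q\subseteq A$ satisfy $\iota(P)=\iota(Q)$ in $A_f$, then $f^{\ell}P=f^{\ell}Q$ in $A$ for some $\ell$ (which is exactly the equivalence relation defining the localization). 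To prove the claim, fix generators $x_i$ of $P$; from $\iota(x_i)\in\iota(Q)$ and $\mathcal{O}_K$-linearity of $\iota$ one gets $\iota(x_i)=\iota(y_i)$ for some $y_i\in Q$, hence $f^{k_i}x_i=f^{k_i}y_i$ in $A$; taking $k=\max_i k_i$ gives $f^{k}P\subseteq f^{k}Q$, the symmetric argument gives $f^{k'}Q\subseteq f^{k'}P$, and then $f^{\max(k,k')}P=f^{\max(k,k')}Q$. Tracing back, $f^{n}M$ and $f^{m}N$ agree after multiplication by a power of $f$, which forces $M/\langle f^{m}\rangle=N/\langle f^{n}\rangle$.

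The only step requiring genuine argument is injectivity, and its heart is the translation between equality of images in the localization $A_f$ and equality in $A$ after multiplication by a power of $f$, together with the observation that since $P$ has finitely many generators a single exponent works for all of them. Everything else — well-definedness of $\phi$ on sums and products, the invertibility of $\phi(\langle f\rangle)$, the factorization through $\mathbb{S}_\mathrm{fg}(A)_{\langle f\rangle}$, and surjectivity — is routine bookkeeping with finitely generated submodules.
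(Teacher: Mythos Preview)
Your proof is correct and follows essentially the same route as the paper: build the map $\mathbb{S}_\mathrm{fg}(A)\to\mathbb{S}_\mathrm{fg}(A_f)$ by pushing forward submodules along the localization $A\to A_f$, invoke the universal property of semiring localization to get $\Phi$, prove surjectivity by clearing denominators, and prove injectivity via the claim that $\iota(P)=\iota(Q)$ forces $f^{\ell}P=f^{\ell}Q$ for a single $\ell$ (using finiteness of the generating sets). The paper's write-up differs only in cosmetic ways---it phrases surjectivity in terms of principal submodules and passes to a common denominator before invoking the injectivity claim---but the substance is identical.
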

	\begin{proof}
		Let $\phi: A\rightarrow A_f$ be the localization.  Define the image map
		\[
		\phi_*: \mathbb{S}_\mathrm{fg}(A)\rightarrow \mathbb{S}_\mathrm{fg}(A_f), \quad N \mapsto \phi(N).
		\]
		$\phi_*$ clearly preserves the zero submodule and $\angles{1}$.  Suppose $N$ and $N'$ are generated by $S$ and $S'$ respectively.  Then $N+N'$ is generated by $S\cup S'$, so $\phi_*(N+N')$ is generated by $\phi(S\cup S') = \phi(S)\cup \phi(S')$.  Since this is the same set generating $\phi_*(N)+\phi_*(N)$, $\phi_*$ preserves addition.  Similarly, we have $\phi(SS') = \phi(S)\phi(S')$, and hence $\phi_*$ preserves multiplication.
		
		Note that $\phi_*(\angles{f})$ is a unit, with inverse $\angles{\frac{1}{f}}\subseteq A_f$.  Thus, by the universal property of localization applied to $\mathbb{S}_\mathrm{fg}(A)_{\angles{f}}$, the map $\phi_*$ induces a (unique) homomorphism
		\[
		\psi:\mathbb{S}_\mathrm{fg}(A)_{\angles{f}} \rightarrow \mathbb{S}_\mathrm{fg}(A_f)
		\] 
		given by
		\[ 
		\psi\left(\frac{N}{\angles{f}^n}\right) = \angles{\frac{1}{f}}^n \phi_*(N). 
		\]
		Consider a principal $\mathcal{O}_K$-submodule $N\subseteq A_f$.  A generator of $N$ has the form $\frac{g}{f^n}$ for some $g \in A$.  Then one has
		\[ 
		N = \angles{\frac{1}{f}}^n\angles{\frac{g}{1}} = \angles{\frac{1}{f}}^n\phi_*(\angles{g}) = \psi\left(\frac{\angles{g}}{\angles{f}^n}\right). 
		\]
		
		Since every finitely generated submodule is a sum of principal submodules, which are all in the image of $\psi$, it follows that $\psi$ is surjective.  
		
		To check injectivity, we will use the claim that if $\phi_*(N)=\phi_*(N')$ then there is some $n$ such that $f^n N = f^n N'$.  If $x\in N$, then there is some $y\in N'$ such that $\phi(x)=\phi(y)$, so for some $k$ we have $f^k x = f^k y$.  The same holds with $N$ and $N'$ swapped.  Let $n$ be the largest value of $k$ needed to apply this remark to all the generators of $N$ and of $N'$ (which are finitely generated).  Then one sees $f^n N = f^n N'$, establishing the claim.
		
		If we are given two elements which map to the same place under $\psi$, we may find a common denominator and write the elements as $\frac{N}{\angles{f}^m}$ and $\frac{N'}{\angles{f}^m}$.  Then
		\[ \angles{\frac{1}{f}}^n \phi_*(N) = \angles{\frac{1}{f}}^n \phi_*(N') \]
		which implies $\phi_*(N) = \phi_*(N')$.  Then 
		\[\frac{N}{\angles{f}^m} = \frac{f^n N}{\angles{f}^{m+n}} = \frac{f^n N'}{\angles{f}^{m+n}} = \frac{N'}{\angles{f}^m} \]
		This shows $\psi$ is injective.
	\end{proof}
	
	\begin{mydef}\label{definition: saturated spectrum}
		Let $S$ be a semiring.  The \emph{saturated spectrum} $\sSpec S$ is the set of saturated prime ideals of $S$ together with the subspace topology induced by the inclusion $\sSpec S \subseteq \Spec S$.  If $j: \sSpec S \rightarrow \Spec S$ is the inclusion, the structure sheaf on $\sSpec S$ is defined to be $j^{-1}(\mathcal{O}_{\Spec S})$.  Here $j^{-1}$ denotes the pullback functor on sheaves.
	\end{mydef}
	
	More precisely, the closed sets in $\sSpec S$ are of the form $V(I)=\{\mathfrak{p} \in \sSpec S \mid I \subseteq \mathfrak{p}\}$. For more details on this topology, see \cite[\S 3]{jun2020lattices}. 
	
	\begin{lem}\label{lemma: pullback of pushforward}Let $X$ be a topological space, $Y\subseteq X$ be a subspace and $i: Y\rightarrow X$ be the inclusion.  Let $\mathcal{F}$ be a sheaf of either abelian semigroups or abelian groups on $Y$. Then there is an isomorphism $\mathcal{F}\cong i^{-1}(i_*(\mathcal{F}))$ which is natural in $\mathcal{F}$.
	\end{lem}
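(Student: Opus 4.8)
The plan is to realize the claimed isomorphism as the counit of the adjunction $i^{-1} \dashv i_*$ and to check it is an isomorphism stalkwise. Write $\epsilon_{\mathcal{F}} \colon i^{-1}i_*\mathcal{F} \to \mathcal{F}$ for the counit. Since $\epsilon$ is, by construction, a natural transformation of functors on sheaves on $Y$, once we know that $\epsilon_{\mathcal{F}}$ is an isomorphism for every $\mathcal{F}$, the naturality assertion of the lemma is automatic. So the whole content is to show that $\epsilon_{\mathcal{F}}$ is an isomorphism.

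First I would reduce to stalks. A morphism of sheaves of abelian semigroups (resp. abelian groups) on $Y$ is an isomorphism if and only if it is an isomorphism on every stalk: this is the classical statement for sheaves of sets, transported across the forgetful functor to sheaves of sets, which is conservative and commutes with the formation of stalks (stalks being filtered colimits). Hence it suffices to show that $(\epsilon_{\mathcal{F}})_y$ is an isomorphism for each $y \in Y$.

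Then I would compute both stalks. By the standard compatibility of inverse images with stalks one has a canonical isomorphism $(i^{-1}\mathcal{G})_y \cong \mathcal{G}_{i(y)}$ for any sheaf $\mathcal{G}$ on $X$; taking $\mathcal{G} = i_*\mathcal{F}$ gives $(i^{-1}i_*\mathcal{F})_y \cong (i_*\mathcal{F})_{i(y)}$. Next, $(i_*\mathcal{F})_{i(y)} = \varinjlim_{U} \mathcal{F}(U \cap Y)$, the colimit over open neighborhoods $U$ of $i(y)$ in $X$. Because $Y$ carries the subspace topology, every open neighborhood of $y$ in $Y$ has the form $U \cap Y$ for some open neighborhood $U$ of $i(y)$ in $X$, so the indexing system is cofinal and this colimit is canonically $\mathcal{F}_y$. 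Chasing the definitions shows that the composite of these canonical identifications is exactly $(\epsilon_{\mathcal{F}})_y$, which is therefore an isomorphism; this proves the lemma.

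I do not expect a real obstacle, as the statement is a routine consequence of $i$ being a subspace inclusion. The only point that deserves attention is verifying that the canonical isomorphism $(i_*\mathcal{F})_{i(y)} \cong \mathcal{F}_y$ produced by the cofinality argument coincides with the map induced by $\epsilon_{\mathcal{F}}$ — that is, one should trace $\epsilon_{\mathcal{F}}$ through the adjunction rather than merely exhibit some abstract isomorphism of the two stalks. It is precisely here, and in the cofinality step itself, that the hypothesis on the subspace topology is used.
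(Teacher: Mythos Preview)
Your proposal is correct and follows essentially the same route as the paper: produce the map from the $i^{-1}\dashv i_*$ adjunction, reduce to stalks, and use the subspace-topology cofinality argument to identify $(i_*\mathcal{F})_{i(y)}$ with $\mathcal{F}_y$ via $(i^{-1}i_*\mathcal{F})_y\cong (i_*\mathcal{F})_{i(y)}$. The paper handles your last concern (that the abstract stalk isomorphism agrees with the counit) by invoking that the stalk functor on sheaves of sets has no nontrivial natural endomorphisms, so any natural bijection on stalks suffices; your suggestion to trace the counit directly is an equally valid way to close that gap.
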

	\begin{proof}
		Applying the adjunction between $i^{-1}$ and $i_*$ gives a natural morphism $i^{-1}(i_*(\mathcal{F})) \rightarrow \mathcal{F}$ (compatible with the semigroup or group structure).  We must show this map induces bijections on stalks either explicitly or by noting the fact that the stalk functor on the category of sheaves of sets has no nontrivial endomorphisms implies that it suffices to show there is a natural bijection on stalks.
		
		Let $y\in Y$.  Since neighborhoods of $y$ inside $Y$ are precisely the sets $U\cap Y$ where $U$ ranges over neighborhoods of $i(y)$ inside $X$, we have the following, where limits are taken over neighborhoods of $i(y)$:
		\[ \mathcal{F}_y = \varinjlim \mathcal{F}(U\cap Y) = \varinjlim i_*\mathcal{F}(U) = (i_*\mathcal{F})_{i(y)} \]
		
		Since the inverse image functor preserves stalks 
		\[ i^{-1}(i_*(\mathcal{F}))_y = (i_*\mathcal{F})_{i(y)} = \mathcal{F}_y \]
	\end{proof}

	\begin{pro}$\sSpec$ is a contravariant functor from the category of semirings to the category of semiringed spaces.
	\end{pro}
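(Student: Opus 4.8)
The plan is to build $\sSpec$ out of the ordinary spectrum functor $\Spec$ together with the observation that the canonical continuous map $h=\Spec\varphi$ restricts to saturated loci, and then to transport the structure-sheaf morphism of $h$ through an inverse-image functor. On objects there is nothing to prove: given a semiring $S$, Definition~\ref{definition: saturated spectrum} already endows $\sSpec S$ with the subspace topology from $j\colon \sSpec S\hookrightarrow\Spec S$ and with the sheaf of semirings $j^{-1}\mathcal{O}_{\Spec S}$, so $\sSpec S$ is a semiringed space.

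For a homomorphism $\varphi\colon S\to T$ I would first check the one genuinely non-formal point: the map $h=\Spec\varphi\colon \Spec T\to\Spec S$, $\mathfrak{q}\mapsto\varphi^{-1}(\mathfrak{q})$, carries saturated primes to saturated primes. Indeed, if $\mathfrak{q}$ is saturated and $x,\,x+y\in\varphi^{-1}(\mathfrak{q})$ then $\varphi(x),\,\varphi(x)+\varphi(y)\in\mathfrak{q}$, hence $\varphi(y)\in\mathfrak{q}$ and $y\in\varphi^{-1}(\mathfrak{q})$; primality of preimages is standard. Thus $h$ restricts to a (continuous, since all spaces carry subspace topologies) map $g=\sSpec\varphi\colon\sSpec T\to\sSpec S$ fitting into a commuting square $h\circ j_T=j_S\circ g$. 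For the sheaf part I would start from the structure morphism $h^{\#}\colon h^{-1}\mathcal{O}_{\Spec S}\to\mathcal{O}_{\Spec T}$ of the affine map $h$ and use the strict compatibility of inverse image with composition, $g^{-1}j_S^{-1}=(j_S g)^{-1}=(h j_T)^{-1}=j_T^{-1}h^{-1}$, to form
\[
g^{-1}\!\bigl(j_S^{-1}\mathcal{O}_{\Spec S}\bigr)=j_T^{-1}h^{-1}\mathcal{O}_{\Spec S}\xrightarrow{\ j_T^{-1}(h^{\#})\ } j_T^{-1}\mathcal{O}_{\Spec T},
\]
a morphism of sheaves of semirings $g^{\#}\colon g^{-1}\mathcal{O}_{\sSpec S}\to\mathcal{O}_{\sSpec T}$. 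Then $\sSpec\varphi:=(g,g^{\#})$ is a morphism of semiringed spaces $\sSpec T\to\sSpec S$; if one prefers to phrase the sheaf component via pushforwards, Lemma~\ref{lemma: pullback of pushforward} provides the translation, but the inverse-image description is cleaner because $(-)^{-1}$ composes strictly.

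Finally I would verify the functor axioms. For $\varphi=\mathrm{id}_S$ one has $h=\mathrm{id}$, so $g=\mathrm{id}$ and $h^{\#}=\mathrm{id}$, whence $\sSpec(\mathrm{id}_S)=\mathrm{id}_{\sSpec S}$. For a composite $S\xrightarrow{\varphi}T\xrightarrow{\psi}U$, contravariance of $\Spec$ gives $\Spec(\psi\varphi)=\Spec(\varphi)\circ\Spec(\psi)$, and restricting each factor to the respective saturated loci---legitimate by the point checked above---yields equality of underlying maps $\sSpec(\psi\varphi)=\sSpec(\varphi)\circ\sSpec(\psi)$. The sheaf components agree by a naturality chase combining functoriality of the affine structure morphisms, the identity $j^{-1}(\alpha\circ\beta)=j^{-1}\alpha\circ j^{-1}\beta$, and coherence of the canonical isomorphisms $g^{-1}j_S^{-1}\cong j_T^{-1}h^{-1}$ across the two squares. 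I expect this last diagram chase on the sheaf level to be the main (though still routine) obstacle: the content is entirely the $2$-functoriality of inverse image applied to the commuting squares $h\circ j_T=j_S\circ g$, and one just has to keep the coherence isomorphisms between composites of $(-)^{-1}$ functors straight.
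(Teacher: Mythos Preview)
Your proposal is correct and follows essentially the same route as the paper: both restrict $\Spec\varphi$ to saturated primes via the same elementary check, form the same commuting square $h\circ j_T=j_S\circ g$, and construct the sheaf morphism by applying $j_T^{-1}$ to the affine structure map $h^{\#}$ together with the identification $g^{-1}j_S^{-1}=j_T^{-1}h^{-1}$. The only noteworthy difference is in how compatibility with composition is verified: the paper checks it concretely on stalks (using that $j^{-1}$ preserves stalks and that the induced maps there are the usual localizations $f_\mathfrak{p}$, so $(gf)_\mathfrak{p}=g_\mathfrak{p}f_{g^{-1}(\mathfrak{p})}$), whereas you invoke the $2$-functoriality/coherence of inverse image; both arguments are routine and amount to the same thing.
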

	\begin{proof}
		For a semiring $S$, we let $j_S:\sSpec S \to \Spec S$ be the inclusion. Let $\phi: R\rightarrow S$ be a homomorphism of semirings.  Using functoriality of $\Spec$ and the fact that if $\mathfrak{p}$ is a saturated prime ideal then so is $\phi^{-1}(\mathfrak{p})$, we obtain the following continuous map:
		\[
		\sSpec\phi:\sSpec S\rightarrow \sSpec R.
		\]
		For the notational convenience, we let $Y=\Spec R$ and $X=\Spec S$. 
		Consider the following commutative diagram of topological spaces.
		\begin{equation}\label{eq: comm1}
			\begin{tikzcd}
				\sSpec S\arrow{d}[swap]{\sSpec \phi} \arrow{r}{j_S}
				& X \arrow{d}{\Spec \phi}\\
				\sSpec R \arrow{r}{j_R}
				&  Y
			\end{tikzcd}
		\end{equation}
		Since $\Spec \phi$ is a morphism of semiringed spaces, we have a natural morphism 
		\begin{equation}\label{eq: alpha}
			\alpha: (\Spec \phi)^{-1} \mathcal{O}_{Y} \rightarrow \mathcal{O}_{X}. 
		\end{equation}
		This induces the following morphism: 
		\begin{equation}\label{eq: def of structure map}
			j_S^{-1} (\alpha): j_S^{-1} (\Spec \phi)^{-1} \mathcal{O}_{Y} \rightarrow j_S^{-1} \mathcal{O}_{X} (= \mathcal{O}_{\sSpec S}).  
		\end{equation}
		From \eqref{eq: comm1}, we have 
		\[
		(\sSpec \phi)^{-1} \mathcal{O}_{\sSpec R} = (\sSpec \phi)^{-1} j_R^{-1} \mathcal{O}_{Y} = j_S^{-1} (\Spec \phi)^{-1} \mathcal{O}_{Y}.
		\]
		Thus we may regard $j_S^{-1} (\alpha)$ as a morphism 
		\[ 
		j_S^{-1}(\alpha): (\sSpec \phi)^{-1} \mathcal{O}_{\sSpec R}\rightarrow \mathcal{O}_{\sSpec S}. 
		\]
		Now, the adjunction between $(\sSpec \phi)^{-1}$ and $(\sSpec \phi)_*$ equips $\Spec_s \phi$ with the structure of a morphism of semiringed spaces:
		\[
		(\Spec_s \phi)^\#:\mathcal{O}_{\sSpec R}\rightarrow (\Spec_s \phi)_*\mathcal{O}_{\sSpec S}.
		\]
		
		Next, we check compatibility with composition. Let $f:R \to H$ and $g:H \to S$ be homomorphisms of semirings. It is clear that as a continuous map
		\[
		\sSpec (gf)=\sSpec(f)\sSpec(g).
		\]
		We note that the inverse image functor preserves stalks since the stalk can be considered as the inverse image sheaf along the inclusion of a point. The map \eqref{eq: alpha} induces $f_\mathfrak{p}:R_{f^{-1}(\mathfrak{p})} \to H_\mathfrak{p}$ on stalks even at unsaturated primes $\mathfrak{p}$. Thus $j_S^{-1}(\alpha)$ in \eqref{eq: def of structure map} induces $f_\mathfrak{p}$ on stalks at saturated primes, and the identifications made above do not change stalks. In particular, the composition is well-defined at each stalk and we have $g_\mathfrak{p}f_{g^{-1}(\mathfrak{p})}=(gf)_{\mathfrak{p}}$. This shows the compatibility with composition, and hence $\sSpec$ is a contravariant functor. 
	\end{proof}
	
	\begin{mythm}\label{theorem: topology of space of submodules}
		Let $A$ be a $K$-algebra.  Let $X=\Spec A$ and let $Y=\sSpec\mathbb{S}_\mathrm{fg}(A)$.  Then there is a canonical homeomorphism $X\cong Y$.  Under this homeomorphism, basic open sets $D(f)\subseteq \Spec A$ correspond to basic open sets $D(\angles{f})\subseteq \sSpec\mathbb{S}_\mathrm{fg}(A)$.  
	\end{mythm}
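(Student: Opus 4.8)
The plan is to exhibit the homeomorphism by hand: construct mutually inverse set maps
$\Phi:\Spec A\to\sSpec\mathbb{S}_\mathrm{fg}(A)$ and $\Psi:\sSpec\mathbb{S}_\mathrm{fg}(A)\to\Spec A$, check they are well defined and inverse to each other, and then check that each pulls basic opens back to basic opens. For a prime $\mathfrak p\subseteq A$ I would set $\Phi(\mathfrak p)=\{N\in\mathbb{S}_\mathrm{fg}(A)\mid N\subseteq\mathfrak p\}$, and for a saturated prime $\mathfrak q$ of $\mathbb{S}_\mathrm{fg}(A)$ I would set $\Psi(\mathfrak q)=\{f\in A\mid\angles f\in\mathfrak q\}$, where $\angles f=\mathcal{O}_K f$. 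The elementary identities I would use throughout are $\angles{fg}=\angles f\angles g$, $\angles{f_1,\dots,f_n}=\angles{f_1}+\dots+\angles{f_n}$, $\angles{-f}=\angles f$, the equivalence $\angles f\subseteq\mathfrak p\Longleftrightarrow f\in\mathfrak p$ (valid since $1\in\mathcal{O}_K\subseteq A$ and $\mathfrak p$ is an $A$-ideal), and the fact that $\mathbb{S}_\mathrm{fg}(A)$ is additively idempotent with canonical order given by inclusion of submodules.

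First I would record a small general fact: any saturated ideal $I$ of a commutative semiring is downward closed, i.e. $a\le b$ and $b\in I$ imply $a\in I$. Indeed $a\le b$ means $a+b=b$, so applying Definition \ref{definition: ideals for semirings}(3) with $x=b\in I$ and $y=a$ — noting $x+y=b+a=b\in I$ — gives $a\in I$. With this, $\Phi(\mathfrak p)$ is a saturated prime: it is an ideal ($0\subseteq\mathfrak p$; $N,N'\subseteq\mathfrak p$ forces $N+N'=\angles{N\cup N'}\subseteq\mathfrak p$; and $MN\subseteq\mathfrak p$ whenever $N\subseteq\mathfrak p$), it is proper and prime ($\angles 1=\mathcal{O}_K\not\subseteq\mathfrak p$, and if $m\in M\setminus\mathfrak p$, $n\in N\setminus\mathfrak p$ then $mn\in MN\setminus\mathfrak p$ by primeness of $\mathfrak p$), and it is visibly a down-set, hence saturated. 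Dually $\Psi(\mathfrak q)$ is a prime ideal of $A$: closure under the $A$-action and under negation follow from $\angles{af}=\angles a\angles f$ and $\angles{-f}=\angles f$; properness and primeness from $\angles 1\notin\mathfrak q$ and $\angles{fg}=\angles f\angles g$; and closure under addition is the one subtle point — from $\angles f,\angles g\in\mathfrak q$ we get $\angles f+\angles g=\angles{f,g}\in\mathfrak q$, and since $\angles{f+g}\subseteq\angles{f,g}$ the downward closure of $\mathfrak q$ yields $\angles{f+g}\in\mathfrak q$.

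Next I would verify $\Psi\circ\Phi=\mathrm{id}$ and $\Phi\circ\Psi=\mathrm{id}$. The first is immediate from $\angles f\subseteq\mathfrak p\Leftrightarrow f\in\mathfrak p$. For the second, the content is the equivalence $N\subseteq\Psi(\mathfrak q)\Leftrightarrow N\in\mathfrak q$: if $N\in\mathfrak q$ then for each $f\in N$ we have $\angles f\le N\in\mathfrak q$, so $\angles f\in\mathfrak q$ by downward closure; conversely, if every generator of $N$ lies in $\Psi(\mathfrak q)$, then writing $N=\angles{f_1,\dots,f_n}=\angles{f_1}+\dots+\angles{f_n}$ exhibits $N$ as a finite sum of elements of $\mathfrak q$, hence $N\in\mathfrak q$. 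This is the only place where the finite generation built into $\mathbb{S}_\mathrm{fg}(A)$ — as opposed to the full lattice of $\mathcal{O}_K$-submodules — is essential, and it is the step I expect to be the main obstacle conceptually (the statement would simply fail for arbitrary submodules).

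Finally, for the topology I would note $\Phi^{-1}\big(D(\angles f)\cap\sSpec\mathbb{S}_\mathrm{fg}(A)\big)=D(f)$ and $\Psi^{-1}(D(f))=D(\angles f)\cap\sSpec\mathbb{S}_\mathrm{fg}(A)$, both being restatements of $f\notin\mathfrak p\Leftrightarrow\angles f\notin\Phi(\mathfrak p)$ and $\angles f\notin\mathfrak q\Leftrightarrow f\notin\Psi(\mathfrak q)$. Since $\{D(f)\}_{f\in A}$ is a basis of $\Spec A$, and since downward closure gives $D(\angles{f_1,\dots,f_n})\cap\sSpec\mathbb{S}_\mathrm{fg}(A)=\bigcup_i\big(D(\angles{f_i})\cap\sSpec\mathbb{S}_\mathrm{fg}(A)\big)$, so that the principal basic opens $D(\angles f)$ already form a basis of $\sSpec\mathbb{S}_\mathrm{fg}(A)$, both $\Phi$ and $\Psi$ are continuous; being mutually inverse, $\Phi$ is the desired homeomorphism, and the identity $\Phi(D(f))=D(\angles f)\cap\sSpec\mathbb{S}_\mathrm{fg}(A)$ is precisely the asserted correspondence of basic opens.
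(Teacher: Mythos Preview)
Your argument is correct and complete. The one tiny looseness is that you prove ``saturated $\Rightarrow$ downward closed'' and then use the converse (``$\Phi(\mathfrak p)$ is visibly a down-set, hence saturated''); in an additively idempotent semiring this converse is immediate, since $y\le x+y$ always holds and so $x,x+y\in I$ with $I$ downward closed forces $y\in I$, but it is worth saying so explicitly.

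Your route is genuinely different from the paper's. The paper does not build the maps $\Phi,\Psi$ by hand; instead it invokes a general lattice-theoretic correspondence (from \cite{jun2020lattices}) between $\mathcal{O}_K$-submodules of $A$ and saturated subsemigroups of $\mathbb{S}_\mathrm{fg}(A)$, observes that this correspondence respects multiplication and the lattice structure and hence matches ideals with saturated ideals, and then appeals to a second external result (Takagi \cite{takagi2010construction}) that the Zariski topology is determined by the multiplicative lattice of ideals. Your approach trades this conceptual packaging for a fully self-contained argument: you never leave the two spaces in question, and the only nontrivial step is exactly the one you flag, namely that $\Phi\circ\Psi=\mathrm{id}$ uses finite generation in an essential way. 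What the paper's approach buys is a uniform explanation of \emph{why} the correspondence exists (it is an instance of a general phenomenon for algebraic lattices), whereas what your approach buys is independence from outside references and a transparent identification of the basic opens $D(f)\leftrightarrow D(\angles f)$, which the paper has to extract separately at the end.
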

	\begin{proof}
		From the theory of algebraic lattices (see \cite[Theorem 3.28]{jun2020lattices}) we have a one-to-one correspondence 
		\[ \{\mathcal{O}_K\textrm{-submodules of }A\} \simeq \{\textrm{saturated subsemigroups of }\mathbb{S}_\mathrm{fg}(A)\}\]
		compatible with multiplication and with the lattice structure, which sends an $\mathcal{O}_K$-submodule $N$ to the saturated subsemigroup it generates in the semiring $\mathbb{S}_\mathrm{fg}(A)$.  
		
		Note that the ideals of $A$ are precisely the $\mathcal{O}_K$-submodules $I\subseteq A$ such that for any other submodule $N\subseteq A$ one has $IN\subseteq I$. Under the above correspondence these correspond to saturated subsemigroups $J\subseteq \mathbb{S}_\mathrm{fg}(A)$ such that for any other saturated subsemigroup $P\subseteq \mathbb{S}_\mathrm{fg}(A)$, one has $JP\subseteq J$.  But such saturated subsemigroups are precisely the saturated ideals.  Thus one has a one-to-one correspondence
		\[ \{\textrm{ideals of }A\} \simeq \{\textrm{saturated ideals of }\mathbb{S}_\mathrm{fg}(A)\}\]
		which preserves multiplication and the lattice structure. For future use, we note that if an $\mathcal{O}_K$-submodule $N\subseteq A$ corresponds to a saturated subsemigroup $S\subseteq \mathbb{S}_\mathrm{fg}(A)$, then the ideal generated by $N$ in $A$ corresponds to the ideal generated by $S$ in $\mathbb{S}_\mathrm{fg}(A)$.  In particular, the ideal generated by $f\in A$ is generated by the submodule $\angles{f}\subseteq A$, so corresponds to the saturated ideal generated by $\angles{f}\in \mathbb{S}_\mathrm{fg}(A)$.
		
		The lattice of ideals together with the multiplicative structure on ideals determines the Zariski topology (see \cite{takagi2010construction}).  So we have a homeomorphism 
		\[
		\Spec A\simeq \sSpec\mathbb{S}_\mathrm{fg}(A).
		\]
		Furthermore, since the ideal of $A$ generated by $f$ corresponds to the saturated ideal of $\mathbb{S}_\mathrm{fg}(A)$ generated by $\angles{f}$, their associated closed sets correspond under this homeomorphism.  Taking complements, we note that the homeomorphism identifies the open sets $D(f)$ and $D(\angles{f})$.
	\end{proof}

	\begin{lem}\label{lemma: structure sheaf of Y}
		Let $A$ be a $K$-algebra which is a domain, and $Y=\sSpec\mathbb{S}_\mathrm{fg}(A)$. For each basic open subset $D(\angles{f})$, we define
		\[
		\mathcal{F}(D(\angles{f})) =\mathbb{S}_\mathrm{fg}(A_f).
		\]
		Then $\mathcal{F}$ is a sheaf on $Y$.
	\end{lem}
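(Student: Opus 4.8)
The plan is to verify the sheaf axioms on the basis of $Y$ formed by the sets $D(\langle f\rangle)$, $f\in A$, and then appeal to the standard fact that a sheaf on a basis extends uniquely to the whole space. By Theorem \ref{theorem: topology of space of submodules} the homeomorphism $\Spec A\cong Y$ carries the basic open $D(f)$ to $D(\langle f\rangle)$ and $D(f)\cap D(g)=D(fg)$ to $D(\langle f\rangle)\cap D(\langle g\rangle)=D(\langle f\rangle\langle g\rangle)$, so $\{D(\langle f\rangle)\}_{f\in A}$ is a basis of $Y$ closed under finite intersections, and it suffices to show that $D(f)\mapsto\mathbb{S}_\mathrm{fg}(A_f)$ defines a sheaf of semirings on the corresponding basis of $\Spec A$. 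For the restriction maps: when $D(g)\subseteq D(f)$ one has $g\in\sqrt{(f)}$, so $f$ is invertible in $A_g$ and there is a unique localization homomorphism $A_f\to A_g$ over $A$; applying the image map $\phi_\ast$ of Lemma \ref{lemma: localization of submodules} yields a semiring homomorphism $\mathbb{S}_\mathrm{fg}(A_f)\to\mathbb{S}_\mathrm{fg}(A_g)$. Well-definedness of $\mathcal{F}$ on basic opens and functoriality of these restrictions follow from uniqueness of the localization maps; in particular $D(f)=D(g)$ gives $\sqrt{(f)}=\sqrt{(g)}$, hence mutually inverse canonical maps $A_f\rightleftarrows A_g$, i.e. a canonical identification $\mathbb{S}_\mathrm{fg}(A_f)\cong\mathbb{S}_\mathrm{fg}(A_g)$.

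The domain hypothesis is what makes the actual sheaf condition formal. Since $A$ is a domain, every localization $A_h$ with $h\neq0$ embeds into $L:=\mathrm{Frac}(A)$, and with respect to these embeddings each restriction map above sends a finitely generated $\mathcal{O}_K$-submodule $N\subseteq A_f\subseteq L$ to the very same subset $N$, now regarded inside $A_g\subseteq L$ (the $\mathcal{O}_K$-module generated by the image of a submodule under an injection is that image). Hence each restriction map is injective on underlying sets, so separatedness of $\mathcal{F}$ is immediate. For gluing, take a basic cover $D(f)=\bigcup_i D(g_i)$; we may delete any $g_i$ with $D(g_i)=\emptyset$, i.e. $g_i=0$, and if all are deleted then $D(f)=\emptyset$, $A_f=0$, and the condition is trivial. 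Otherwise $\Spec A$ is irreducible, so the nonempty open $D(f)$ is irreducible and any two of the $D(g_i)$ meet; therefore for a compatible family $(N_i)_i$ the agreement of the restrictions of $N_i$ and $N_j$ in $\mathbb{S}_\mathrm{fg}(A_{g_ig_j})$ says exactly $N_i=N_j$ as subsets of $L$. Thus all $N_i$ coincide with a single finitely generated $\mathcal{O}_K$-submodule $N_0\subseteq L$ with $N_0\subseteq A_{g_i}$ for every $i$; since for a domain $A_f=\bigcap_{\mathfrak p\in D(f)}A_{\mathfrak p}=\bigcap_i A_{g_i}$, we get $N_0\in\mathbb{S}_\mathrm{fg}(A_f)$, which restricts to each $N_i$ and is the unique such section by separatedness.

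The only non-formal inputs are Lemma \ref{lemma: localization of submodules} (which pins down the restriction maps and is where $A$ being a $K$-algebra is used) and the identity $A_f=\bigcap_{\mathfrak p\in D(f)}A_{\mathfrak p}$ for a domain, whose proof is the elementary observation that $x\in\bigcap_{\mathfrak p\in D(f)}A_{\mathfrak p}$ forces the conductor ideal $(A:_A x)$ to avoid every prime not containing $f$, hence to contain a power of $f$. I expect the only place that needs care is the bookkeeping around the degenerate cases ($f=0$, the empty cover, covers with empty members) together with checking that the restriction maps one writes down are genuinely functorial and compatible with the identifications coming from equalities $D(f)=D(g)$; once the domain hypothesis turns compatible local data into literally equal subsets of $\mathrm{Frac}(A)$, the gluing step carries no further content.
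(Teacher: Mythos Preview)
Your argument is correct. The presheaf setup and the separatedness step match the paper's proof essentially verbatim: both use the domain hypothesis to embed every $A_h$ into $L=\mathrm{Frac}(A)$ and thereby identify a section with a fixed subset of $L$, so that agreement on overlaps becomes literal equality. The genuine difference is in the gluing step. The paper produces the global section by a partition-of-unity argument: after writing each $h_i$ as $N_i/f_i^{k_i}$ and observing that all these coincide with a common $M\subseteq L$, it chooses $a_1,\ldots,a_n$ with $\sum a_j f_j^{k_j}=1$ (implicitly passing to a finite subcover) and bounds $M\subseteq a_1N_1+\cdots+a_nN_n\subseteq A$. You instead invoke the identity $A_f=\bigcap_{\mathfrak p\in D(f)}A_{\mathfrak p}=\bigcap_i A_{g_i}$ to place the common module $N_0$ inside $A_f$ directly. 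Your route is a bit slicker: it avoids common denominators and the reduction to a finite cover, and it makes transparent that nothing beyond the standard local nature of a domain's structure sheaf is being used. The paper's route, on the other hand, is closer in spirit to the usual proof that $\widetilde{M}$ is a sheaf for an arbitrary ring, and does not rely on the identification of all local rings inside a common fraction field (though of course that identification is what makes your shortcut available here).
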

	\begin{proof}
		We only have to check the sheaf axiom for $\mathcal{U}=\{D(\angles{f})\}_{f \in A}$ since then $\mathcal{F}$ will uniquely extend to define a sheaf on $Y$. We first check it is well-defined and is a presheaf.  If $D(\angles{f})\subseteq D(\angles{g})$, then by Theorem \ref{theorem: topology of space of submodules}, we have $D(f)\subseteq D(g)$ so we get a map $A_g \rightarrow A_f$.  This induces a map $\mathbb{S}_\mathrm{fg}(A_g)\rightarrow \mathbb{S}_\mathrm{fg}(A_f)$, which we choose to be our restriction map.  Clearly the composition of restriction maps is a restriction map, so the presheaf axiom is satisfied.  If $D(\angles{f})= D(\angles{g})$, then we even have $A_f = A_g$ (as subsets of $\Frac(A)$), so our restriction map is the identity.  Thus $\mathcal{F}(D(\angles{f}))=\mathcal{F}(D(\angles{g}))$, i.e. $\mathcal{F}(D(\angles{f}))$ is well-defined.
		
		To check the sheaf axiom, we first consider the case when $f=1$, that is, the open subsets $D(\angles{f_i})$ form an open cover of $Y$. Suppose that we have a section $h_i$ over each $D(\angles{f_i}) = \mathbb{S}_\mathrm{fg}(A_{f_i})$, and that we have $h,h' \in \mathbb{S}_\mathrm{fg}(A)$ such that $h_{D(\angles{f_i})}=h_{D(\angles{f_i})}'=h_i$. In other words, $h,h'$ are finitely generated $\mathcal{O}_K$-submodules of $A$ such that $h_{f_i}=h'_{f_i}$, where $h_{f_i}$ is the image of $h$ in $A_{f_i}$. But, since $A$ is a domain, the localization map $A \to A_{f_i}$ is injective, and hence $h=h'$. In particular, if the sections $h_i$ can be glued, then it should be unique.

		Next, we prove that the sections $h_i$ can be glued. In fact, we can find a common denominator for the generators of these submodules $h_i$ in such a way that the sections $h_i$ can be written as $N_i/f_i^{k_i}$, where $N_i$ is a finitely generated $\mathcal{O}_K$-submodule of $A$.  For any $i$ and $j$, choose $g_{ij}$ such that 
		\[
		D(\angles{g_{ij}}) \subseteq D(\angles{f_i}) \cap D(\angles{f_j}).
		\]
		Then $N_i / f_i^{k_i}$ and $N_j / f_j^{k_j}$ are equal as submodules of $A_{g_{ij}}$ and thus as submodules of $\Frac(A)$; call this common submodule $M$.  Now choose $a_1,\dots,a_n$ such that 
		\[
		a_1f_1^{k_1} +\cdots + a_n f_n^{k_n} = 1
		\]
		inside $A$; this is possible since the $D(f_i)$ are an open cover of $\Spec A$ by Theorem \ref{theorem: topology of space of submodules}. Now $M$ is a subset of (but, in general, not equal to) the following module:
		\begin{equation}\label{eq: module}
			a_1f_1^{k_1}M + \cdots + a_nf_n^{k_n}M = a_1 N_1 +\cdots + a_n N_n.
		\end{equation}
		But the module \eqref{eq: module} is a finitely generated $\mathcal{O}_K$-submodule of $A$, and hence $M$ is a finitely generated $\mathcal{O}_K$-submodule of $A$, so the sections can be glued. 
		
		Finally, the general case of $f$, we can set $A'=A_f$, $f'_i=ff_i$ reducing the general case to the case we have just proved. 
	\end{proof}

	\begin{mythm}\label{theorem: sheaf of space of submodules}
		Let $A$ be a $K$-algebra which is a domain. Let $X=\Spec A$ and let $Y=\sSpec\mathbb{S}_\mathrm{fg}(A)$ be the saturated spectrum of the semiring $\mathbb{S}_\mathrm{fg}(A)$.  
		With the homeomorphism $X\simeq Y$ in Theorem \ref{theorem: topology of space of submodules}, one has isomorphisms $\mathcal{O}_{Y}(D(\angles{f})) \cong \mathbb{S}_\mathrm{fg}\left(\mathcal{O}_X(D(f))\right)$.  These isomorphisms are compatible with restriction in the sense that the following diagram commutes for any inclusion of basic opens $D(f)\subseteq D(g)\subseteq X$:
		\begin{equation}\label{eq: restriction compatibility of spectrum of submodules}
			\begin{tikzcd}
				\mathcal{O}_{Y}(D(\angles{g})) \arrow{d}{\simeq} \arrow{r}{}
				& \mathcal{O}_{Y}(D(\angles{f})) \arrow{d}{\simeq} \\
				\mathbb{S}_\mathrm{fg}\left(\mathcal{O}_X(D(g))\right)\arrow{r}{}
				& \mathbb{S}_\mathrm{fg}\left(\mathcal{O}_X(D(f))\right)
			\end{tikzcd}
		\end{equation}
		where the top arrow is the restriction, and the bottom arrow sends a submodule to its image under the restriction map $\mathcal{O}_X(D(g))\rightarrow\mathcal{O}_X(D(f))$.
	\end{mythm}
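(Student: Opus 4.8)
The plan is to promote the pointwise statement to an isomorphism of sheaves and then verify it on a basis. Since $A$ is a domain we have $\mathcal{O}_X(D(f)) = A_f$, so the asserted isomorphism reads $\mathcal{O}_Y(D(\langle f\rangle)) \cong \mathbb{S}_\mathrm{fg}(A_f)$. By Lemma \ref{lemma: structure sheaf of Y} the right-hand side, together with the indicated restriction maps, is exactly the sheaf $\mathcal{F}$ on $Y$, and by Theorem \ref{theorem: topology of space of submodules} the sets $D(\langle f\rangle)$ form a basis of $Y$; hence it is enough to construct a natural isomorphism of sheaves $\mathcal{F} \cong \mathcal{O}_Y$, and for that one may argue on the basis $\{D(\langle f\rangle)\}$.

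First I would carry out the comparison one level up, on $\Spec\mathbb{S}_\mathrm{fg}(A)$. From the construction of the structure sheaf of an affine semiring scheme, $\mathcal{O}_{\Spec\mathbb{S}_\mathrm{fg}(A)}(D(\langle f\rangle)) = \mathbb{S}_\mathrm{fg}(A)_{\langle f\rangle}$, and Lemma \ref{lemma: localization of submodules} identifies this canonically with $\mathbb{S}_\mathrm{fg}(A_f)$. Along the inclusion $j\colon Y \hookrightarrow \Spec\mathbb{S}_\mathrm{fg}(A)$ of Definition \ref{definition: saturated spectrum}, these isomorphisms assemble into a morphism of sheaves $\mathcal{O}_{\Spec\mathbb{S}_\mathrm{fg}(A)} \to j_*\mathcal{F}$; I would then check it is an isomorphism on stalks at saturated primes, where, after applying Lemma \ref{lemma: localization of submodules} and the ideal correspondence in Theorem \ref{theorem: topology of space of submodules}, the comparison comes down to matching the two relevant localizations of $\mathbb{S}_\mathrm{fg}(A)$, using that a finitely generated submodule not contained in a prime admits a principal generator not contained in it. Applying $j^{-1}$ and combining with the isomorphism $\mathcal{F} \cong j^{-1}(j_*\mathcal{F})$ from Lemma \ref{lemma: pullback of pushforward} then produces $\mathcal{F} \cong j^{-1}(\mathcal{O}_{\Spec\mathbb{S}_\mathrm{fg}(A)}) = \mathcal{O}_Y$.

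The commutativity of \eqref{eq: restriction compatibility of spectrum of submodules} should then be automatic: the comparison maps on basic opens were built from the restriction maps of $\mathcal{O}_{\Spec\mathbb{S}_\mathrm{fg}(A)}$ and the naturality of Lemma \ref{lemma: localization of submodules} with respect to the localizations $A_g \to A_f$ for $D(f) \subseteq D(g)$, which is precisely the assertion that passing a submodule through $\mathcal{O}_X(D(g)) \to \mathcal{O}_X(D(f))$ matches restriction in $\mathcal{O}_Y$. I expect the main obstacle to be the bookkeeping around the inverse image sheaf: since $\mathcal{O}_Y = j^{-1}(\mathcal{O}_{\Spec\mathbb{S}_\mathrm{fg}(A)})$ is a sheafification one cannot simply read $\mathcal{O}_Y(D(\langle f\rangle))$ off as $\mathcal{O}_{\Spec\mathbb{S}_\mathrm{fg}(A)}(D(\langle f\rangle))$, and it is the domain hypothesis on $A$ — which makes each localization map $A \to A_f$ injective and underlies the gluing step in Lemma \ref{lemma: structure sheaf of Y} — that controls these sections; one must also take care that the full semiring structure, not merely the additive structure, is carried through Lemma \ref{lemma: pullback of pushforward}, whose statement is given for sheaves of abelian semigroups.
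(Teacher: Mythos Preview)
Your proposal is correct and follows essentially the same route as the paper: both reduce to showing $\mathcal{F}\cong\mathcal{O}_Y$ by comparing $j_*\mathcal{F}$ with $\mathcal{O}_Z$ on $Z=\Spec\mathbb{S}_\mathrm{fg}(A)$ via Lemma~\ref{lemma: localization of submodules}, and then pulling back through $j^{-1}$ using Lemma~\ref{lemma: pullback of pushforward}. The only difference is cosmetic: the paper verifies the comparison $j_*\mathcal{F}\cong\mathcal{O}_Z$ by an explicit element-level diagram chase on the sections over the opens $D_Z(\langle f\rangle)$, whereas you propose to check the induced map on stalks at saturated primes and then apply $j^{-1}$; the observation you flag (that a finitely generated submodule not in a saturated prime has a principal summand not in it) is exactly what makes the $D(\langle f\rangle)$ cofinal at such points, so your stalk computation goes through.
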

	\begin{proof}Let $Z = \Spec\mathbb{S}_\mathrm{fg}(A)$ and let $j: Y\rightarrow Z$ be the inclusion.  To avoid confusion, we will use $D_Y$ and $D_Z$ rather than $D$ in our notation for basic open sets of $Y$ or $Z$. Let $\mathcal{F}$ be the sheaf defined in Lemma \ref{lemma: structure sheaf of Y}. We claim that there is a canonical isomorphism $j_*(\mathcal{F}) \cong \mathcal{O}_Z$. In fact, on basic open sets, one has
		\[ 
		j_*(\mathcal{F})(D_Z(\angles{f})) = \mathcal{F}(D_Y(\angles{f}) = \mathbb{S}_{\textrm{fg}}(A_f) \cong (\mathbb{S}_{\textrm{fg}}(A))_{\angles{f}} = \mathcal{O}_Z(D_Z(\angles{f}).
		\]
		
		We need to verify compatibility of these isomorphisms with restriction maps, i.e., commutativity of the following diagram whenever $D_Z(\angles{g}) \subseteq D_Z(\angles{f})$.  Note that in this case, we also have $D_Y(\angles{g}) \subseteq D_Y(\angles{f})$.
		
		\begin{equation}
			\begin{tikzcd}
				j_*(\mathcal{F})(D_Z(\angles{f})) \arrow{d}{} \arrow{r}{\simeq}
				& \mathcal{F}(D_Y(\angles{f}) \arrow{d}{} \arrow{r}{\simeq}
				& \mathcal{O}_Z(D_Z(\angles{f}) \arrow{d}{}\\
				j_*(\mathcal{F})(D_Z(\angles{g})) \arrow{r}{\simeq}
				& \mathcal{F}(D_Y(\angles{g}) \arrow{r}{\simeq}
				& \mathcal{O}_Z(D_Z(\angles{g}) 
			\end{tikzcd}
		\end{equation}
		
		The left square commutes by the construction of the pushforward sheaf.  The right square can be expressed as follows:
		\begin{equation}
			\begin{tikzcd}
				\mathbb{S}_{\textrm{fg}}(A_f) \arrow{d}{} \arrow{r}{\simeq}
				& \mathbb{S}_{\textrm{fg}}(A)_{\angles{f}} \arrow{d}{}\\
				\mathbb{S}_{\textrm{fg}}(A_g) \arrow{r}{\simeq}
				& \mathbb{S}_{\textrm{fg}}(A)_{\angles{g}}
			\end{tikzcd}
		\end{equation}
		
		By the description of the restriction map of $\mathcal{F}$ given in the proof of Lemma \ref{lemma: structure sheaf of Y}, the left arrow is induced by the localization map $A_f\rightarrow A_g$.  By definition of $\mathcal{O}_Z$, the right arrow is the localization map $\mathbb{S}_{\textrm{fg}}(A)_{\angles{f}}\rightarrow \mathbb{S}_{\textrm{fg}}(A)_{\angles{g}}$.  The top and bottom arrows are described in Lemma \ref{lemma: localization of submodules}.
		
		To check this square commutes, we let $M\in \mathbb{S}_{\textrm{fg}}(A_f)$ be a principal $\mathcal{O}_K$-submodule and we check that the image of $M$ inside $\mathbb{S}_{\textrm{fg}}(A)_{\angles{g}}$ is the same regardless of which path along the square we take.  This is enough because $\mathbb{S}_{\textrm{fg}}(A_f)$ is generated by principal $\mathcal{O}_K$-submodules.  We may write $M$ as $M=\angles{\frac{h}{f^k}}$ for some $h\in A$.  Also note that since $D(g)\subseteq D(f)$, we have $g^r=af$ for some $a\in A$ and $r \in \mathbb{N}$. Since $D(g)=D(g^r)$, we may assume that $g=af$.
		
		Following the diagram clockwise, $\angles{\frac{h}{f^k}}$ maps to $\frac{\angles{h}}{\angles{f}^k} \in \mathbb{S}_{\textrm{fg}}(A)_{\angles{f}}$, which maps to $\frac{\angles{a}^k \angles{h}}{\angles{g}^k} \in \mathbb{S}_{\textrm{fg}}(A)_{\angles{g}}$ thanks to the identity $\angles{g}=\angles{a}\angles{f}$.  Following the diagram counterclockwise, $\angles{\frac{h}{f^k}}$ maps to $\angles{\frac{a^k h}{g^k}}\in \mathbb{S}_{\textrm{fg}}(A_g)$ which maps to $\frac{\angles{a^k h}}{\angles{g}^k} \in \mathbb{S}_{\textrm{fg}}(A)_{\angles{g}}$.  By inspection, this is the same thing we got when following the diagram clockwise.  This establishes commutativity of the above diagram and completes the proof of the claim. In particular, we have 
		\[ \mathcal{O}_Y = j^{-1}(\mathcal{O}_Z) \cong j^{-1}(j_*(\mathcal{F})) \cong \mathcal{F} \]
		using the definition of $\mathcal{O}_Y$, the claim we just established, and Lemma \ref{lemma: pullback of pushforward}.
		
		We now turn to the proof of the theorem, noting that throughout the statement of the theorem we may replace $\mathcal{O}_Y$ with the isomorphic sheaf $\mathcal{F}$.  By the definitions of $\mathcal{F}$ and $\mathcal{O}_X$, we have $\mathcal{F}(D(\angles{f})) = \mathbb{S}_{\textrm{fg}}(A_f) = \mathbb{S}_{\textrm{fg}}(\mathcal{O}_X(D(f)))$.  Similarly, using the fact that restriction maps in $\mathcal{O}_X$ are just localizations and the description of the restriction maps in $\mathcal{F}$ from the proof of Lemma \ref{lemma: structure sheaf of Y}, we immediately get the commutativity of \eqref{eq: restriction compatibility of spectrum of submodules}.
	\end{proof}

	\begin{mythm}\label{theorem: invertible submodules}
		Let $A$ be a $K$-algebra which is a domain, and some point $\mathfrak{p}$ of $\Spec A$ has the residue field isomorphic to $K$ (as a $K$-algebra). Let $N\subseteq A$ be a finitely generated $\mathcal{O}_K$-submodule.  Suppose there is some $\mathcal{O}_K$-submodule $N'\subseteq A$ such that $NN'=\mathcal{O}_K$.  Then we have the following:
		\begin{enumerate}
			\item 
			$N$ is principal and generated by a unit of $A$.
			\item 
			If in addition to the above, $A$ is a labelled $K$-algebra and $N$ is a finitely generated monomial $\mathcal{O}_K$-submodule, then $N$ is generated by an element of $\mathbf{Mon}(A)^\times$.	
		\end{enumerate}	
	\end{mythm}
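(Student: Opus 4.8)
The plan is to deduce everything from a single ``evaluation'' homomorphism $\chi\colon A\to K$, which exists precisely because of the hypothesis on $\mathfrak p$: since $\kappa(\mathfrak p)\cong K$ as a $K$-algebra, the composite $A\to A_{\mathfrak p}\to\kappa(\mathfrak p)\xrightarrow{\ \sim\ }K$ is a $K$-algebra homomorphism, and being such it restricts to the identity on $K$ and hence on $\mathcal O_K$. The point of introducing $\chi$ is that it lets one ``separate variables'' in the relation witnessing invertibility of $N$.

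For (1): from $NN'=\mathcal O_K=\langle 1\rangle$ write $1=\sum_{i=1}^k n_in_i'$ with $n_i\in N$, $n_i'\in N'$ (coefficients from $\mathcal O_K$ absorbed into the $n_i'$). Each product $n_in_j'$ lies in $NN'=\mathcal O_K\subseteq K$, hence is fixed by $\chi$, so
\[
n_in_j'=\chi(n_in_j')=\chi(n_i)\chi(n_j')=a_ib_j,\qquad a_i:=\chi(n_i),\ b_j:=\chi(n_j')\in K .
\]
Substituting into $n_j=n_j\cdot 1=\sum_i(n_jn_i')n_i$ gives $n_j=a_j\,u$ for every $j$, with $u:=\sum_i b_in_i\in A$. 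Pick $j_0$ maximizing $\nu(a_{j_0})$; this forces $a_{j_0}\neq 0$, since $\sum_i a_ib_i=\sum_i n_in_i'=1\neq 0$. From $a_{j_0}n_j=a_jn_{j_0}$ and $a_{j_0}^{-1}\in K^\times\subseteq A^\times$ we get $n_j=(a_j/a_{j_0})n_{j_0}$ with $a_j/a_{j_0}\in\mathcal O_K$, because $\nu(a_j)\le\nu(a_{j_0})$. Since $n=\sum_i(nn_i')n_i$ with $nn_i'\in\mathcal O_K$ for every $n\in N$, the $n_i$ generate $N$ over $\mathcal O_K$, so $N=\mathcal O_Kn_{j_0}=\langle n_{j_0}\rangle$ is principal. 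Lastly $n_{j_0}N'=NN'=\mathcal O_K\ni 1$, so $n_{j_0}v=1$ for some $v\in N'\subseteq A$, i.e.\ $n_{j_0}\in A^\times$.

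For (2), one reruns this argument starting from a monomial generating set: writing $N=\mathcal O_K\mu_1+\dots+\mathcal O_K\mu_s$ with $\mu_l\in\mathbf{Mon}(A)$ and expanding the terms of an expression $1=\sum x_iy_i\in NN'$ in terms of the $\mu_l$, one obtains $1=\sum_{l=1}^s\mu_lz_l$ with $z_l\in N'$; applying the computation above with $\mu_l$ in place of $n_i$ and $z_l$ in place of $n_i'$ yields $N=\langle\mu_{l_0}\rangle$ with $\mu_{l_0}\in A^\times$ for a suitable $l_0$, so $N$ is generated by a monomial that is a unit of $A$, as required. I expect the only genuinely substantive step to be the introduction of $\chi$ and the observation $n_in_j'=\chi(n_i)\chi(n_j')$; without a $K$-rational point one only gets that $N$ is a quotient of the image of the idempotent matrix $(n_in_j')\in M_k(\mathcal O_K)$, and pinning that image down to rank $1$ in arbitrary characteristic seems to require exactly such an evaluation. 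The remaining ingredients --- absorbing $\mathcal O_K$-coefficients, $K^\times\subseteq A^\times$, and $a/b\in\mathcal O_K\iff\nu(a)\le\nu(b)$ --- are routine.
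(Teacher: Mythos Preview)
Your proof is correct and rests on the same key device as the paper's: evaluation at the $K$-rational point $\mathfrak p$ (the paper writes $f(\mathfrak p)$ where you write $\chi(f)$). The organization differs somewhat. The paper first observes that every nonzero element of $N$ is a unit of $A$: picking $g\in N'$ nonzero, any nonzero $f\in N$ satisfies $fg\in\mathcal O_K\setminus\{0\}$ (using that $A$ is a domain), hence $f\in A^\times$. It then fixes a single reference element $f\in N$ (a monomial, for part (2)) and shows that every $g\in N$ has the form $g=cf$ with $c\in K$, by noting that $bg-af\in N$ vanishes at $\mathfrak p$ for suitable $a,b\in\mathcal O_K$, hence is a non-unit, hence is zero. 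The finite generating set $c_1f,\dots,c_nf$ is then reduced to a single generator by choosing the $c_i$ of largest valuation, just as you do.

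Your route via the rank-one factorization $n_in_j'=\chi(n_i)\chi(n_j')=a_ib_j$ and the identity $n_j=a_j u$ reaches the same conclusion without ever invoking the domain hypothesis; in fact your argument proves the theorem with that hypothesis dropped. The paper's ``every nonzero element of $N$ is a unit'' step is conceptually clean but genuinely uses integrality, whereas your matrix-style computation sidesteps it. So the two proofs share the essential idea but yours is slightly more general.
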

	\begin{proof}
		$(1)$: First, we claim that every nonzero element of $N$ is a unit of $A$.  Note $N'\neq 0$, so pick some nonzero $g\in N'$.  If $f\in N$ is nonzero, $fg\neq 0$ and $fg\in \mathcal{O}_K\subseteq K$.  So $fg$ is a unit and hence $f$ is a unit.
		
		For the remainder of the proof fix some nonzero $f\in N$ (this is possible since $N\neq 0$). For the proof of $(2)$, if $N$ is a finitely generated monomial $\mathcal{O}_K$-submodule of a labelled $K$ algebra $A$, then we will specifically choose $f$ to be a monomial, otherwise $f$ is arbitrary.
		
		Let $g\in N$.  Since $f$ is a unit, $f(\mathfrak{p})\neq 0$.  Let $c=\frac{g(\mathfrak{p})}{f(\mathfrak{p})}\in K$.  Write $c=\frac{a}{b}$ with $a,b\in \mathcal{O}_K$.  Then 
		\[
		bg(\mathfrak{p}) - af(\mathfrak{p}) = 0,
		\]
		and hence $bg - af$ is a non-unit.  But $bg - af\in N$, so $bg - af = 0$.  Then $g = cf$.
		
		Since $N$ is finitely generated and each element has the form $cf$ for $c\in K$, we may pick $c_1,\ldots, c_n$ such that $N$ is generated by $c_1f,\ldots, c_nf$.  Without loss of generality, we may assume that they are ordered in such a way that $c_1$ has maximal valuation, which implies that for each $i$, $\frac{c_i}{c_1}\in \mathcal{O}_K$.  This implies $c_i f$ is in the submodule generated by $c_1f$, so $N$ is generated by $c_1 f$.  Since $f$ is a unit and $c_1$ is in $K$ (and $N\neq 0$), $N$ is generated by a unit.  
		
		$(2)$: If we chose $f$ to be a monomial above, then $N$ is generated by a unit which is a monomial.
	\end{proof}
	
	\begin{lem}\label{lemma: naturality of principal submodules}
		Let $A$ be a $K$-algebra. Consider the map $\phi_A:A\rightarrow \mathbb{S}_\mathrm{fg}(A)$, where $\phi_A(a)=\angles{a}$, the $\mathcal{O}_K$-submodule generated by $a$. Then, $\phi_A$ is natural in $A$.  Consequently so is the induced map on units $A^\times\rightarrow (\mathbb{S}_\mathrm{fg}(A))^\times$.
	\end{lem}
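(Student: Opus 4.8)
The plan is to exhibit $\phi_A$ as a natural transformation between two functors on the category of $K$-algebras, both landing in the category of pointed multiplicative monoids (monoids with absorbing element). The target functor is $\mathbb{S}_\mathrm{fg}$, which I would first note is genuinely functorial: a $K$-algebra map $f:A\to B$ is in particular $\mathcal{O}_K$-linear, so it sends a finitely generated $\mathcal{O}_K$-submodule $N\subseteq A$ to the finitely generated $\mathcal{O}_K$-submodule $f(N)\subseteq B$ (generators go to generators), and this image assignment $\mathbb{S}_\mathrm{fg}(f)$ preserves $0$, $\mathcal{O}_K$, sums and products of submodules, exactly as in the localization case treated in Lemma \ref{lemma: localization of submodules}. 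The source functor is the forgetful functor sending a $K$-algebra to its underlying multiplicative monoid with zero; this is the right choice of codomain because $\phi_A$ need not respect addition, as the inclusion $\angles{a+b}\subseteq\angles{a}+\angles{b}$ can be strict.

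Granting this, I would check the two small claims. First, $\phi_A$ is a homomorphism of pointed multiplicative monoids: $\phi_A(0)=\angles{0}=0$, $\phi_A(1)=\angles{1}=\mathcal{O}_K$ is the multiplicative identity of $\mathbb{S}_\mathrm{fg}(A)$, and $\phi_A(ab)=\angles{ab}=\angles{a}\angles{b}=\phi_A(a)\phi_A(b)$. Second, for $f:A\to B$ the naturality square commutes: chasing $a\in A$ down-then-right yields $\phi_B(f(a))=\angles{f(a)}$, while right-then-down yields $\mathbb{S}_\mathrm{fg}(f)(\angles{a})=f(\angles{a})$, and these coincide since $f(\angles{a})=f(\mathcal{O}_K\, a)=\mathcal{O}_K\, f(a)=\angles{f(a)}$ by $\mathcal{O}_K$-linearity of $f$. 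This is precisely the identity $\mathbb{S}_\mathrm{fg}(f)\circ\phi_A=\phi_B\circ f$.

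For the final sentence, any homomorphism of monoids carries invertible elements to invertible elements (from $\phi_A(a)\phi_A(a^{-1})=\phi_A(1)=1$), so $\phi_A$ restricts to a map $A^\times\to(\mathbb{S}_\mathrm{fg}(A))^\times$; since $(-)^\times$ is a subfunctor of each forgetful functor and the square above restricts to units, this restricted map is again natural.

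The verification is entirely routine; the only points that need a moment's care are confirming that $\mathbb{S}_\mathrm{fg}$ is functorial on \emph{all} $K$-algebra homomorphisms (not merely the localization maps used earlier) and pinning down the ambient category as that of multiplicative monoids with zero, since $\phi_A$ is not a semiring homomorphism. I do not anticipate any genuine obstacle beyond these bookkeeping remarks.
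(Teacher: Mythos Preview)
Your proposal is correct and follows essentially the same approach as the paper: the paper's proof is the single-line computation $\phi_B(f(a)) = \angles{f(a)} = f_*(\angles{a}) = f_*(\phi_A(a))$, which is exactly your naturality-square check. You have simply added the surrounding bookkeeping (functoriality of $\mathbb{S}_\mathrm{fg}$, the ambient category of multiplicative monoids with zero, and the passage to units) that the paper leaves implicit.
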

	\begin{proof}Let $f:A\rightarrow B$ be a homomorphism of $K$-algebras.  Given $a\in A$, one has 
		\[ 
		\phi_B(f(a)) = \angles{f(a)} = f_*(\angles{a}) = f_*(\phi_A(a)),
		\]
		where $f_*$ is the map induced by $f$ under the functor $\mathbb{S}_\mathrm{fg}$.
	\end{proof}
	
	\begin{mythm}\label{theorem: picard isomorphism}
		Let $A$ be a finitely generated $K$-algebra which is a domain. Then one has an isomorphism
		\[
		\Pic(\Spec A) \simeq \Pic(\sSpec \mathbb{S}_\mathrm{fg}(A)). 
		\] 
	\end{mythm}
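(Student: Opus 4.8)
The plan is to present both sides as the first cohomology of the sheaf of units of the structure sheaf, and then to compare these two unit sheaves on the common underlying space. Write $X=\Spec A$ and $Y=\sSpec\mathbb{S}_\mathrm{fg}(A)$; since $Y$ is a semiringed space, Theorem \ref{theorem: bundle cohomology bijections} with $n=1$ gives $\Pic(X)=H^1(X,\mathcal{O}_X^\times)$ and $\Pic(Y)=H^1(Y,\mathcal{O}_Y^\times)$. Theorem \ref{theorem: topology of space of submodules} identifies $X$ and $Y$ as topological spaces, matching the basic opens $D(f)$ with $D(\angles{f})$, and Theorem \ref{theorem: sheaf of space of submodules} identifies $\mathcal{O}_Y$ with the sheaf whose value on $D(\angles{f})$ is $\mathbb{S}_\mathrm{fg}(A_f)$, compatibly with restriction. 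Passing to units, $\mathcal{O}_Y^\times$ becomes the abelian group sheaf with $\mathcal{O}_Y^\times(D(\angles{f}))=\mathbb{S}_\mathrm{fg}(A_f)^\times$, so (transporting along the homeomorphism) it remains to compare the two abelian group sheaves $\mathcal{O}_X^\times$ and $\mathcal{O}_Y^\times$ on $X$.

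Next I would produce a morphism $\Phi\colon\mathcal{O}_X^\times\to\mathcal{O}_Y^\times$. By Lemma \ref{lemma: naturality of principal submodules} the maps $A_f^\times\to\mathbb{S}_\mathrm{fg}(A_f)^\times$, $u\mapsto\angles{u}$, are natural in $A_f$; together with the restriction-compatibility of Theorem \ref{theorem: sheaf of space of submodules} they glue to such a $\Phi$. I claim $\Phi$ fits into a short exact sequence of abelian group sheaves
\[
1\longrightarrow\underline{\mathcal{O}_K^\times}\longrightarrow\mathcal{O}_X^\times\stackrel{\Phi}{\longrightarrow}\mathcal{O}_Y^\times\longrightarrow1 .
\]
For the kernel: $\angles{u}=\angles{1}$ in $\mathbb{S}_\mathrm{fg}(A_f)$ if and only if $u\in\mathcal{O}_K^\times$, so on the basis of principal opens $\ker\Phi$ is the constant presheaf with value $\mathcal{O}_K^\times$; since $A$ is a domain, $X$ is irreducible and this presheaf is already the constant sheaf $\underline{\mathcal{O}_K^\times}$. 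For surjectivity: locally (on stalks, identified via Lemma \ref{lemma: localization of submodules} together with the fact that finitely generated submodules already appear at finite level, or over a suitable affine cover) one must show that every invertible finitely generated $\mathcal{O}_K$-submodule of the relevant local ring $A_\mathfrak{p}$ is principal and generated by a unit, which is exactly Theorem \ref{theorem: invertible submodules}(1).

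Finally, since $A$ is a domain, $X$ is irreducible, so by Lemma \ref{lemma: cohomology vanishing for constant sheaf} and the remark following it (the coefficient group $\mathcal{O}_K^\times$ is abelian) we have $H^1(X,\underline{\mathcal{O}_K^\times})=H^2(X,\underline{\mathcal{O}_K^\times})=0$. The long exact cohomology sequence of the displayed short exact sequence then collapses to an isomorphism $H^1(X,\mathcal{O}_X^\times)\xrightarrow{\ \sim\ }H^1(X,\mathcal{O}_Y^\times)$, that is, $\Pic(\Spec A)\simeq\Pic(\sSpec\mathbb{S}_\mathrm{fg}(A))$, and it is natural in $A$ by the naturality of $\Phi$. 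I expect the main obstacle to be the surjectivity of $\Phi$: it rests entirely on Theorem \ref{theorem: invertible submodules}(1), so one must make sure its residue-field hypothesis is available on each local ring that occurs (or re-derive the elementary special case actually needed here, namely that $1\in NN'$ forces $N=\angles{u}$ for a unit $u$). A secondary technical point is verifying that $\ker\Phi$ really is the constant sheaf $\underline{\mathcal{O}_K^\times}$ on all of $Y$, and that the description of $\mathcal{O}_Y^\times$ from Theorem \ref{theorem: sheaf of space of submodules} propagates from the basis of principal opens to arbitrary opens; both follow from the irreducibility of $X$ and the compatibility of these assignments on a basis.
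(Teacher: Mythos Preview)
Your proposal is essentially the same as the paper's proof: both construct the map $u\mapsto\angles{u}$ from $\mathcal{O}_X^\times$ to $\mathcal{O}_Y^\times$, identify its kernel with the constant sheaf $\underline{\mathcal{O}_K^\times}$, invoke Theorem~\ref{theorem: invertible submodules} for surjectivity, and conclude via the long exact sequence together with the acyclicity of constant sheaves on an irreducible space.

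The one point worth noting is precisely the concern you flag. The paper does \emph{not} verify surjectivity on stalks $A_\mathfrak{p}$; instead it checks that $\phi_f:A_f^\times\to\mathbb{S}_\mathrm{fg}(A_f)^\times$ is surjective on sections over every basic open $D(f)$. The reason is exactly the residue-field hypothesis of Theorem~\ref{theorem: invertible submodules}: for a non-closed prime $\mathfrak{p}$, no point of $\Spec A_\mathfrak{p}$ need have residue field $K$, so the theorem cannot be applied to $A_\mathfrak{p}$ directly. By contrast, $A_f$ is again a finitely generated $K$-algebra and a domain, so the weak Nullstellensatz furnishes a closed point of $\Spec A_f$ with residue field $K$, and Theorem~\ref{theorem: invertible submodules} applies. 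Since surjectivity on sections over a basis already gives a sheaf epimorphism, this suffices. Your alternative ``over a suitable affine cover'' is the right instinct; just make that the primary argument rather than the stalkwise one.
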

	\begin{proof}
		If $f\in A$ is nonzero, then $A_f$ is also a domain and is a reduced finitely generated $K$-algebra.  By the weak Nullstellensatz, the residue field at any closed point (in particular at some closed point) of $A$ is $K$.  Thus $A_f$ satisfies the conditions of Theorem \ref{theorem: invertible submodules}.
		
		We will let $X=\Spec A$ and use Theorem \ref{theorem: topology of space of submodules} to identify $Y=\sSpec \mathbb{S}_\mathrm{fg}(A)$ with $X$ as topological spaces.  
		
		Our first step is to construct an epimorphism of sheaves $\eta: \mathcal{O}_X^\times \rightarrow \mathcal{O}_Y^\times$.  We start by defining $\eta$ on basic open sets.  For $U=D(f) \subseteq X$, we have a homomorphism 
		\[
		\phi_f : \mathcal{O}_X^\times(U) \rightarrow \mathbb{S}_\mathrm{fg}(\mathcal{O}_X(U))^\times
		\]
		mapping $a\in \mathcal{O}_X^\times(U)$ to the submodule $\phi_f(a)=\angles{a}$ it generates.  Theorem \ref{theorem: invertible submodules} tells us that $\phi_f$ is an epimorphism. By Theorem \ref{theorem: sheaf of space of submodules}, we have an isomorphism
		\[
		\tau_f: \mathbb{S}_\mathrm{fg}(\mathcal{O}_X(U))^\times\rightarrow \mathcal{O}_Y^\times(D(\angles{f})).
		\]
		We define $\eta_{D(f)} = \tau_f \circ \phi_f$.  Lemma \ref{lemma: naturality of principal submodules} tells us the maps $\phi_f$ are compatible with restriction from $D(f)$ to a smaller open set $D(g)$, while Theorem \ref{theorem: sheaf of space of submodules} tells us the same thing for the maps $\tau_f$.  Thus we may conclude the same for $\eta$.  By \cite[Lemma 6.30.10]{stacks-project}, our definition of $\eta$ extends uniquely to a morphism of sheaves.  Furthermore, the fact that $\eta$ induces an epimorphism of sections over any basic open set implies it is a sheaf-theoretic epimorphism.
		
		Next we compute the kernel of $\eta$.  Note that $\ker \eta_{D(f)} = \ker \phi_f$.  Let $x \in \ker \phi_f \subseteq \mathcal{O}_X^\times(D(f)) = A_f^\times$.  Then $\angles{x} = \angles{1}$.  Note that since $A$ is a $K$-algebra, the embedding $\mathcal{O}_K\rightarrow A$ is injective and we can identify $\angles{1}$ with $\mathcal{O}_K$.  The above can be written $x\mathcal{O}_K = \mathcal{O}_K$, which implies $x\in \mathcal{O}_K^\times$.  Conversely, if $x\in \mathcal{O}_K^\times$, then $\angles{x}=\angles{1}$ so $x\in\ker\phi_f$.  Thus $\ker\eta_{D(f)} = \mathcal{O}_K^\times$.  This defines an isomorphism of sheaves on our basis of open sets, so by \cite[Lemma 6.30.10]{stacks-project}, $\ker\eta$ is the constant sheaf $\underline{\mathcal{O}_K^\times}$.
		
		We thus have an exact sequence
		\[ 0 \rightarrow \underline{\mathcal{O}_K^\times} \rightarrow \mathcal{O}_X^\times \rightarrow \mathcal{O}_Y^\times \rightarrow 0 \]
		Since $X$ is irreducible, constant sheaves such as $\underline{\mathcal{O}_K^\times}$ are acyclic.  The long exact sequence and the homeomorphism $X\cong Y$ give us the following isomorphism: 
		\[
		H^1(X, \mathcal{O}_X^\times)\cong H^1(X, \mathcal{O}_Y^\times) = H^1(Y, \mathcal{O}_Y^\times),
		\]
		and this is the desired isomorphism of Picard groups.
	\end{proof}
	
	Now, from Corollary \ref{corollary: tropical interpretation of monomial submodules} and Example \ref{eg: canonical labelling}, we obtain the following.

	\begin{cor}
		Let $A$ be a $K$-algebra, viewed as a labelled $K$-algebra with a canonical labelled algebra structure in Example \ref{eg: canonical labelling}. Then, one has 
		\[
		\Pic(\Spec A) \simeq \Pic(\sSpec \mathrm{Trop}(A)),
		\]	
		where $ \mathrm{Trop}(A)$ is the coordinate semiring of the scheme-theoretic tropicalization of $\Spec A$ with respect to the canonical labelled algebra structure. 
	\end{cor}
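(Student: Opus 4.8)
The plan is to deduce the corollary formally from Theorem \ref{theorem: picard isomorphism} together with the identification of $\mathrm{Trop}(A)$ with the semiring of finitely generated $\mathcal{O}_K$-submodules of $A$. Throughout, I keep in force the standing hypotheses of Theorem \ref{theorem: picard isomorphism}: that $A$ is a finitely generated $K$-algebra which is a domain; the statement should be read with these assumptions. First I would observe that, under the canonical labelled $K$-algebra structure of Example \ref{eg: canonical labelling}, every element of $A$ is a monomial, so a finitely generated monomial $\mathcal{O}_K$-submodule of $A$ is simply a finitely generated $\mathcal{O}_K$-submodule; that is, $\mathbb{S}_{\mathrm{fgmon}}(A) = \mathbb{S}_{\mathrm{fg}}(A)$ on the nose. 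Corollary \ref{corollary: tropical interpretation of monomial submodules} then gives a $\mathbb{T}$-algebra isomorphism $\mathrm{Trop}(A) \simeq \mathbb{S}_{\mathrm{fgmon}}(A) = \mathbb{S}_{\mathrm{fg}}(A)$.

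Next I would upgrade this isomorphism of semirings to an isomorphism of semiringed spaces $\sSpec \mathrm{Trop}(A) \cong \sSpec \mathbb{S}_{\mathrm{fg}}(A)$, using the functoriality of the saturated spectrum established earlier. Since the Picard group of a semiringed space is an isomorphism invariant, this yields $\Pic(\sSpec \mathrm{Trop}(A)) \simeq \Pic(\sSpec \mathbb{S}_{\mathrm{fg}}(A))$. Finally, applying Theorem \ref{theorem: picard isomorphism} gives $\Pic(\Spec A) \simeq \Pic(\sSpec \mathbb{S}_{\mathrm{fg}}(A))$, and composing the two isomorphisms produces the desired $\Pic(\Spec A) \simeq \Pic(\sSpec \mathrm{Trop}(A))$.

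There is essentially no serious obstacle here, as the statement is a formal corollary; the most delicate point worth spelling out is that the isomorphism $\mathrm{Trop}(A)\simeq\mathbb{S}_{\mathrm{fg}}(A)$ must be respected by the whole semiring structure (not merely the underlying $\mathbb{T}$-module or set), so that it induces an isomorphism of the associated semiringed spaces and hence of Picard groups. This is exactly what Corollary \ref{corollary: tropical interpretation of monomial submodules} provides, since it asserts the isomorphism is one of $\mathbb{T}$-algebras, and a $\mathbb{T}$-algebra isomorphism is in particular a semiring isomorphism.
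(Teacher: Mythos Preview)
Your proposal is correct and follows essentially the same approach as the paper: the paper simply cites Corollary \ref{corollary: tropical interpretation of monomial submodules} and Example \ref{eg: canonical labelling} to identify $\mathrm{Trop}(A)\simeq\mathbb{S}_{\mathrm{fgmon}}(A)=\mathbb{S}_{\mathrm{fg}}(A)$ and then invokes Theorem \ref{theorem: picard isomorphism}. Your additional remark that the hypotheses of Theorem \ref{theorem: picard isomorphism} (finitely generated domain) must be in force is well taken, as the corollary statement omits them.
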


	For a semiringed space $(X,\mathcal{O}_X)$, and $\mathcal{O}_X$-modules $\mathcal{F}$ and $\mathcal{G}$, one can define the tensor product $\mathcal{F}\otimes_{\mathcal{O}_X}\mathcal{G}$ as in the classical case. One can further use the same arguments as in the ring case to see that many properties hold in this case as well (for instance, see \cite[\S 3]{jun2017vcech}).
	
	\begin{mydef}\label{definition: $O_X$-module}
		Let $(X, \mathcal{O}_X)$ and $(Y, \mathcal{O}_Y)$ be semiringed spaces.  Given a morphism of semiringed spaces $\phi: X \rightarrow Y$ and a sheaf $\mathcal{F}$ of $\mathcal{O}_Y$-modules, let $\phi^*\mathcal{F}$ be the sheaf of $\mathcal{O}_X$-modules given by 
		\[ 
		\phi^*\mathcal{F} = \phi^{-1}\mathcal{F} \otimes_{\phi^{-1}\mathcal{O}_Y} \mathcal{O}_X 
		\]
	\end{mydef}
	
	One may apply a similar argument as in the ring case to see that Definition \ref{definition: $O_X$-module} is functorial.

	\begin{lem}
		Let $f: X\rightarrow Y$ be a morphism of semiringed spaces and $\mathcal{L}$ be a line bundle on $Y$.  Then $f^* \mathcal{L}$ is a line bundle on $X$.
	\end{lem}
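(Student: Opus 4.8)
The plan is to reduce the assertion to the case of a trivial line bundle by restricting to a well-chosen open subset, and then to exploit that $f^{*}$ is a functor which sends $\mathcal{O}_Y$ to $\mathcal{O}_X$. First I would fix a point $x \in X$, put $y = f(x)$, and choose an open neighborhood $V$ of $y$ together with an isomorphism $\mathcal{L}|_{V} \simeq \mathcal{O}_Y|_{V}$ of $\mathcal{O}_Y|_V$-modules, which exists because $\mathcal{L}$ is a line bundle. Setting $U = f^{-1}(V)$ --- an open neighborhood of $x$ since $f$ is continuous --- and writing $g = f|_U \colon U \to V$ for the induced morphism of semiringed spaces, it suffices to produce an isomorphism $(f^{*}\mathcal{L})|_{U} \simeq \mathcal{O}_X|_{U}$.

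The main point is the identification $(f^{*}\mathcal{L})|_{U} \simeq g^{*}(\mathcal{L}|_{V})$. This holds because the inverse-image functor commutes with restriction to open subsets, so $(f^{-1}\mathcal{L})|_{U} = g^{-1}(\mathcal{L}|_{V})$ and likewise $(f^{-1}\mathcal{O}_Y)|_U = g^{-1}(\mathcal{O}_Y|_V)$, and because the relative tensor product also commutes with restriction; combining these with the definition
\[
f^{*}\mathcal{L} = f^{-1}\mathcal{L} \otimes_{f^{-1}\mathcal{O}_Y} \mathcal{O}_X
\]
yields $(f^{*}\mathcal{L})|_{U} \simeq g^{-1}(\mathcal{L}|_V) \otimes_{g^{-1}(\mathcal{O}_Y|_V)} \mathcal{O}_X|_U = g^{*}(\mathcal{L}|_V)$. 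Both commutation facts are proved exactly as in the classical sheaf-theoretic setting, as already noted after Definition~\ref{definition: $O_X$-module}.

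Finally, since $g^{*}$ is functorial (again by Definition~\ref{definition: $O_X$-module}), the isomorphism $\mathcal{L}|_{V} \simeq \mathcal{O}_Y|_{V}$ gives $g^{*}(\mathcal{L}|_{V}) \simeq g^{*}(\mathcal{O}_Y|_{V})$, and $g^{*}(\mathcal{O}_Y|_{V}) = g^{-1}(\mathcal{O}_Y|_V) \otimes_{g^{-1}(\mathcal{O}_Y|_V)} \mathcal{O}_X|_U \simeq \mathcal{O}_X|_{U}$. Chaining the three isomorphisms shows $(f^{*}\mathcal{L})|_{U} \simeq \mathcal{O}_X|_{U}$, so $f^{*}\mathcal{L}$ is locally free of rank $1$, i.e. a line bundle on $X$. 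There is no serious obstacle here: everything is formal once one has the compatibility of $f^{*}$ with restriction to open sets, which is the only step that genuinely needs to be spelled out, and it transfers verbatim from the ring case.
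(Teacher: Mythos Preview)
Your proof is correct and follows essentially the same route as the paper's: reduce to the trivial bundle via a local trivialization, and use that $f^*$ is compatible with restriction to open subsets. The only cosmetic difference is that the paper packages the compatibility step as functoriality of pullback under composition (writing $(f^*\mathcal{L})|_{f^{-1}(U)} = j^* f^* \mathcal{L} = g^* i^* \mathcal{L}$ via the square $ig = fj$), whereas you argue it directly from the commutation of $f^{-1}$ and $\otimes$ with restriction; the content is identical.
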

	\begin{proof}We first consider the case of the trivial bundle $\mathcal{L} = \mathcal{O}_Y$.  We have 
		\[ f^* \mathcal{O}_Y = (f^{-1} \mathcal{O}_Y) \otimes_{(f^{-1} \mathcal{O}_Y)} \mathcal{O}_X = \mathcal{O}_X \]
		which gives the result in the case of the trivial bundle.
		
		To go further, we need to understand pullbacks along open sets.  If $Z$ is any topological space with an open subset $U\subseteq Z$ and if $i: U\rightarrow Z$ is the inclusion, then for every sheaf $\mathcal{F}$ of abelian semigroups on $Z$ we have $i^{-1} \mathcal{F} = \mathcal{F}\big|_U$.  Furthermore if $Z$ is a semiringed space, we may equip $U$ with the structure sheaf $\mathcal{O}_U = \mathcal{O}_Z \big|_U$.  Then for a sheaf $\mathcal{F}$ of $\mathcal{O}_Z$-modules we have
		\[ i^* \mathcal{F} = i^{-1} \mathcal{F} \otimes_{i^{-1} \mathcal{O}_Z} \mathcal{O}_U = i^{-1} \mathcal{F} = \mathcal{F}\big|_U. \]
		
		Now choose $U\subseteq Y$ such that $\mathcal{L}\big|_U \cong \mathcal{O}_Y\big|_U$.  Let $i: U\rightarrow Y$ and $j: f^{-1}(U) \rightarrow X$ be inclusion maps and let $g: f^{-1}(U)\rightarrow U$ be the map induced by $f$.  We regard $U$ and $f^{-1}(U)$ as semiringed spaces by restriction, as above. By construction, $ig=fj$. This gives
		\[ 
		(f^* \mathcal{L})\big|_{f^{-1}(U)} = j^*f^* \mathcal{L} = g^* i^* \mathcal{L} \cong g^* \mathcal{O}_U. \]
		
		In the special case $\mathcal{L} = \mathcal{O}_Y$, we have seen $f^* \mathcal{O}_Y \cong \mathcal{O}_X$.  Hence 
		\[
		\mathcal{O}_{f^{-1}(U)} = \mathcal{O}_X\big|_{f^{-1}(U)} \cong (f^* \mathcal{O}_Y)\big|_{f^{-1}(U)} \cong g^* \mathcal{O}_U. 
		\]
		Combining this with the more general case above, we have that if $\mathcal{L}$ is trivial on $U$, then 
		\[
		(f^* \mathcal{L})\big|_{f^{-1}(U)} \cong g^* \mathcal{O}_U \cong \mathcal{O}_{f^{-1}(U)}. 
		\]
		In other words, $f^* \mathcal{L}$ is trivial on $f^{-1}(U)$. Now if we pick a trivializing open cover $\{ U_i \}$ for a line bundle $\mathcal{L}$ then $\{f^{-1}(U_i)\}$ is a trivializing open cover for $f^* \mathcal{L}$.
	\end{proof}
	
	We are now ready to describe how to construct line bundles on a classical variety from tropical line bundles.  Recall from Corollary \ref{corollary: tropical interpretation of monomial submodules} that the tropicalization of $\Spec A$ is $\Spec \mathbb{S}_\mathrm{fgmon}(A)$ for a labelled $K$-algebra $A$.
	
	\begin{mydef}\label{definition: lifting def}
		Let $A$ be a labelled $K$-algebra which is finitely generated as an algebra and has no zero-divisors. 
		\begin{enumerate}
			\item 
			The \emph{saturated lifting map} is the map
			\[
			\tau_s: \Pic(\sSpec \mathbb{S}_\mathrm{fgmon}(A))\rightarrow \Pic(\Spec A)
			\]
			given by composing the isomorphism $\Pic(\sSpec \mathbb{S}_\mathrm{fg}(A))\cong \Pic(\Spec A)$ of Theorem \ref{theorem: picard isomorphism} with the pullback map
			\[
			\Pic(\sSpec \mathbb{S}_\mathrm{fgmon}(A))\rightarrow \Pic(\sSpec \mathbb{S}_\mathrm{fg}(A))
			\]
			induced by the inclusion $\mathbb{S}_\mathrm{fgmon}(A)\rightarrow \mathbb{S}_\mathrm{fg}(A)$.
			\item 
			The \emph{unsaturated lifting map} 
			\[
			\tau: \Pic(\Spec \mathbb{S}_\mathrm{fgmon}(A))\rightarrow \Pic(\Spec A)
			\]
			is given by composing $\tau_s$ with the pullback map
			\[
			\Pic(\Spec \mathbb{S}_\mathrm{fgmon}(A))\rightarrow \Pic(\sSpec \mathbb{S}_\mathrm{fgmon}(A))
			\]
			induced by the inclusion $\sSpec \mathbb{S}_\mathrm{fgmon}(A)\subseteq\Spec \mathbb{S}_\mathrm{fgmon}(A)$.
		\end{enumerate}
	\end{mydef}

	\begin{myeg}
		In the setting of Example \ref{eg: canonical labelling}, $\mathbb{S}_{\textrm{fgmon}}(A) = \mathbb{S}_{\textrm{fg}}(A)$. One may easily use this fact to check that in this situation, the saturated lifting map is an isomorphism.  
	\end{myeg}

	\begin{pro}\label{proposition: lifting pro}
		Let $A$ be a labelled $K$-algebra which is finitely generated as an algebra and has no zero-divisors.  Let $Y = \sSpec \mathbb{S}_\mathrm{fgmon}(A)$ and $Z = \Spec \mathbb{S}_\mathrm{fgmon}(A)$.  Suppose that for every line bundle $\mathcal{L}$ on $Y$ there is an open cover $\{U_i\}$ of $Z$ such that $\mathcal{L}$ is trivial on each $U_i \cap Y$.  Then the saturated and unsaturated lifting maps have the same image.
	\end{pro}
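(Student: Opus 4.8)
The plan is to reduce the proposition to the surjectivity of a single map. Write $S=\mathbb{S}_\mathrm{fgmon}(A)$, $Y=\sSpec S$, $Z=\Spec S$, and let $j\colon Y\hookrightarrow Z$ be the inclusion, with induced pullback $j^{*}\colon\Pic(Z)\to\Pic(Y)$. By Definition \ref{definition: lifting def} one has $\tau=\tau_{s}\circ j^{*}$, so $\Img(\tau)=\tau_{s}(j^{*}(\Pic(Z)))\subseteq\tau_{s}(\Pic(Y))=\Img(\tau_{s})$ automatically, and equality holds as soon as $j^{*}$ is surjective. Thus it suffices to show: under the stated hypothesis, every line bundle on $Y$ is the pullback along $j$ of a line bundle on $Z$.

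The first ingredient I would establish is a lemma computing the structure sheaf of a saturated spectrum on basic opens: for any semiring $T$ and any $h\in T$, the canonical map $T_{h}=\mathcal{O}_{\Spec T}(D(h))\to\mathcal{O}_{\sSpec T}(D(h)\cap\sSpec T)$ is an isomorphism, naturally in $h$; consequently $\mathcal{O}_{\sSpec T}^{\times}(D(h)\cap\sSpec T)=T_{h}^{\times}=\mathcal{O}_{\Spec T}^{\times}(D(h))$. The engine of this lemma is the observation $(\star)$: \emph{every nonempty closed subset of $\Spec T$ contains a saturated prime}. To see $(\star)$, note that if $V(I)\neq\emptyset$ then $T/I\neq 0$; since $\{0\}$ is a proper saturated ideal of any nonzero semiring, Zorn's lemma gives a maximal saturated ideal of $T/I$, which is prime by Lemma \ref{lemma: subtractive maximal no zero-divisor} (applied to the quotient of $T/I$ by that ideal), and whose preimage in $T$ is a saturated prime containing $I$. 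Two consequences: first, any open subset of $\Spec T$ containing $D(h)\cap\sSpec T$ already contains $D(h)$ (its complement inside $D(h)$ is closed in $\Spec T_{h}$ and disjoint from $\sSpec T_{h}$, hence empty), so in the colimit defining the inverse-image presheaf $j^{-1}\mathcal{O}_{\Spec T}$ at $D(h)\cap\sSpec T$ the term $\mathcal{O}_{\Spec T}(D(h))=T_{h}$ is terminal; second, basic-open covers of $D(h)\cap\sSpec T$ in $\sSpec T$ come from basic-open covers of $D(h)$ in $\Spec T$, so that presheaf is already a sheaf on the basis of basic opens, hence equals its sheafification there. Together these give $\mathcal{O}_{\sSpec T}(D(h)\cap\sSpec T)=T_{h}$; the identification of units is the same, since $s\in T_{h}$ is a unit in $\mathcal{O}_{\sSpec T}$ over $D(h)\cap\sSpec T$ iff it lies in no saturated prime of $D(h)$, iff by $(\star)$ for $T_{h}$ it lies in no prime of $D(h)$, iff it is a unit of $T_{h}$. (This lemma is close in spirit to Theorem \ref{theorem: sheaf of space of submodules} and may be citable from \cite{jun2020lattices}.)

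Granting the lemma, I would deduce surjectivity of $j^{*}$ by a \v{C}ech-cocycle lift. Let $\mathcal{L}\in\Pic(Y)$. By hypothesis there is an open cover $\{U_{i}\}$ of $Z$ with $\mathcal{L}|_{U_{i}\cap Y}$ trivial; covering each $U_{i}$ by basic opens of $Z$, I may assume the cover is $\{D(f_{i})\}$ with $f_{i}\in S$, and $\mathcal{L}$ is still trivial on every $D(f_{i})\cap Y$. Choosing trivializations produces, exactly as in the proof of Theorem \ref{theorem: bundle cohomology bijections}, a \v{C}ech $1$-cocycle $(g_{ij})$ with $g_{ij}\in\mathcal{O}_{Y}^{\times}\big((D(f_{i})\cap D(f_{j}))\cap Y\big)=\mathcal{O}_{Y}^{\times}(D(f_{i}f_{j})\cap Y)$ representing $[\mathcal{L}]\in H^{1}(Y,\mathcal{O}_{Y}^{\times})=\Pic(Y)$. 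By the lemma, $\mathcal{O}_{Y}^{\times}(D(f_{i}f_{j})\cap Y)=\mathcal{O}_{Z}^{\times}(D(f_{i}f_{j}))$, and likewise over triple overlaps $D(f_{i}f_{j}f_{k})$; hence $(g_{ij})$ is simultaneously a \v{C}ech $1$-cocycle for the cover $\{D(f_{i})\}$ of $Z$ with values in $\mathcal{O}_{Z}^{\times}$, and so defines a line bundle $\mathcal{M}$ on $Z$ with $[\mathcal{M}]\in H^{1}(Z,\mathcal{O}_{Z}^{\times})=\Pic(Z)$. Because the identifications in the lemma are compatible with restriction maps, $j^{*}\mathcal{M}$ is represented by the same cocycle $(g_{ij})$ on the cover $\{D(f_{i})\cap Y\}$, so $j^{*}\mathcal{M}=\mathcal{L}$. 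Therefore $j^{*}$ is surjective, and $\Img(\tau)=\tau_{s}(\Pic(Y))=\Img(\tau_{s})$.

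The hard part will be the structure-sheaf lemma of the second paragraph — pinning down $\mathcal{O}_{\sSpec T}$ on basic opens and checking the inverse-image/sheafification bookkeeping; the observation $(\star)$ that $\sSpec T$ meets every nonempty closed subset of $\Spec T$ is its crucial (and, given Lemma \ref{lemma: subtractive maximal no zero-divisor}, easy) input. Everything else is formal: the reduction to surjectivity of $j^{*}$, and the \v{C}ech-cocycle lift, for which the role of the hypothesis is precisely to furnish a trivializing cover of $\mathcal{L}$ pulled back from $Z$.
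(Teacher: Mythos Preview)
Your overall architecture matches the paper's: both reduce to the surjectivity of $j^{*}\colon\Pic(Z)\to\Pic(Y)$, and both hinge on the identification of sections of $\mathcal{O}_Y$ and $\mathcal{O}_Z$ over basic opens (equivalently, $j_*\mathcal{O}_Y\cong\mathcal{O}_Z$). Your \v{C}ech-cocycle lift in the third paragraph is a legitimate variant of the paper's push--pull argument (the paper shows $j_*\mathcal{L}$ is a line bundle and then applies Lemma~\ref{lemma: pullback of pushforward} to get $j^{*}j_*\mathcal{L}\cong\mathcal{L}$); either route works once the structure-sheaf identification is in hand.

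The genuine gap is in your proof of the structure-sheaf lemma, specifically observation $(\star)$. It is \emph{false} for general (even idempotent) semirings. Take $T=\mathbb{B}[x]$ and $I=(1+x)$. Then $1+x$ is a nonunit, so $I$ is proper and $V(I)\neq\emptyset$. But any saturated (i.e.\ downward-closed) prime $\mathfrak{q}$ containing $1+x$ would contain $1$ (since $1\leq 1+x$), which is impossible; hence $V(I)\cap\sSpec T=\emptyset$. Your justification of $(\star)$ breaks at the step ``$V(I)\neq\emptyset\Rightarrow T/I\neq 0$'': in this example the congruence generated by $1+x\sim 0$ forces $1+x\sim 1$ (idempotence) and hence $1\sim 0$, so $T/I=0$ even though $V(I)\neq\emptyset$. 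Consequently your colimit argument (``any open containing $D(h)\cap Y$ already contains $D(h)$'') fails --- indeed for $T=\mathbb{B}[x]$ one has $Y\subseteq D(1+x)\subsetneq Z$.

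The paper avoids $(\star)$ entirely. Instead of arguing from the topology of $j$, it invokes the method of Theorem~\ref{theorem: sheaf of space of submodules}: exhibit a concrete sheaf $\mathcal{F}$ on $Y$ with $j_*\mathcal{F}\cong\mathcal{O}_Z$ verified directly on basic opens, and then use Lemma~\ref{lemma: pullback of pushforward} to conclude $\mathcal{O}_Y=j^{-1}\mathcal{O}_Z\cong j^{-1}j_*\mathcal{F}\cong\mathcal{F}$. If you want to salvage your approach, you should replace the general $(\star)$ by this concrete route (or restrict $(\star)$ to a class of semirings where it can actually be verified and check that $\mathbb{S}_{\mathrm{fgmon}}(A)$ belongs to it).
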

	\begin{proof}Let $j: \sSpec \mathbb{S}_\mathrm{fgmon}(A)\rightarrow\Spec \mathbb{S}_\mathrm{fgmon}(A)$ be the inclusion map.  Then $\tau = \tau_s \circ j^*$.  It follows that it suffices to show $j^*$ is surjective. Let $\mathcal{L}$ be a line bundle on $Y$.
		
		We claim that $j_* \mathcal{L}$ is a line bundle on $Z$.  In the proof of Theorem \ref{theorem: sheaf of space of submodules}, we had isomorphisms $j_*(\mathcal{F})\cong \mathcal{O}_Z$ and $\mathcal{F}\cong \mathcal{O}_Y$.  Together, these give an isomorphism $j_*(\mathcal{O}_Y) \cong \mathcal{O}_Z$, which covers the case of the trivial bundle.  
		
		Let $U\subseteq Z$ be an open set such that $\mathcal{L}$ is trivial on $j^{-1}(U)$.  Let $V\subseteq U$ be open.  We have
		\[ 
		(j_* \mathcal{L})(V) = \mathcal{L}(j^{-1}(V)) \cong \mathcal{O}_Y(j^{-1}(V)) = (j_* \mathcal{O}_Y)(V) = \mathcal{O}_Z(V). 
		\]
		
		It is routine to check that this sequence of isomorphisms is compatible with restriction maps, so $(j^* \mathcal{L})\big|_U = \mathcal{O}_Z\big|_U$.  In particular, $j_* \mathcal{L}$ is trivial on $U$.

		Choose an open cover $\{U_i\mid i\in\Gamma\}$ for some indexing set $\Gamma$ of $Z$ such that $\mathcal{L}$ is trivial on each set $j^{-1}(U_i) = U_i \cap Z$.  The above shows $j_*(\mathcal{L})$ is trivial on each $U_i$, establishing the claim.  Now we have
		\[ 
		j^*(j_*(\mathcal{L})) = j^{-1}(j_*(\mathcal{L})) \otimes_{j^{-1}\mathcal{O}_Z} \mathcal{O}_Y \cong j^{-1}(j_*(\mathcal{L})) \cong \mathcal{L}. 
		\]
		
		This shows $j^*$ is a surjection on Picard groups.
	\end{proof}

	\bibliography{Vectorbundle}\bibliographystyle{alpha}

\begin{thebibliography}{CHWW15}

\bibitem[All12]{allermann2012chern}
Lars Allermann.
\newblock Chern classes of tropical vector bundles.
\newblock {\em Arkiv f{\"o}r matematik}, 50(2):237--258, 2012.

\bibitem[BBBK19]{Bambozzi}
F.~Bambozzi, O.~Ben-Bassat, and K.~Kremnizer.
\newblock Analytic geometry over $\mathbb{F}_1$ and the {F}argues-{F}ontaine
  curve.
\newblock {\em Advances in Mathematics}, 356:106--815, 2019.

\bibitem[BHS11]{bothmer2011vector}
H-C Bothmer, L~Hinsch, and U~Stuhler.
\newblock Vector bundles over projective spaces. the case $\mathbb{F}_1$.
\newblock {\em Archiv der Mathematik}, 96(3):227--234, 2011.

\bibitem[CC11]{con2}
Alain Connes and Caterina Consani.
\newblock On the notion of geometry over $\mathbb{F}_1$.
\newblock {\em Journal of Algebraic Geometry}, 20(3):525--557, 2011.

\bibitem[CC14]{connes2014arithmetic}
Alain Connes and Caterina Consani.
\newblock The arithmetic site.
\newblock {\em Comptes Rendus Mathematique}, 352(12):971--975, 2014.

\bibitem[CC17]{connes2017geometry}
Alain Connes and Caterina Consani.
\newblock Geometry of the scaling site.
\newblock {\em Selecta Mathematica}, 23(3):1803--1850, 2017.

\bibitem[CDPR12]{cools2012tropical}
Filip Cools, Jan Draisma, Sam Payne, and Elina Robeva.
\newblock A tropical proof of the brill--noether theorem.
\newblock {\em Advances in Mathematics}, 230(2):759--776, 2012.

\bibitem[CHWW15]{cortinas2015toric}
Guillermo Corti{\~n}as, Christian Haesemeyer, Mark~E Walker, and Charles
  Weibel.
\newblock Toric varieties, monoid schemes and cdh descent.
\newblock {\em Journal f{\"u}r die reine und angewandte Mathematik (Crelles
  Journal)}, 2015(698):1--54, 2015.

\bibitem[Dei05]{deitmar}
Anton Deitmar.
\newblock Schemes over $\mathbb{F}_1$.
\newblock In {\em Number fields and function fields-two parallel worlds}, pages
  87--100. Springer, 2005.

\bibitem[Dei08]{deitmar2008f1}
Anton Deitmar.
\newblock $\mathbb{F}_1$-schemes and toric varieties.
\newblock {\em Contributions to Algebra and Geometry}, 49(2):517--525, 2008.

\bibitem[FW14]{flores2014picard}
J.~Flores and C.~Weibel.
\newblock Picard groups and class groups of monoid schemes.
\newblock {\em Journal of Algebra}, 415:247--263, 2014.

\bibitem[GG14]{giansiracusa2014universal}
Jeffrey Giansiracusa and Noah Giansiracusa.
\newblock The universal tropicalization and the {B}erkovich analytification.
\newblock {\em arXiv preprint arXiv:1410.4348}, 2014.

\bibitem[GG16]{giansiracusa2016equations}
Jeffrey Giansiracusa and Noah Giansiracusa.
\newblock Equations of tropical varieties.
\newblock {\em Duke Mathematical Journal}, 165(18):3379--3433, 2016.

\bibitem[GG18]{giansiracusa2018grassmann}
Jeffrey Giansiracusa and Noah Giansiracusa.
\newblock A grassmann algebra for matroids.
\newblock {\em manuscripta mathematica}, 156(1-2):187--213, 2018.

\bibitem[Gro58]{grothendieck1958general}
Alexandre Grothendieck.
\newblock {\em A general theory of fibre spaces with structure sheaf}.
\newblock Number~4. University of Kansas, Department of Mathematics, 1958.

\bibitem[Hoc69]{hochster1969prime}
Melvin Hochster.
\newblock Prime ideal structure in commutative rings.
\newblock {\em Transactions of the American Mathematical Society}, 142:43--60,
  1969.

\bibitem[IKR18]{izhakian2018decompositions}
Zur Izhakian, Manfred Knebusch, and Louis Rowen.
\newblock Decompositions of modules lacking zero sums.
\newblock {\em Israel Journal of Mathematics}, 225(2):503--524, 2018.

\bibitem[JMR19]{jun2019projective}
Jaiung Jun, Kalina Mincheva, and Louis Rowen.
\newblock Projective systemic modules.
\newblock {\em Journal of Pure and Applied Algebra}, page 106243, 2019.

\bibitem[JMR20]{jun2020homology}
Jaiung Jun, Kalina Mincheva, and Louis Rowen.
\newblock Homology of systemic modules.
\newblock {\em arXiv preprint arXiv:2005.12760}, 2020.

\bibitem[JMT19]{jun2019picard}
Jaiung Jun, Kalina Mincheva, and Jeffrey Tolliver.
\newblock Picard groups for tropical toric schemes.
\newblock {\em manuscripta mathematica}, 160(3-4):339--357, 2019.

\bibitem[JRT20]{jun2020lattices}
Jaiung Jun, Samarpita Ray, and Jeffrey Tolliver.
\newblock Lattices, spectral spaces, and closure operations on idempotent
  semirings.
\newblock {\em arXiv preprint arXiv:2001.00808}, 2020.

\bibitem[Jun17a]{jun2017vcech}
Jaiung Jun.
\newblock {\v{C}}ech cohomology of semiring schemes.
\newblock {\em Journal of Algebra}, 483:306--328, 2017.

\bibitem[Jun17b]{jun2017geometry}
Jaiung Jun.
\newblock Geometry of hyperfields.
\newblock {\em arXiv preprint arXiv:1707.09348}, 2017.

\bibitem[Jun18]{jun2018valuations}
Jaiung Jun.
\newblock Valuations of semirings.
\newblock {\em Journal of Pure and Applied Algebra}, 222(8):2063--2088, 2018.

\bibitem[Lor12]{oliver1}
Oliver Lorscheid.
\newblock The geometry of blueprints: Part{ I}: Algebraic background and scheme
  theory.
\newblock {\em Advances in Mathematics}, 229(3):1804--1846, 2012.

\bibitem[Lor15]{lorscheid2015scheme}
Oliver Lorscheid.
\newblock Scheme theoretic tropicalization.
\newblock {\em arXiv preprint arXiv:1508.07949}, 2015.

\bibitem[Lor18]{oliver2}
Oliver Lorscheid.
\newblock The geometry of blueprints part {II}: {T}its--{W}eyl models of
  algebraic groups.
\newblock {\em Forum of Mathematics, Sigma}, 6, 2018.

\bibitem[Mac13]{macpherson2013skeleta}
Andrew~W Macpherson.
\newblock Skeleta in non-archimedean and tropical geometry.
\newblock {\em arXiv preprint arXiv:1311.0502}, 2013.

\bibitem[Man95]{manin1995lectures}
Yuri Manin.
\newblock Lectures on zeta functions and motives (according to {D}eninger and
  {K}urokawa).
\newblock {\em Ast{\'e}risque}, 228(4):121--163, 1995.

\bibitem[Mik05]{mikhalkin2005enumerative}
Grigory Mikhalkin.
\newblock Enumerative tropical algebraic geometry in $\mathbb{R}^2$.
\newblock {\em Journal of the American Mathematical Society}, 18(2):313--377,
  2005.

\bibitem[MR18]{maclagan2016tropical}
Diane Maclagan and Felipe Rinc{\'o}n.
\newblock Tropical ideals.
\newblock {\em Compositio Mathematica}, 154(3):640--670, 2018.

\bibitem[Pir15]{pirashvili2015cohomology}
Ilia Pirashvili.
\newblock On cohomology and vector bundles over monoid schemes.
\newblock {\em Journal of Algebra}, 435:33--51, 2015.

\bibitem[Sou04]{soule2004varietes}
Chr Soul{\'e}.
\newblock Les vari{\'e}t{\'e}s sur le corps {\`a} un {\'e}l{\'e}ment.
\newblock {\em Mosc. Math. J}, 4(1):217--244, 2004.

\bibitem[{Sta}20]{stacks-project}
The {Stacks project authors}.
\newblock The stacks project.
\newblock \url{https://stacks.math.columbia.edu}, 2020.

\bibitem[Tak10]{takagi2010construction}
Satoshi Takagi.
\newblock Construction of schemes over $\mathbb{F}_1$, and over idempotent
  semirings: towards tropical geometry.
\newblock {\em arXiv preprint arXiv:1009.0121}, 2010.

\bibitem[Tit56]{tits1956analogues}
Jacques Tits.
\newblock Sur les analogues alg{\'e}briques des groupes semi-simples complexes.
\newblock In {\em Colloque d’alg\`ebre sup{\'e}rieure, Bruxelles}, pages
  261--289, 1956.

\bibitem[Tol16]{tolliver2016extension}
Jeffrey Tolliver.
\newblock Extension of valuations in characteristic one.
\newblock {\em arXiv preprint arXiv:1605.06425}, 2016.

\end{thebibliography}

\end{document}